\documentclass{jT}

\usepackage{amssymb,amsmath,latexsym,amsthm}
\usepackage[english]{babel}

\newtheorem{theorem}{Theorem}[section]
\newtheorem{lemma}{Lemma}[section]
\newtheorem{proposition}{Proposition}[section]
\newtheorem{corollary}{Corollary}[section]

\theoremstyle{definition}
\newtheorem*{definition}{Definition}

\numberwithin{equation}{section}

%
%
\def\as#1{\renewcommand\arraystretch{#1}}
\def\gg{\mathcal{G}r}
\def\ggm{\mathcal{G}r(\mu)}
\def\ggmp{\mathcal{G}r(\mu')}
\def\hh{{\mathcal H}}
\def\hm{H_\mu}
\def\hmp{H_{\mu'}}
\def\imp{\,\Longrightarrow\,}
\def\iso{\ \lower.3ex\hbox{\as{.08}$\begin{array}{c}\lra\\\mbox{\tiny $\sim\,$}\end{array}$}\ }
\def\kb{\overline{K}_v}
\def\kpm{\op{KP}(\mu)}
\def\la{\lambda}
\def\ll{\mathcal{L}}
\def\lra{\longrightarrow}
\def\m{{\mathfrak m}}
\def\md#1{\; \mbox{\rm(mod }{#1})}
\def\mmu{\mid_\mu}
\def\mn{\op{Min}}
\def\mx{\op{Max}}
\def\oo{\mathcal{O}}
\def\op{\operatorname}
\def\ord{\op{ord}}

\def\ppa{\mathcal{P}_{\alpha}}
\def\pset{\mathcal{P}}
\def\rep{\operatorname{Rep}}
\def\res{\operatorname{Res}}
\def\rr{\mathcal{R}}
\def\rrm{\mathcal{R}_\mu}
\def\sii{\,\Longleftrightarrow\,}
\def\smu{\sim_\mu}
\def\t{\theta}

\def\tst{\T^{\operatorname{str}}}
\def\ty{\mathbf{t}}
\newcommand{\F}{\mathbb F}
\def\P{\mathbb P}
\newcommand{\Q}{\mathbb Q}
\newcommand{\R}{\mathbb R}
\newcommand{\T}{\mathbb T}
\newcommand{\V}{\mathbb V}
\newcommand{\Z}{\mathbb Z}

\begin{document}

\title[On the equivalence of types]{On the equivalence of types}

\author{\sc Enric Nart}
\address{Enric Nart\\
Departament de Matem\`atiques, Universitat Aut\`onoma de Barcelona, E-08193 Bellaterra,  Cerdanyola del Vall\`es, Barcelona\\
Catalonia, Spain}
\email{nart@mat.uab.cat}

\subjclass[2010]{Primary 11Y40; Secondary 13A18, 11S05, 14Q05}

\keywords{inductive valuation, MacLane chain, Newton polygon, residual polynomial, types}

\thanks{Partially supported by MTM2013-40680-P from the Spanish MEC}

\maketitle

\begin{resume}
Un type sur un corps de valuation discr\`ete $(K,v)$ est un objet computationnel qui param\`etrise une famille de polyn\^omes unitaires irr\'eductibles sur $K_v[x]$, o\`u $K_v$ est le compl\'et\'e de $K$. Deux types sont \'equivalents s'ils determinent la m\^eme famille de polyn\^omes irr\'eductibles sur $K_v[x]$. Dans ce travail, nous donnons diff\'erentes caract\'erisations de la notion d'\'equivalence de types par rapport \`a certaines donn\'ees et des op\'erateurs qui leur sont associ\'es.   
\end{resume}

\begin{abstr}
Types over a discrete valued field $(K,v)$ are computational objects that parameterize certain families of monic irreducible polynomials in $K_v[x]$, where $K_v$ is the completion of $K$ at $v$. Two types are considered to be equivalent if they encode the same family of prime polynomials in $K_v[x]$. In this paper, we find diferent characterizations of the equivalence of types in terms of certain data and operators associated with them.  
\end{abstr}

\bigskip
\section{Introduction}

In the 1920's, \O.\! Ore developed a method to construct the prime ideals of a number field dividing a given prime number $p$, in terms of a defining polynomial $f\in\Z[x]$ satisfying a certain \emph{$p$-regularity} condition \cite{ore1, ore2}. The idea was to detect a $p$-adic factorization of $f$ from the factorization of certain residual polynomials over finite fields, attached to the sides of a Newton polygon of $f$. 
He raised then the question of the existence of a procedure to compute the prime ideals in the general case, based on the consideration of similar Newton polygons and residual polynomials ``of higher order". 

S. MacLane solved this problem in 1936 in a more general context \cite{mcla,mclb}. For any discrete valuation $v$ on an arbitrary field $K$, he described all valuations extending $v$ to the rational function field $K(x)$. Starting from the Gauss valuation $\mu_0$, MacLane constructed \emph{inductive} valuations $\mu$ on $K(x)$ extending $v$, by the concatenation of augmentation steps
\begin{equation}\label{depth0}
\mu_0\ \stackrel{(\phi_1,\nu_1)}\lra\  \mu_1\ \stackrel{(\phi_2,\nu_2)}\lra\ \cdots\ \lra\ \mu_{r-1} \ \stackrel{(\phi_{r},\nu_{r})}\lra\ \mu_{r}=\mu,
\end{equation} 
based on the choice of certain \emph{key polynomials} $\phi_i\in K[x]$ and positive rational numbers $\nu_i$.
Then, given an irreducible polynomial $f\in K[x]$, he characterized all extensions of $v$ to the field $L:=K[x]/(f)$ as limits of sequences of inductive valuations on $K(x)$ whose value at $f$ grows to infinity.  In the case $K=\Q$, Ore's $p$-regularity condition is satisfied when all valuations on the number field $L$ extending the $p$-adic valuation are sufficiently close to an inductive valuation on $K(x)$ which may be obtained from $\mu_0$ by a single augmentation step. 

In 1999, J. Montes carried out Ore's program in its original formulation \cite{HN,Mo}. He introduced \emph{types} as computational objects which are able to construct MacLane's valuations and the higher residual polynomial operators foreseen by Ore. These ideas made the whole theory constructive and well-suited to computational applications, and led to the design of several fast algorithms to perform arithmetic tasks in global fields \cite{algorithm,newapp,bases,GNP,Ndiff}.

In 2007, M. Vaqui\'e reviewed and generalized MacLane's work to non-discrete valuations. The introduction of the graded algebra $\ggm$ of a valuation $\mu$ led him to a more elegant presentation of the theory \cite{Vaq}.  

In the papers \cite{Rid} and \cite{gen}, which deal only with discrete valuations, the ideas of Montes were used to develop a constructive treatment of Vaqui\'e's approach, which included the computation of generators of the graded algebras and a thorough revision and simplification of the algorithmic applications. 

In this paper we fill a gap concerning the notion of \emph{equivalence of types}. Let $\oo\subset K$ be the valuation ring of $v$ and $\F$ its residue class field. A \emph{type over} $(K,v)$ is an object carrying certain data distributed into several levels:
$$
\ty=(\psi_0;(\phi_1,\nu_1,\psi_1);\dots;(\phi_r,\nu_r,\psi_r)).
$$
The pairs $\phi_i,\nu_i$ determine an inductive valuation $\mu_\ty:=\mu$ as in (\ref{depth0}), and  $\psi_i\in\F_i[y]$ are monic irreducible polynomials building a tower of finite extensions of $\F$:  
$$
\F=\F_0\lra \F_1\lra \cdots\lra \F_r,\quad\ \F_{i+1}:=\F_i[y]/(\psi_i),\ 0\le i<r.
$$
These data facilitate a recurrent procedure to construct \emph{residual polynomial operators}:
$$
R_i\colon K[x]\lra \F_i[y],\qquad 0\le i\le r, 
$$
having a key role in the theory. The last polynomial $\psi_r$ determines a maximal ideal $\ll_\ty$ of the piece of degree zero of the graded algebra $\gg(\mu_\ty)$. 

Two types are said to be equivalent when they yield the same pair $(\mu_\ty,\ll_\ty)$. This defines an equivalence relation $\equiv$ in the set  $\T$ of all types over $(K,v)$.

Any polynomial $g\in K[x]$ has an \emph{order of divisibility} by the type $\ty$, defined as $\ord_\ty(g):=\ord_{\psi_r}(R_r(g))$ in $\F_r[y]$. Let $\rep(\ty)$ be the set of all \emph{representatives} of $\ty$; that is, monic polynomials $\phi\in\oo[x]$ with minimal degree satisfying $\ord_\ty(\phi)=1$. We have $\rep(\ty)\subset\P$, where $\P$ is the set of monic irreducible polynomials with coefficients in $\oo_v$.

The main result of the paper states that two types $\ty$, $\ty^*$ are equivalent if and only if  $\rep(\ty)=\rep(\ty^*)$ (cf. Theorem \ref{finalchar}).    

The outline of the paper is as follows. In section \ref{secML} we recall some essential facts on MacLane valuations. In section \ref{secComparison} we analyze to what extent different chains  of augmentation steps as in (\ref{depth0}) may build the same valuation $\mu$. In section \ref{secTypes} we find a concrete procedure to decide whether two given types are equivalent, in terms of the data supported by them, and we describe then the relationship between their residual polynomial operators (Lemma \ref{optstep} and Proposition \ref{charequiv}). Finally, we prove Theorem \ref{finalchar}, which yields two more conceptual characterizations of the equivalence of types.  

Finally, let us add some remarks on the incidence of these results in the algorithmic applications of types and MacLane's valuations.

On the set $\P$ we may consider the following equivalence relation: two prime polynomials $F,G\in\P$ are \emph{Okutsu equivalent}, and we write $F\approx G$, if $v(\res(F,G))$ is greater than certain \emph{Okutsu bound} \cite[Sec. 4]{okutsu}, \cite{Ok}. Equivalence of types had been considered in \cite{gen} only for \emph{strongly optimal} types, which form a very special subset $\tst\subset \T$ (cf. section \ref{secTypes}). In \cite[Thm. 3.9]{gen} it is shown that the assignment $\ty\mapsto\rep(\ty)$ induces a canonical bijection
\begin{equation}\label{main}
 \tst/\equiv \ \longrightarrow \ \P/\approx,
\end{equation}
and the levels of $\ty\in\tst$ contain intrinsic data of the prime polynomials
in the Okutsu class of any representative of $\ty$.

Given a monic squarefree $f\in \oo[x]$, the \emph{Montes algorithm}, known also as the \emph{OM factorization algorithm}, computes a family of pairs $(\ty,\phi)$ parameterizing the prime factors of $f$ in $\oo_v[x]$. If a prime factor $F\in\oo_v[x]$ of $f$ is associated with a pair $(\ty,\phi)$, then $\ty$ is a strongly optimal type whose equivalence class is canonically attached to the Okutsu class of $F$ through the mapping of (\ref{main}), and $\phi\approx F$ is a concrete choice in $\oo[x]$ of a polynomial in the Okutsu class of $F$.  

However, the algorithm is based on the construction of certain non-optimal types, which must then be converted into optimal types in the same equivalence class. In the original presentation of the algorithm in \cite{algorithm,Mo}, the discussion of these optimization steps was based on some excruciating arguments, due to the absence of the concept of equivalence of types. Thus, the results of this paper contribute to a great simplification of the analysis of this optimization procedure. This is illustrated in section \ref{secExample}, where we present a concrete example of OM factorization.

\section{MacLane chains of inductive valuations}\label{secML}
Let $K$ be a field equipped with a discrete valuation $v\colon K^*\to \Z$, normalized so that $v(K^*)=\Z$. Let $\oo$ be the valuation ring of $K$, $\m$ the maximal ideal, $\pi\in\m$ a generator of $\m$ and $\F=\oo/\m$ the residue class field. 

Let $K_v$ be the completion of $K$ at $v$, with valuation ring $\oo_v\subset K_v$. Let $v\colon \kb^*\to \Q$ still denote the canonical extension of $v$ to a fixed algebraic closure of $K_v$. 

\subsection{Graded algebra of a valuation}\label{subsecGradedAlg}
Let $\V$ be the set of all discrete valuations $\mu\colon K(x)^*\to \Q$ such that $\mu_{\mid K}=v$ and $\mu(x)\ge0$. 

In the set $\V$ there is a natural partial ordering:
$$
\mu\le \mu' \quad\mbox{ if }\quad\mu(g) \le \mu'(g), \ \forall\,g\in K[x]. 
$$
Consider the Gauss valuation $\mu_0\in \V$ acting on polynomials as follows:
$$
\mu_0\left(\sum\nolimits_{0\le s}a_sx^s\right)=\mn_{0\le s}\left\{v(a_s)\right\}.
$$
Clearly, $\mu_0\le \mu$ for all $\mu\in\V$.

Let $\mu\in\V$ be a valuation. We denote by $\Gamma(\mu)=\mu\left(K(x)^*\right)\subset \Q$ the cyclic group of finite values of $\mu$. The \emph{ramification index} of $\mu$ is the positive integer $e(\mu)$ such that $e(\mu)\Gamma(\mu)=\Z$.   

For any $\alpha\in\Gamma(\mu)$ we consider the following $\oo$-submodules in $K[x]$:
$$
\ppa=\{g\in K[x]\mid \mu(g)\ge \alpha\}\supset
\ppa^+=\{g\in K[x]\mid \mu(g)> \alpha\}.
$$    

The \emph{graded algebra of $\mu$} is the integral domain:
$$
\ggm:=\bigoplus\nolimits_{\alpha\in\Gamma(\mu)}\ppa/\ppa^+.
$$

Let $\Delta(\mu)=\pset_0/\pset_0^+$ be the subring determined by the piece of degree zero of this algebra. Clearly, $\oo\subset\pset_0$ and $\m=\pset_0^+\cap \oo$; thus, there is a canonical homomorphism $\F\to\Delta(\mu)$, equipping  $\Delta(\mu)$ (and $\ggm$) with a canonical structure of $\F$-algebra. 


There is a natural map $\hm\colon K[x]\lra \ggm$, given by $\hm(0)=0$, and
$$\hm(g)= g+\pset_{\mu(g)}^+\in\pset_{\mu(g)}/\pset_{\mu(g)}^+,$$
for $g\ne0$. Note that $\hm(g)\ne0$ if $g\ne0$. For all $g,h\in K[x]$ we have:
$$
\begin{array}{l}
 \hm(gh)=\hm(g)\hm(h), \\
 \hm(g+h)=\hm(g)+\hm(h), \mbox{ if }\mu(g)=\mu(h)=\mu(g+h).
\end{array}
$$

If  $\mu\le \mu'$ for some $\mu'\in\V$, we have a canonical homomorphism of graded algebras 
$$\ggm\to\gg(\mu'),\qquad g+\ppa^+(\mu)\mapsto g+\ppa^+(\mu').$$ The image of $\hm(g)$ is $\hmp(g)$ if $\mu(g)=\mu'(g)$, and zero otherwise. 

\begin{definition}\label{mu}\mbox{\null}
Consider $g,\phi\in K[x]$.

We say that $g,\phi$ are \emph{$\mu$-equivalent}, and we write $g\smu \phi$, if $\hm(g)=\hm(\phi)$. 

We write $\phi\mmu g$, if $\hm(g)$ is divisible by $\hm(\phi)$ in $\ggm$. 

We say that $\phi$ is $\mu$-irreducible if $\hm(\phi)\ggm$ is a non-zero prime ideal.

We say that $\phi$ is $\mu$-minimal if $\phi\nmid_\mu h$ for all non-zero $h\in K[x]$ with $\deg h<\deg \phi$.
\end{definition}

\subsection{Augmentation of valuations}
A \emph{key polynomial} for the valuation $\mu$ is a monic polynomial in $K[x]$ which is $\mu$-minimal and $\mu$-irreducible. Let us denote by $\kpm$ the set of key polynomials for $\mu$.

Every key polynomial has coefficients in $\oo$ and is irreducible in $\oo_v[x]$ \cite[Lem. 1.8, Cor. 1.10]{Rid}. 

\begin{lemma}\cite[Lem. 1.4]{Rid}\label{mid=sim}
Consider $\phi\in\kpm$ and $g\in K[x]$ a monic polynomial such that $\phi\mmu g$ and $\deg g=\deg\phi$. Then, $\phi\smu g$ and $g$ is a key polynomial for $\mu$ too.
\end{lemma}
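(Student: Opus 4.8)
The plan is to unwind the definitions of $\mu$-minimality and $\mu$-divisibility inside the graded algebra $\ggm$ and exploit the fact that $\hm(\phi)$ generates a prime ideal. First I would record the hypothesis $\phi\mmu g$ as an equality $\hm(g)=\hm(\phi)\,\xi$ for some homogeneous $\xi\in\ggm$, and observe that comparing $\mu$-values forces $\mu(g)=\mu(\phi)+\deg_{\ggm}\!\xi$; since $\deg g=\deg\phi$ and both are monic, a degree/value bookkeeping argument (using that $\mu$ on $K[x]$ is determined by the augmentation data, with $\mu(x)$ and $\mu(\phi)$ controlling the growth) should show that $\xi$ must lie in the degree-zero piece $\Delta(\mu)$, i.e.\ $\xi\in\Delta(\mu)^*$ is a unit — equivalently $\mu(g)=\mu(\phi)$. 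The key point here is that if $\xi$ were a nonunit homogeneous element, one could extract a factor and produce a genuine proper divisor of $\hm(g)$ coming from a polynomial of degree strictly less than $\deg\phi$, contradicting $\mu$-minimality of $\phi$ applied to $g$; this is essentially the standard fact that a $\mu$-minimal $\phi$ cannot properly divide anything of its own degree except up to a unit.

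Next, from $\hm(g)=u\,\hm(\phi)$ with $u\in\Delta(\mu)^*$, I would lift $u$ to a unit in $\oo$ (or adjust by $\hm$ of a constant) — more carefully, $u$ is the class of a unit $c\in\oo^*$ since $\Delta(\mu)$ contains $\F=\oo/\m$ and the degree-zero units that matter come from constants — and then set $\phi':=c^{-1}g$ or directly compare: one gets $\hm(g-c\phi)$ either zero or of strictly larger $\mu$-value. Since $g$ and $\phi$ are both monic of the same degree, $g-c\phi$ has degree $<\deg\phi$ unless $c=1$; but $\phi\mmu g$ and $\phi\mmu(c\phi)$ give $\phi\mmu(g-c\phi)$ whenever $\mu(g-c\phi)\ge\mu(\phi)$, and $\mu$-minimality of $\phi$ then forces $g-c\phi=0$ if the value condition holds. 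A short case analysis on $\mu(g - c\phi)$ versus $\mu(\phi)$, using $\mu(g)=\mu(\phi)$ and the non-archimedean inequality $\mu(g-c\phi)\ge\min\{\mu(g),\mu(c\phi)\}=\mu(\phi)$, pins down $c=1$ and $g=\phi$, hence certainly $g\smu\phi$; and trivially $g=\phi$ is then a key polynomial. (If instead the intended conclusion allows $g\ne\phi$, the same analysis gives $g\smu\phi$, i.e.\ $\hm(g)=\hm(\phi)$, directly, and then $g$ is $\mu$-minimal and $\mu$-irreducible because those properties depend only on $\hm(g)$ together with $\deg g=\deg\phi$.)

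Finally I would verify that $g$ inherits both defining properties of a key polynomial purely formally: $\mu$-irreducibility of $g$ is immediate because $\hm(g)\ggm=\hm(\phi)\ggm$ is a nonzero prime ideal by hypothesis on $\phi$; and $\mu$-minimality of $g$ follows because for any nonzero $h$ with $\deg h<\deg g=\deg\phi$, $g\mmu h$ would give $\hm(\phi)\mid\hm(h)$ (same principal ideal), contradicting $\mu$-minimality of $\phi$. The main obstacle I anticipate is the first step — showing the homogeneous cofactor $\xi$ is forced into degree zero — since that is where one genuinely uses how $\mu$ is built from its augmentation data and the interplay between polynomial degree and $\mu$-value; the rest is bookkeeping with the multiplicative and additive rules for $\hm$ and the non-archimedean triangle inequality.
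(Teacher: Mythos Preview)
The paper does not prove this lemma; it is quoted from \cite{Rid}, so there is no in-paper argument to compare against. I comment on the plan itself.

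Your final paragraph is fine: once $\hm(g)=\hm(\phi)$, the key-polynomial properties transfer because they depend only on the principal ideal $\hm(g)\ggm$ and on $\deg g$. The problem is everything before that. In the middle paragraph you assert that a unit $u\in\Delta(\mu)^*$ lifts to some $c\in\oo^*$; this is false in general, since $\Delta(\mu)^*=\F_\mu^*$ and $\F_\mu$ is typically a nontrivial extension of $\F=\oo/\m$. The subsequent case analysis on $g-c\phi$ aims at the wrong target (the lemma does not claim $g=\phi$) and also misuses additivity: from $\phi\mmu g$ and $\phi\mmu c\phi$ one cannot conclude $\phi\mmu(g-c\phi)$, because $\hm$ is only additive when all three $\mu$-values agree. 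Your first step is likewise only a hope: the phrase ``$\mu$-minimality of $\phi$ applied to $g$'' has no meaning, and no mechanism is given to produce the alleged divisor coming from a polynomial of degree $<\deg\phi$.

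The missing idea dissolves all of this: exploit monicity and equal degree at the outset by writing $g=\phi+a$ with $a=0$ or $\deg a<\deg\phi$. If $\mu(a)<\mu(\phi)$ then $\hm(g)=\hm(a)$, so $\phi\mmu g$ gives $\phi\mmu a$; if $\mu(a)=\mu(\phi)$ then either $\hm(a)=\hm(g)-\hm(\phi)$ (when $\mu(g)=\mu(\phi)$) or $\hm(a)=-\hm(\phi)$ (when $\mu(g)>\mu(\phi)$), and in both subcases $\phi\mmu a$. Each of these contradicts $\mu$-minimality of $\phi$. Hence $a=0$ or $\mu(a)>\mu(\phi)$, whence $\hm(g)=\hm(\phi)$ directly, and no cofactor $\xi$ needs to be analyzed at all.
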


\begin{definition}\label{muprima}
Take $\phi\in \kpm$. For $g\in K[x]$ let $g=\sum_{0\le s}a_s\phi^s$ be its canonical $\phi$-expansion in $K[x]$, uniquely determined by the condition $\deg a_s<\deg\phi$
for all $s\ge 0$.  

Take $\nu\in \Q_{>0}$. The augmented valuation $\mu'=[\mu;\phi,\nu]$ with respect to the pair $\phi,\nu$ is the valuation $\mu'$ determined by the following action on $K[x]$:
 $$\mu'(g):=\mn_{0\le s}\{\mu(a_s\phi^s)+s\nu\}=\mn_{0\le s}\{\mu'(a_s\phi^s)\}.$$ 
\end{definition}

\begin{proposition}\cite[Prop. 1.7]{Rid}\label{extension}
\begin{enumerate}
\item 
The natural extension of $\mu'$ to $K(x)$ is a valuation and $\mu\le\mu'$.
\item
For a non-zero $g\in K[x]$, $\mu(g)=\mu'(g)$ if and only if $\phi\nmid_{\mu}g$. 
\item The polynomial $\phi$ is a key polynomial for $\mu'$ too.
\end{enumerate}
\end{proposition}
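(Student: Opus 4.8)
The plan is to deduce all three statements from a single structural fact about $\mu$-minimality, the \emph{principal part lemma}: if $\phi\in\kpm$ and $g=\sum_{s\ge0}a_s\phi^s$ is the canonical $\phi$-expansion of a non-zero $g\in K[x]$, then $\mu(g)=\mn_s\{\mu(a_s\phi^s)\}$ and, writing $m$ for this minimum and $S=\{s:\mu(a_s\phi^s)=m\}$, the image of $g$ in $\pset_m/\pset_m^+$ equals $\sum_{s\in S}\hm(a_s)\hm(\phi)^s$. I would prove this by noting that the terms with $s\notin S$ lie in $\pset_m^+$, so the displayed sum is the only candidate leading form; were it zero, dividing the relation by $\hm(\phi)^{\mn S}$ inside the domain $\ggm$ would force $\hm(a_{\mn S})$, with $\deg a_{\mn S}<\deg\phi$, into $\hm(\phi)\ggm$, contradicting the $\mu$-minimality of $\phi$. (This is classical and may be available in the references; I isolate it because it drives everything below.)

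Granting the principal part lemma, two of the three items are short. Since $\nu>0$, the defining formula gives $\mu'(g)=\mn_s\{\mu(a_s\phi^s)+s\nu\}\ge\mn_s\{\mu(a_s\phi^s)\}=\mu(g)$; applied to the coefficientwise $\phi$-expansion of $g+h$ it gives the ultrametric inequality, and plainly $\mu'_{\mid K}=v$, $\mu'(x)\ge0$. For part (2): the formula shows $\mu'(g)=\mu(g)$ precisely when the minimum $m$ is attained at $s=0$, i.e.\ when $0\in S$; and by the leading-form description, $0\notin S$ makes $\hm(\phi)\mid\hm(g)$, whereas $0\in S$ makes $\hm(g)\equiv\hm(a_0)\pmod{\hm(\phi)\ggm}$ with $\hm(a_0)\ne0$ not a multiple of $\hm(\phi)$ (again $\mu$-minimality). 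Hence $\mu(g)=\mu'(g)\iff\phi\nmid_\mu g$.

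The main content of (1) --- and the step I expect to be the main obstacle --- is the multiplicativity of $\mu'$. The inequality $\mu'(gh)\ge\mu'(g)+\mu'(h)$ is the easy half: expanding $gh$ and reducing each product of coefficients modulo $\phi$, the principal part lemma bounds from below the value of every $\phi$-coefficient of $gh$, with $\nu>0$ pushing the ``carry'' contributions strictly above $\mu'(g)+\mu'(h)$. This makes $\gg(\mu')$ a graded ring, and for the reverse inequality the plan is to prove the stronger fact that the canonical homomorphism $\ggm\to\gg(\mu')$ --- which kills $\hm(\phi)$, since $\mu'(\phi)=\mu(\phi)+\nu>\mu(\phi)$ --- extends to an isomorphism
\[
\Psi\colon\bigl(\ggm/\hm(\phi)\ggm\bigr)[Y]\iso\gg(\mu'),\qquad Y\mapsto\hmp(\phi),
\]
$Y$ an indeterminate of weight $\mu'(\phi)$. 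Surjectivity is immediate from $\hmp(g)=\sum_{s\in S'(g)}\hmp(a_s)\hmp(\phi)^s$, where $S'(g)=\{s:\mu(a_s\phi^s)+s\nu=\mu'(g)\}$; injectivity follows because a non-zero homogeneous $P=\sum_s\overline{\hm(a_s)}Y^s$ (which we may represent using polynomials $a_s$ of degree $<\deg\phi$) is sent to $\hmp\bigl(\sum_s a_s\phi^s\bigr)$, which is non-zero because $\sum_s a_s\phi^s\ne0$ is a $\phi$-expansion. As $\phi$ is $\mu$-irreducible, $\ggm/\hm(\phi)\ggm$ is a domain, hence so is the polynomial ring and so is $\gg(\mu')$; thus $\hmp(g)\hmp(h)\ne0$, which forces $\mu'(gh)\le\mu'(g)+\mu'(h)$. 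So $\mu'$ is a valuation on $K[x]$, extends uniquely to $K(x)$, and $\mu\le\mu'$; this proves (1).

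Finally, (3) falls out of the description of $\gg(\mu')$. Under $\Psi$, $\hmp(\phi)$ corresponds to $Y$ and, for $\deg a<\deg\phi$, $\hmp(a)$ corresponds to $\overline{\hm(a)}$. Hence $\hmp(\phi)\gg(\mu')$ corresponds to $Y\cdot R[Y]$ with $R=\ggm/\hm(\phi)\ggm$ a domain: a non-zero prime ideal, so $\phi$ is $\mu'$-irreducible. And for non-zero $h$ with $\deg h<\deg\phi$, $\hmp(h)$ corresponds to $\overline{\hm(h)}\in R$, which is non-zero (by $\mu$-minimality) and, having $Y$-degree $0$, is not divisible by $Y$; so $\phi\nmid_{\mu'}h$ and $\phi$ is $\mu'$-minimal. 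Therefore $\phi$ is a key polynomial for $\mu'$. The genuinely laborious parts of this plan are the estimate $\mu'(gh)\ge\mu'(g)+\mu'(h)$ and the proof that $\Psi$ is bijective; the rest is formal.
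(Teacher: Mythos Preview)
The paper does not prove this proposition; it is quoted from \cite[Prop.~1.7]{Rid} without argument. So there is no ``paper's own proof'' to compare against, and the question reduces to whether your argument stands on its own. It does: the principal part lemma you isolate is exactly the right structural input, and your route to multiplicativity via the graded-algebra description
\[
\Psi\colon\bigl(\ggm/\hm(\phi)\ggm\bigr)[Y]\iso\gg(\mu'),\qquad Y\mapsto\hmp(\phi),
\]
is clean and gives (3) for free. One point worth making explicit, since you invoke ``the canonical homomorphism $\ggm\to\gg(\mu')$'' before $\mu'$ is known to be a valuation: at that stage $\gg(\mu')$ is only a graded ring (from submultiplicativity), and one should check that the canonical map is still a ring homomorphism. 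It is, because $g+\ppa^+(\mu)\mapsto g+\ppa^+(\mu')$ is well defined from $\mu\le\mu'$ and products match since $gh+\pset^+_{\alpha+\beta}(\mu')$ is the product in $\gg(\mu')$ of the images; but this deserves one line. Likewise, in the injectivity step you should note (as you implicitly use) that every homogeneous element of $\ggm/\hm(\phi)\ggm$ of degree $\alpha$ is represented by some $a$ with $\deg a<\deg\phi$ and $\mu(a)=\alpha$: this follows from the principal part lemma applied to any preimage. With these two remarks made explicit, the proof is complete.
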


\begin{lemma}\cite[Lem. 3.5]{Rid}\label{unicity}
Let $\mu''=[\mu;\phi^*,\nu^*]$ be another augmentation of $\mu$. We have $\mu'=\mu''$ if and only if $\deg\phi^*=\deg\phi$, $\mu'(\phi^*-\phi)\ge \mu'(\phi)$,  and $\nu^*=\nu$. In this case, $\phi^*\smu\phi$. 
\end{lemma}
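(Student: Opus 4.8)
The plan is to prove the two implications separately and to read off the final assertion $\phi^*\smu\phi$ from each. The workhorse throughout is Proposition \ref{extension}(2): if $h\ne0$ has degree smaller than that of a key polynomial $\psi$ for $\mu$, then $\mu$-minimality of $\psi$ gives $\psi\nmid_\mu h$, so the value of $h$ is unchanged under the augmentation of $\mu$ by $\psi$.

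For the implication $(\Leftarrow)$ I would assume $\deg\phi^*=\deg\phi=:m$, $\mu'(\phi^*-\phi)\ge\mu'(\phi)$ and $\nu^*=\nu$ (the case $\phi^*=\phi$ being trivial). Since $\deg(\phi^*-\phi)<m$, the remark above gives $\mu(\phi^*-\phi)=\mu'(\phi^*-\phi)\ge\mu'(\phi)=\mu(\phi)+\nu>\mu(\phi)$, hence $\mu(\phi^*)=\mu(\phi)$ and $\hm(\phi^*)=\hm(\phi)$, i.e. $\phi^*\smu\phi$. Feeding the $\phi$-expansion $\phi^*=(\phi^*-\phi)+\phi$ into the augmentation formula then gives $\mu'(\phi^*)=\mu(\phi)+\nu=\mu(\phi^*)+\nu^*=\mu''(\phi^*)$, and likewise $\mu''(\phi)=\mu'(\phi)$. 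To conclude $\mu'=\mu''$ I would compare the two valuations on a general $g\in K[x]$ through its $\phi$-expansion $g=\sum_s a_s\phi^s$: each $a_s$ has degree $<m$, so $\mu'(a_s)=\mu(a_s)=\mu''(a_s)$, while $\mu''(\phi)=\mu'(\phi)$ forces $\mu''(\phi^s)=\mu'(\phi^s)$; thus $\mu''(a_s\phi^s)=\mu'(a_s\phi^s)$ for every $s$, and the ultrametric inequality for $\mu''$ yields $\mu''(g)\ge\min_s\mu''(a_s\phi^s)=\mu'(g)$, the last equality being the definition of $\mu'$. The symmetric argument with the $\phi^*$-expansion of $g$ gives the reverse inequality, so $\mu'=\mu''$.

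For the implication $(\Rightarrow)$ I would assume $\mu'=\mu''$ and first settle the degrees: if $\deg\phi^*<\deg\phi$, then $\phi\nmid_\mu\phi^*$ by $\mu$-minimality of $\phi$, so $\mu'(\phi^*)=\mu(\phi^*)<\mu(\phi^*)+\nu^*=\mu''(\phi^*)$, contradicting $\mu'=\mu''$; by symmetry $\deg\phi=\deg\phi^*=:m$. Writing $c:=\mu(\phi^*-\phi)=\mu'(\phi^*-\phi)=\mu''(\phi^*-\phi)$ (again by minimality), $A:=\mu(\phi)+\nu$ and $B:=\mu(\phi^*)+\nu^*$, the augmentation formula applied to the $\phi$-expansion of $\phi^*$ and the $\phi^*$-expansion of $\phi$ gives
$$
A=\mu'(\phi)=\mu''(\phi)=\min\{c,B\},\qquad B=\mu''(\phi^*)=\mu'(\phi^*)=\min\{c,A\}.
$$
A short case analysis on this pair of equalities forces $A=B$ and $c\ge A$ (for instance $A<B$ would give $\min\{c,A\}\le A<B$, contradicting $B=\min\{c,A\}$). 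Now $c\ge A$ is precisely $\mu'(\phi^*-\phi)\ge\mu'(\phi)$; moreover $c\ge A>\mu(\phi)$ gives $\mu(\phi^*)=\mu(\phi)$, hence $\phi^*\smu\phi$, after which $A=B$ reads $\nu^*=\nu$.

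I expect the only genuine decision point to lie in $(\Rightarrow)$: one is tempted to relate the $\phi$- and $\phi^*$-expansions of an arbitrary polynomial, but this is unnecessary — substituting $\phi$ and $\phi^*$ into each other's augmentation formula already produces the two scalar identities above, and the rest is formal. Apart from that the proof is routine bookkeeping, the one recurring point being to keep track of exactly when $\mu$, $\mu'$ and $\mu''$ agree, which in every instance is dictated by $\mu$-minimality of the key polynomial in play via Proposition \ref{extension}(2).
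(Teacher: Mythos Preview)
The paper does not supply its own proof of this lemma; it is quoted verbatim from \cite[Lem.~3.5]{Rid} and used as a black box. There is therefore nothing in the present paper to compare against.

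That said, your argument is correct and self-contained. The $(\Leftarrow)$ direction is handled cleanly by comparing the $\phi$- and $\phi^*$-expansions of an arbitrary $g$ and invoking the ultrametric inequality in each direction. In the $(\Rightarrow)$ direction, evaluating each augmented valuation on the other's key polynomial to obtain the pair of identities $A=\min\{c,B\}$ and $B=\min\{c,A\}$ is exactly the right economy: it bypasses any need to relate the two expansions of a general polynomial, and the case analysis forcing $A=B$ and $c\ge A$ is immediate. The only small point to watch is the degenerate case $\phi^*=\phi$, which you correctly set aside as trivial (there $\mu'(\phi)=\mu''(\phi)$ directly gives $\nu=\nu^*$).
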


Denote $\Delta=\Delta(\mu)$, and let $I(\Delta)$ be the set of ideals in $\Delta$. Consider the following \emph{residual ideal operator}:
$$
\rr=\rrm\colon K[x]\lra I(\Delta),\qquad g\mapsto \Delta\cap \hm(g)\ggm.
$$

Let $\phi$ be a key polynomial for $\mu$. Choose a root $\t \in\kb$ of $\phi$ and denote by $K_\phi=K_v(\t)$ the finite extension of $K_v$ generated by $\t$. Also, let $\oo_\phi\subset K_\phi$ be the valuation ring of $K_\phi$, $\m_\phi$ the maximal ideal and $\F_\phi=\oo_\phi/\m_\phi$ the residue class field. 

\begin{proposition}\cite[Prop. 1.12]{Rid}\label{sameideal}
If $\phi$ is a key polynomial for $\mu$, then 
\begin{enumerate}
\item $\rr(\phi)$ is the kernel of the onto homomorphism $\Delta\twoheadrightarrow \F_\phi$ determined by $g+\pset^+_0\ \mapsto\ g(\t)+\m_\phi$. 
Hence, $\rr(\phi)$ is a maximal ideal of $\Delta$.
\item $\rr(\phi)=\op{Ker}(\Delta\to \Delta(\mu'))$ for any augmented valuation  $\mu'=[\mu;\phi,\nu]$. Thus, the image of $\Delta\to\Delta(\mu')$ is a field canonically isomorphic to $\F_\phi$. 
\end{enumerate}
\end{proposition}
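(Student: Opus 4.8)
The plan is to prove part (2) first, where everything is formal, and then part (1), whose only substantial ingredient is a comparison between $\mu$ and the valuation $g\mapsto v(g(\t))$ of $K(x)$. For part (2), fix an augmented valuation $\mu'=[\mu;\phi,\nu]$ and consider the degree-zero component $\Delta\to\Delta(\mu')$ of the canonical morphism $\ggm\to\gg(\mu')$. A non-zero element of $\Delta$ is the class of some $g\in K[x]$ with $\mu(g)=0$, and such a class lies in $\rr(\phi)$ exactly when $\phi\mmu g$. If $\phi\nmid_\mu g$, then $\mu'(g)=\mu(g)=0$ by Proposition \ref{extension}(2), so the image of $\hm(g)$ is $\hmp(g)$, which is non-zero and of degree zero; if $\phi\mmu g$, then $\mu'(g)>\mu(g)=0$, so the image is $0$. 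Hence $\op{Ker}(\Delta\to\Delta(\mu'))$ consists precisely of the classes with $\phi\mmu g$, together with $0$, i.e. it equals $\rr(\phi)$; the image is therefore $\Delta/\rr(\phi)$, which part (1) identifies canonically with the field $\F_\phi$.

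For part (1), the key input is a basic property of key polynomials, which I would either quote from the theory of MacLane chains or establish by induction along a chain for $\mu$: namely, $\mu(g)\le v(g(\t))$ for every $g\in K[x]$, with equality if and only if $\phi\nmid_\mu g$. (From the $\phi$-expansion $g=\sum_s a_s\phi^s$ one has $g(\t)=a_0(\t)$, so the point reduces to the equality $v(a(\t))=\mu(a)$ for non-zero $a$ of degree $<\deg\phi$, which is propagated from the Gauss valuation along the chain, using at each step that $\phi_i$ is a key polynomial for $\mu_{i-1}$.) Granting this, the assignment $g+\pset^+_0\mapsto g(\t)+\m_\phi$ is well defined on $\Delta$: if $\mu(g)\ge 0$ then $v(g(\t))\ge 0$, so $g(\t)\in\oo_\phi$, and if $\mu(g)>0$ then $g(\t)\in\m_\phi$. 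It is visibly a homomorphism of $\F$-algebras, and its kernel is $\rr(\phi)$, since the class of a $g$ with $\mu(g)=0$ is killed if and only if $v(g(\t))>0=\mu(g)$, if and only if $\phi\mmu g$. It remains to check surjectivity: the image is an $\F$-subalgebra of $\F_\phi$ containing the classes of all $g$ with $\mu(g)=0$ — not just those with $g\in\oo[x]$ — and one shows it exhausts $\F_\phi$ by climbing the chain, where at each level a suitable monomial in $\pi$ and the lower key polynomials realizes a generator of the corresponding residual extension. (Alternatively, surjectivity can be extracted from part (2) and the standard description of $\Delta(\mu')$.) Then $\rr(\phi)=\op{Ker}$ is a maximal ideal and $\Delta/\rr(\phi)\cong\F_\phi$ via the stated map; combined with part (2), this shows that the image of $\Delta\to\Delta(\mu')$ is this field.

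The genuinely non-formal step — and what I expect to be the main obstacle — is the comparison $\mu(g)\le v(g(\t))$ together with its equality criterion (and, relatedly, the surjectivity onto $\F_\phi$); everything else is bookkeeping with graded pieces and with the definition of the augmented valuation. In a self-contained account it is precisely here that the structure theory of MacLane chains — the interplay between consecutive key polynomials, their values at $\t$, and the associated tower of residual extensions — has to be brought in.
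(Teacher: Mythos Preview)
The paper does not prove this proposition: it is quoted with a citation to \cite[Prop.~1.12]{Rid} and no argument is given here. So there is no proof in the paper to compare your attempt against.

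That said, your outline is a sound reconstruction of how such a proof goes. Part~(2) is indeed purely formal once one has Proposition~\ref{extension}(2), and your argument for it is correct. For part~(1), the comparison $\mu(g)\le v(g(\t))$ with equality if and only if $\phi\nmid_\mu g$ is precisely \cite[Prop.~1.9]{Rid}, which the present paper also invokes later (in the proof of Theorem~\ref{finalchar}); your identification of this as the essential non-formal input is on target. The one place where your sketch stays vague is the surjectivity onto $\F_\phi$: this does require the structure of $\Delta(\mu)$ along a MacLane chain, and in \cite{Rid} it is handled through the explicit description of $\Delta(\mu)$ via the generators $y_i$ and the isomorphisms $j_i$ of (\ref{ji}), rather than by an ad hoc inductive climb. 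Your alternative suggestion --- to extract surjectivity from part~(2) together with the description of $\Delta(\mu')$ --- is circular as stated, since that description is itself built on the present proposition.
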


The map $\rr\colon\kpm\to\mx(\Delta)$ is onto and its fibers coincide with the $\mu$-equivalence classes of key polynomials  \cite[Thm. 5.7]{Rid}:
\begin{equation}\label{repr}
\rr(\phi)=\rr(\phi^*) \sii \phi\smu\phi^*\sii \phi\mmu\phi^*.
\end{equation} 

\subsection{MacLane chains}\label{subsecML}
Let $\mu\in \V$ be an \emph{inductive valuation}; that is, $\mu$ may be obtained from the Gauss valuation $\mu_0$ by a finite number of augmentation steps:
\begin{equation}\label{depth}
\mu_0\ \stackrel{\phi_1,\nu_1}\lra\  \mu_1\ \stackrel{\phi_2,\nu_2}\lra\ \cdots
\ \stackrel{\phi_{r-1},\nu_{r-1}}\lra\ \mu_{r-1} 
\ \stackrel{\phi_{r},\nu_{r}}\lra\ \mu_{r}=\mu
\end{equation}
satisfying $\phi_{i+1}\nmid _{\mu_i}\phi_i$ for all $1\le i<r$. Such a chain of augmentations is called a \emph{MacLane chain} of $\mu$.
In a MacLane chain, the value group $\Gamma(\mu_{i})$ is the subgroup of \,$\Q$ generated by $\Gamma(\mu_{i-1})$ and $\nu_{i}$, for any $1\le i\le r$. In particular,
$$
\Z=\Gamma(\mu_0)\subset \Gamma(\mu_1)\subset\cdots\subset\Gamma(\mu_{r-1})\subset \Gamma(\mu_r)=\Gamma(\mu).
$$

A MacLane chain of $\mu$ supports several data and operators containing relevant information about $\mu$. Among them, the following deserve special mention:\medskip


\noindent{\bf (1) A sequence of finite field extensions of the residue class field:}
$$
\begin{array}{rcccccl}
\Delta_0&\lra&\Delta_1&\lra&\cdots&\lra&\Delta_r=\Delta(\mu)\\
\cup&&\cup&&\cdots&&\cup\\
\F=\F_0&\lra&\F_1&\lra&\cdots&\lra&\F_r
\end{array}
$$
where $\Delta_i=\Delta(\mu_i)$, the maps $\Delta_i\to\Delta_{i+1}$ are the canonical homomorphisms induced from the inequality $\mu_i\le\mu_{i+1}$, and $\F_i:=\op{Im}(\Delta_{i-1}\to\Delta_i)$.\medskip

\noindent {\bf (2) Numerical data.} Set $\phi_0=x$, $\nu_0=0$,  $\mu_{-1}=\mu_0$ and $\F_{-1}=\F_0$. 

For all $0\le i\le r$, we define integers:
$$
\begin{array}{lll}
e_i:=e(\mu_i)/e(\mu_{i-1}),&\quad f_{i-1}:=[\F_{i}\colon \F_{i-1}], &\quad h_i:=e(\mu_i)\nu_i,\\
m_i:=\deg\phi_i,&\quad V_i:=e(\mu_{i-1})\mu_{i-1}(\phi_i). &
\end{array}
$$
which satisfy the following relations for $1\le i\le r$:
\begin{equation}\label{recurrence}
\begin{array}{l}
\gcd(e_i,h_i)=1,\\
e(\phi_i)=e(\mu_{i-1})=e_0\cdots e_{i-1},\\
f(\phi_i)=\left[\F_i\colon \F_0\right]=f_0\cdots f_{i-1},\\
m_i=e_{i-1}f_{i-1}m_{i-1}=(e_0\cdots e_{i-1})(f_0\cdots f_{i-1}),\\
V_i=e_{i-1}f_{i-1}(e_{i-1}V_{i-1}+h_{i-1}),\\
\end{array}
\end{equation}
where $e(\phi_i)$, $f(\phi_i)$ denote the ramification index and residual degree of the finite extension $K_{\phi_i}/K_v$, respectively.\medskip

\noindent {\bf (3) Generators of the graded algebras}:
$$
p_i\in\gg(\mu_i)^*,\quad x_i\in\gg(\mu_i),\quad y_i\in\Delta_i,\quad 0\le i\le r,
$$ 
such that $\Delta_i=\F_i[y_i]$ and $\gg(\mu_i)=\Delta_i[p_i,p_i^{-1}][x_i]$. The elements $p_i,y_i$ are algebraically independent over $\F_i$ and $x_i$ satisfies the algebraic relation $x_i^{e_i}=y_ip_i^{h_i}$.
In particular, we have a family of $\F_i$-isomorphisms:
\begin{equation}\label{ji}
 j_i\colon \F_i[y]\lra \Delta_i,\quad y\mapsto y_i,\quad 0\le i\le r.
\end{equation}

Starting with $p_0=\op{H}_{\mu_0}(\pi)$, the generators are defined by the following recurrent relations:
$$
x_i=\op{H}_{\mu_i}(\phi_i)p_i^{-V_i},\quad 
y_i=x_i^{e_i}p_i^{-h_i},\quad
p_{i+1}=x_i^{\ell_i}p_i^{\ell'_i},
$$
where $\ell_i$, $\ell'_i\in\Z$ are uniquely determined by $\ell_ih_i+\ell'_ie_i=1$ and $0\le \ell_i<e_i$. In the relation concerning $p_{i+1}$ we identify the elements $x_i$, $p_i$ with their images under the canonical homomorphism $\gg(\mu_i)\to\gg(\mu_{i+1})$. \medskip

\noindent {\bf (4) Newton polygon operators}:
$$
N_i:=N_{\mu_{i-1},\phi_i}\colon K[x]\lra 2^{\R^2},\quad 1\le i\le r.
$$
For any nonzero $g\in K[x]$ consider its canonical $\phi$-expansion $g=\sum_{0\le s}a_s\phi^s$, where $a_s\in K[x]$ have $\deg a_s<\deg\phi$. Then, $N_i(g)$ is the lower convex hull of the set of points $\left\{(s,\mu_{i-1}(a_s\phi_i^s))\mid s\ge0\right\}$ in the Euclidean plane.\medskip

\noindent {\bf (5) Residual polynomial operators}:
$$
R_i:=R_{\mu_{i-1},\phi_i,\nu_i}\colon K[x]\lra \F_i[y], \quad 0\le i\le r,
$$
uniquely determined by the condition:
\begin{equation}\label{mainR}
\op{H}_{\mu_i}(g)=x_i^{s_i(g)}p_i^{u_i(g)}R_i(g)(y_i),
\end{equation}
for all nonzero $g\in K[x]$. For $i=0$ we define $s_0(g)=0$, $u_0(g)=\mu_0(g)$. For $i>0$, the point $(s_i(g),u_i(g)/e(\mu_{i-1}))$ is the left end point of the \emph{$\nu_i$-component} $S_{\nu_i}(g)$ of the Newton polygon $N_i(g)$, which is defined as the intersection of $N_i(g)$ with the line of slope $-\nu_i$ first touching the polygon from below (see Figure \ref{figComponent}). 

Let $s_i(g)\le s'_i(g)$ be the abscissas of the left end points of $S_{\nu_i}(g)$. The
 polynomial $R_i(g)$ has degree $(s'_i(g)-s_i(g))/e_i$, nonzero constant term, and it determines a generator of the residual ideal $\rr_{\mu_i}(g)$ as follows:
$\rr_{\mu_i}(g)=y_i^{\lceil s_i(g)/e_i\rceil}R_i(g)(y_i)\Delta_i$. \medskip

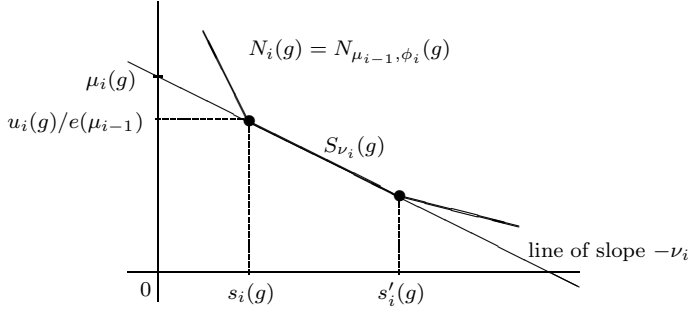
\begin{figure}
\caption{$\nu_i$-component $S_{\nu_i}(g)$ of $N_i(g)$ for $g\in K[x]$.}\label{figComponent}
\begin{center}
\setlength{\unitlength}{4mm}
\begin{picture}(14,10)
\put(2.8,5.8){$\bullet$}\put(7.8,3.3){$\bullet$}
\put(-1,1){\line(1,0){15}}\put(0,0){\line(0,1){10}}
\put(-1,8){\line(2,-1){15}}
\put(3,6){\line(-1,2){1.5}}\put(3,6.04){\line(-1,2){1.5}}
\put(3,6){\line(2,-1){5}}\put(3,6.04){\line(2,-1){5}}
\put(8,3.5){\line(4,-1){4}}\put(8,3.54){\line(4,-1){4}}
\multiput(3,.9)(0,.25){21}{\vrule height2pt}
\multiput(8,.9)(0,.25){11}{\vrule height2pt}
\multiput(-.1,6.05)(.25,0){13}{\hbox to 2pt{\hrulefill }}
\put(7.3,.1){\begin{footnotesize}$s'_i(g)$\end{footnotesize}}
\put(2.3,.1){\begin{footnotesize}$s_i(g)$\end{footnotesize}}
\put(-5,5.8){\begin{footnotesize}$u_i(g)/e(\mu_{i-1})$\end{footnotesize}}
\put(12.3,1.5){\begin{footnotesize}line of slope $-\nu_i$\end{footnotesize}}
\put(-.6,.2){\begin{footnotesize}$0$\end{footnotesize}}
\put(3,8.2){\begin{footnotesize}$N_i(g)=N_{\mu_{i-1},\phi_i}(g)$\end{footnotesize}}
\put(5.5,5){\begin{footnotesize}$S_{\nu_i}(g)$\end{footnotesize}}
\put(-.15,7.5){\line(1,0){.3}}
\put(-2.4,7.2){\begin{footnotesize}$\mu_i(g)$\end{footnotesize}}
\end{picture}
\end{center}
\end{figure}

\noindent {\bf (6) A family of maximal ideals} $\ll_i\in\mx(\Delta_i)$, for $0\le i<r$. The ideals $\ll_i$ are determined by Proposition \ref{sameideal} as:
$$
\ll_i:=\op{Ker}(\Delta_i\lra \Delta_{i+1})=\rr_{\mu_i}(\phi_{i+1}),\quad 0\le i<r.
$$ 
Through the isomorphisms $j_i$ of (\ref{ji}), these ideals yield monic irreducible polynomials  $\psi_i\in\F_i[y]$ uniquely determined by the condition $j_i(\psi_i\F_i[y])=\psi_i(y_i)\Delta_i=\ll_i$, or alternatively, by the condition $\psi_i=R_i(\phi_{i+1})$.
We have a commutative diagram with vertical isomorphisms:
$$\as{1.2}\begin{array}{ccl}\F_i[y]&\twoheadrightarrow&\F_i[y]/(\psi_i)\\j_i\downarrow\hphantom{m}&&\quad\downarrow\\\Delta_i&\twoheadrightarrow&\Delta_i/\ll_i\iso\F_{i+1}\subset \Delta_{i+1}\end{array}$$
Hence, $\deg\psi_i=[\F_{i+1}\colon\F_i]=f_i$, for $0\le i<r$.

\section{Data comparison between MacLane chains}\label{secComparison}

Consider a MacLane chain of an inductive valuation $\mu$ as in (\ref{depth}), supporting the data and operators described above. In this section, we analyze the variation of these data and operators when a different MacLane chain of the same valuation is chosen.

Note that $\F_r$ is the algebraic closure of $ \F$ in $\Delta:=\Delta(\mu)$, through the canonical map $\F\to\Delta$. Thus, this field does not depend on the choice of the MacLane chain. We may denote it by $\F_\mu:=\F_r$. It must not be confused with the residue class field $\kappa(\mu)$ of the valuation $\mu$. Actually, $\kappa(\mu)$ is isomorphic to the field of fractions of $\Delta$ \cite[Prop. 3.9]{Rid}, so that $\F_\mu$ is isomorphic to the algebraic closure of $\F$ in $\kappa(\mu)$ too.

\begin{definition}\label{proper}
A key polynomial $\phi \in\kpm$ is said to be \emph{proper} if $\mu$ admits a MacLane chain such that $\phi\nmid_\mu\phi_r$, where $r$ is the length of the chain and $\phi_r$ is the key polynomial of the last augmentation step.  
\end{definition}

For any MacLane chain of length $r$ of $\mu$, we have \cite[Sec. 5.3]{Rid}:
\begin{equation}\label{em}
\begin{array}{l}
m_r=\mn\left\{\deg\phi\mid\phi \in\kpm\right\}, \\
e_rm_r=\mn\left\{\deg\phi\mid\phi \in\kpm,\ \phi\mbox{ proper}\right\},\\
\phi\in\kpm \mbox{ proper }\sii \deg\phi\ge e_rm_r. 
\end{array}
\end{equation}

Thus, the positive integers $m_\mu:=m_r$, $e_\mu:=e_r$ do not depend on the choice of the MacLane chain either.

\subsection{Independence of the lower levels}
Our first aim is to prove the following result.

\begin{theorem}\label{laststep}
Let $\phi$ be a proper key polynomial for the inductive valuation $\mu$ and consider a MacLane chain of $\mu$ as in (\ref{depth}) with $\phi\nmid_{\mu}\phi_r$. For any $\nu\in\Q_{>0}$  consider the MacLane chain of the augmented valuation $\mu'=[\mu;\phi,\nu]$ obtained by adding one augmentation step:
\begin{equation}\label{extchain}
\mu_0\ \stackrel{\phi_1,\nu_1}\lra\  \mu_1\ \stackrel{\phi_2,\nu_2}\lra\ \cdots
\ \lra\ \mu_{r-1} 
\ \stackrel{\phi_{r},\nu_{r}}\lra\ \mu_{r}=\mu\ \stackrel{\phi,\nu}\lra\ \mu_{r+1}=\mu'
\end{equation}
Then, the elements
$$
p_{r+1}\in\ggmp^*,\quad x_{r+1}\in\ggmp,\quad y_{r+1}\in\Delta(\mu')
$$
and the operators $N_{r+1}$, $R_{r+1}$ attached to this extended MacLane chain do not depend on the initial MacLane chain.
\end{theorem}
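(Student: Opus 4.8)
The plan is to show that each object in the list $p_{r+1}, x_{r+1}, y_{r+1}, N_{r+1}, R_{r+1}$ can be described purely in terms of data intrinsic to $\mu$ (not its chain) together with the pair $(\phi,\nu)$. First I would record the key intrinsic facts: by Proposition \ref{extension}, the augmented valuation $\mu'=[\mu;\phi,\nu]$ depends only on $\mu$ and $(\phi,\nu)$, hence so do the graded algebra $\ggmp$, the ring $\Delta(\mu')$, the canonical homomorphism $\ggm\to\ggmp$, and the map $\hmp$. Likewise the Newton polygon operator $N_{r+1}=N_{\mu,\phi}$ is built only from $\mu$ and $\phi$ (via $\phi$-expansions and the values $\mu(a_s\phi^s)$), so it is immediately chain-independent; this disposes of $N_{r+1}$ at once. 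The real content is therefore the graded-algebra generators and the residual polynomial operator $R_{r+1}=R_{\mu,\phi,\nu}$.

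Next I would handle $p_{r+1}$ and $x_{r+1}$. From the recurrence in item (3) of section \ref{subsecML}, $x_{r+1}=\hmp(\phi)\,p_{r+1}^{-V_{r+1}}$ and $p_{r+1}=x_r^{\ell_r}p_r^{\ell'_r}$, with $\ell_r,\ell'_r$ determined by $\ell_rh_r+\ell'_re_r=1$, $0\le\ell_r<e_r$. The potential dependence on the chain enters through $x_r$, $p_r$, $V_{r+1}$, $h_r$, $e_r$. Now $e_r=e_\mu$ and $h_r=e(\mu)\nu_r$: but $e(\mu)$ is intrinsic, and I claim $e_r$ (equivalently $e_\mu$, already noted to be chain-independent in (\ref{em})) together with $h_r$ are intrinsic, so $\ell_r,\ell'_r$ are too. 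For $x_r$, $p_r$ I would argue that what actually matters is their images in $\gg(\mu')$ under $\gg(\mu)\to\ggmp$; the combination $x_r^{\ell_r}p_r^{\ell'_r}$ landing in $\ggmp$ should be expressible intrinsically. The cleanest route is to use the algebraic relation $x_r^{e_r}=y_rp_r^{h_r}$ together with $\hmp(\phi)$ being intrinsic: from $\hm(\phi_r)=x_rp_r^{V_r}$ and the fact that under $\gg(\mu)\to\ggmp$ the element $\hm(\phi_r)$ maps to $\hmp(\phi_r)$ (since $\phi\nmid_\mu\phi_r$ gives $\mu(\phi_r)=\mu'(\phi_r)$ by Proposition \ref{extension}(2), so $\hmp(\phi_r)\ne0$), one recovers enough. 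But $\phi_r$ itself is chain-dependent, so instead I would pin down the image of $p_r$ in $\ggmp$ directly: $p_r\in\gg(\mu_r)^*=\gg(\mu)^*$ is a homogeneous unit of a specific degree, and in a graded algebra over a field the homogeneous units are determined up to $\Delta(\mu)^*$-scalar by their degree; tracking the normalization through the recurrence (starting from $p_0=\hm[\mu_0](\pi)$) shows the image of $p_r$ in $\ggmp$ is intrinsic, whence $p_{r+1}$ and then $x_{r+1}=\hmp(\phi)p_{r+1}^{-V_{r+1}}$ are intrinsic ($V_{r+1}=e(\mu)\mu(\phi)$ being manifestly intrinsic). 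Then $y_{r+1}=x_{r+1}^{e_{r+1}}p_{r+1}^{-h_{r+1}}$ follows, with $e_{r+1},h_{r+1}$ determined by $\nu$ and the intrinsic $e(\mu),e(\mu')$.

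Finally, for $R_{r+1}=R_{\mu,\phi,\nu}$ I would invoke its defining property (\ref{mainR}): for nonzero $g$, $\hmp(g)=x_{r+1}^{s_{r+1}(g)}p_{r+1}^{u_{r+1}(g)}R_{r+1}(g)(y_{r+1})$, where $(s_{r+1}(g),u_{r+1}(g)/e(\mu))$ is the left endpoint of the $\nu$-component of $N_{r+1}(g)=N_{\mu,\phi}(g)$. Every ingredient on the right — $\hmp(g)$, $x_{r+1}$, $p_{r+1}$, $y_{r+1}$, and the exponents coming from $N_{r+1}(g)$ and $\nu$ — has just been shown intrinsic, and $R_{r+1}(g)$ is \emph{uniquely} determined by this identity (as a polynomial in $\F_{r+1}[y]$ with nonzero constant term), so $R_{r+1}$ is chain-independent. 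I expect the main obstacle to be the rigorous verification that the image of $p_r$ (and hence the whole tail of the generator recurrence) in $\ggmp$ is genuinely intrinsic; the subtlety is that the individual generators $p_i,x_i,y_i$ of the \emph{lower} graded algebras do depend on the chosen key polynomials $\phi_1,\dots,\phi_r$, and one must show all this chain-dependence is killed by the passage to $\ggmp$ — essentially because the extra augmentation step by a \emph{proper} key polynomial $\phi$ with $\phi\nmid_\mu\phi_r$ sees only the intrinsic part $(e_\mu,m_\mu,\Delta(\mu),\F_\mu,\Gamma(\mu))$ of $\mu$, which is exactly the point being used (and which (\ref{em}) makes available). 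Formalizing "homogeneous units of a given degree are determined up to a scalar in $\Delta(\mu)^*$" and checking the normalizations match across two chains is the one spot that needs care rather than routine bookkeeping.
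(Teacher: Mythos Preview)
Your reduction of the problem to the intrinsicality of $p_{r+1}$ is correct, and once $p_{r+1}$ is known to be chain-independent your derivation of $x_{r+1}$, $y_{r+1}$, and $R_{r+1}$ via (\ref{mainR}) goes through. The gap is in the step where you argue that $p_{r+1}$ itself is intrinsic.

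You claim that $h_r$ is intrinsic to $\mu$. This is false. Take a MacLane chain of length $r$ with $e_{r-1}=f_{r-1}=1$; by Lemma \ref{augmentation} one may collapse levels $r-1$ and $r$ to a chain of length $r-1$ with last slope $\nu_{r-1}+\nu_r$, and the new $h$-invariant is $h^*_{r-1}=e(\mu)(\nu_{r-1}+\nu_r)=h_r+e_rh_{r-1}\ne h_r$. Hence $\ell_r,\ell'_r$ are not intrinsic either, and the formula $p_{r+1}=x_r^{\ell_r}p_r^{\ell'_r}$ gives no immediate chain-independent description. Your fallback --- that homogeneous units of a given degree in $\ggmp$ are unique up to a scalar in $\Delta(\mu')^*$, and that ``tracking the normalization through the recurrence'' pins down that scalar --- is precisely the nontrivial content of the theorem, and you do not carry it out; that scalar ambiguity is exactly what could differ between chains.

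The paper resolves this by a two-step reduction rather than a direct intrinsic characterization. First, an explicit computation (Lemma \ref{firstcase} and Corollary \ref{firstproof}) shows that a single optimization step leaves $p_{r+1},x_{r+1},y_{r+1},R_{r+1}$ unchanged, so one may assume the chain of $\mu$ is optimal. Second, two \emph{optimal} chains of $\mu$ share the same intermediate valuations $\mu_i$ and slopes $\nu_i$, hence the same $e_i,h_i,\ell_i,\ell'_i$; the paper then invokes \cite[Lem.\ 4.13]{Rid} to obtain $p_i^*=p_i$ for all $i\le r$ and $x_r^*=x_r$ whenever $e_r>1$, after which a short case analysis on $e_r$ gives $p_{r+1}^*=p_{r+1}$. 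In other words, the chain-independence of the numerical data you want is only available \emph{after} the reduction to optimal chains, and even then the equality $p_r^*=p_r$ is a cited result rather than a consequence of a degree argument.
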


In other words, the generators of $\ggmp$ and the operators $N_{r+1}$, $R_{r+1}$ depend on $\mu,\phi,\nu$, but not on the choice of a MacLane chain of $\mu$.
In particular, we obtain a residual polynomial operator
$$
R_{\mu,\phi,\nu}\colon K[x]\lra \F_{\mu'},
$$
defined as $R_{\mu,\phi,\nu}:=R_{r+1}$, which depends only on $\mu$, $\phi$ and $\nu$.

The proof of Theorem \ref{laststep} requires some previous work.

\begin{lemma}\label{augmentation}
Consider a MacLane chain of augmented valuations
$$
\mu^*\stackrel{\phi^*,\nu^*}\lra \mu\stackrel{\phi,\nu}\lra \mu' 
$$
with $\deg \phi=\deg\phi^*$. Then, $\phi\in\op{KP}(\mu^*)$ and $\mu'=[\mu^*;\phi,\nu^*+\nu]$. 

Further, consider the affine transformation
$$
\hh\colon \R^2\lra \R^2,\quad (x,y)\mapsto (x,y-\nu^*x).
$$ 
Then, $N_{\mu^*,\phi}=\hh\circ N_{\mu,\phi}$.
\end{lemma}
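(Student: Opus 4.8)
The plan is to exploit the rigidity provided by Lemma \ref{unicity}, which characterizes when two augmentations of a fixed valuation coincide. First I would observe that since $\deg\phi=\deg\phi^*$ and $\phi\mid_{\mu}\phi$ trivially (indeed $\phi\smu\phi$), but we actually need the hypothesis that this is a MacLane chain, i.e. $\phi\nmid_{\mu^*}\phi^*$ is NOT what we have — rather the chain condition gives us nothing between $\mu^*$ and $\mu$ beyond $\phi^*$ being a key polynomial. The relevant fact is Lemma \ref{mid=sim}: to conclude $\phi\in\op{KP}(\mu^*)$ it suffices to show $\phi^*\mid_{\mu^*}\phi$, because then $\phi\smu[\mu^*]\phi^*$ and $\phi$ is itself a key polynomial for $\mu^*$. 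To get $\phi^*\mid_{\mu^*}\phi$, I would write the $\phi^*$-expansion $\phi=\phi^*+(\phi-\phi^*)$ with $\deg(\phi-\phi^*)<\deg\phi^*=\deg\phi$, and compare $\mu$-values: since $\phi$ is a key polynomial for $\mu$, in particular $\phi$ is $\mu$-minimal, which forces $\mu(\phi-\phi^*)\ge\mu(\phi^*)$ (otherwise $\phi^*$, of smaller degree, would be $\mu$-divisible into $\phi$... — this needs to be argued carefully, using that $\mu=[\mu^*;\phi^*,\nu^*]$ so $\mu(\phi^*)=\mu^*(\phi^*)+\nu^*$ while $\mu(\phi-\phi^*)=\mu^*(\phi-\phi^*)$). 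Combined with $\mu^*(\phi^*)\le\mu^*(\phi-\phi^*)$ being automatic or derivable, one gets $\op{H}_{\mu^*}(\phi^*)\mid\op{H}_{\mu^*}(\phi)$ from the expansion, hence $\phi^*\mid_{\mu^*}\phi$.

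Next, for the identity $\mu'=[\mu^*;\phi,\nu^*+\nu]$, I would compute both sides on the $\phi$-expansion $g=\sum_s a_s\phi^s$ of an arbitrary $g$. On one hand $\mu'(g)=\mn_s\{\mu(a_s\phi^s)+s\nu\}=\mn_s\{\mu(a_s)+s\mu(\phi)+s\nu\}$ using $\mu$-minimality of $\phi$ (so $\mu(a_s\phi^s)=\mu(a_s)+s\mu(\phi)$). On the other hand, since $\deg a_s<\deg\phi=\deg\phi^*$ and $\phi^*\nmid_{\mu^*}$ anything of smaller degree — wait, rather $\phi$ is $\mu^*$-minimal (just shown, as it's a key polynomial for $\mu^*$), so $[\mu^*;\phi,\nu^*+\nu](g)=\mn_s\{\mu^*(a_s\phi^s)+s(\nu^*+\nu)\}=\mn_s\{\mu^*(a_s)+s\mu^*(\phi)+s\nu^*+s\nu\}$. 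It remains to check $\mu^*(a_s)=\mu(a_s)$ and $\mu^*(\phi)+\nu^*=\mu(\phi)$. The first holds because $\deg a_s<\deg\phi^*$ and $\phi^*\nmid_{\mu^*}a_s$ for degree reasons, so by Proposition \ref{extension}(2) applied to $\mu=[\mu^*;\phi^*,\nu^*]$ we have $\mu^*(a_s)=\mu(a_s)$. The second follows because $\phi\smu[\mu^*]\phi^*$ gives $\mu^*(\phi)=\mu^*(\phi^*)$, and then $\mu(\phi)=\mu(\phi^*)=\mu^*(\phi^*)+\nu^*$ using again $\phi\smu\phi^*$ and the definition of the augmentation $\mu=[\mu^*;\phi^*,\nu^*]$.

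Finally, for the Newton polygon identity $N_{\mu^*,\phi}=\hh\circ N_{\mu,\phi}$, I would argue directly on vertices of the polygons, since both are lower convex hulls. For a fixed $g$ with $\phi$-expansion $\sum_s a_s\phi^s$, the polygon $N_{\mu,\phi}(g)$ is the lower hull of $\{(s,\mu(a_s\phi^s))\}$ and $N_{\mu^*,\phi}(g)$ of $\{(s,\mu^*(a_s\phi^s))\}$. By the computation just made, $\mu(a_s\phi^s)=\mu^*(a_s)+s\mu^*(\phi)+s\nu^*=\mu^*(a_s\phi^s)+s\nu^*$, so the point cloud for $N_{\mu,\phi}(g)$ is obtained from that for $N_{\mu^*,\phi}(g)$ by the shear $(s,y)\mapsto(s,y+\nu^*s)$, which is exactly $\hh^{-1}$. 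Since $\hh$ (and $\hh^{-1}$) is an affine map preserving verticals and the notion of "lower", it commutes with taking lower convex hulls, giving $\hh(N_{\mu,\phi}(g))=N_{\mu^*,\phi}(g)$, i.e. $N_{\mu^*,\phi}=\hh\circ N_{\mu,\phi}$. I expect the main obstacle to be the first part: carefully justifying $\phi^*\mid_{\mu^*}\phi$ from $\mu$-minimality of $\phi$, since this is where the key-polynomial axioms must be used delicately rather than just formally manipulating expansions; once $\phi\in\op{KP}(\mu^*)$ and $\phi\smu[\mu^*]\phi^*$ are secured, the remaining value comparisons are routine applications of Proposition \ref{extension}(2) and the definition of augmented valuations.
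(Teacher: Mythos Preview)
Your plan coincides with the paper's for the Newton-polygon part: both reduce the identity $N_{\mu^*,\phi}=\hh\circ N_{\mu,\phi}$ to the pointwise relation $\mu^*(a_s\phi^s)=\mu(a_s\phi^s)-s\nu^*$, obtained from $\mu(a_s)=\mu^*(a_s)$ (degree reason, Proposition~\ref{extension}(2)) together with $\mu(\phi)=\mu^*(\phi)+\nu^*$. For the first assertion the paper simply cites \cite[Lem.~3.4]{Rid}, whereas you reconstruct it; your route via $\phi\sim_{\mu^*}\phi^*$ and Lemma~\ref{mid=sim} is the natural one and is correct.

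One slip to fix. Near the end of your second paragraph you justify ``$\mu(\phi)=\mu(\phi^*)$ using again $\phi\smu\phi^*$''. If this means $\phi\sim_\mu\phi^*$, it is false here: the MacLane-chain hypothesis is precisely $\phi\nmid_\mu\phi^*$. If it means $\phi\sim_{\mu^*}\phi^*$, that only gives $\mu^*(\phi)=\mu^*(\phi^*)$, not equality of $\mu$-values. The correct argument uses ingredients you already have: both $\phi$ and $\phi^*$ are $\mu$-minimal (the latter by Proposition~\ref{extension}(3)), so writing $a=\phi-\phi^*$ and applying \cite[Lem.~1.3]{Rid} in each direction gives $\mu(\phi^*)=\min(\mu(\phi),\mu(a))$ and $\mu(\phi)=\min(\mu(\phi^*),\mu(a))$, whence $\mu(\phi)=\mu(\phi^*)\le\mu(a)$. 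Then $\mu^*(a)=\mu(a)\ge\mu(\phi^*)=\mu^*(\phi^*)+\nu^*$, so from the $\phi^*$-expansion $\phi=\phi^*+a$ and the definition of $\mu=[\mu^*;\phi^*,\nu^*]$ one reads off $\mu(\phi)=\mu^*(\phi^*)+\nu^*=\mu^*(\phi)+\nu^*$, as needed. The paper reaches the same conclusion by a slightly different path: it invokes the chain hypothesis $\phi\nmid_\mu\phi^*$ and Lemma~\ref{mid=sim} to get $\phi^*\nmid_\mu\phi$, and then \cite[Lem.~1.3]{Rid} yields the sharper equality $\mu(a)=\mu(\phi)=\mu(\phi^*)$.
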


\begin{proof}
The first statement is just \cite[Lem. 3.4]{Rid}. For the comparison between  $N_{\mu^*,\phi}$ and $N_{\mu,\phi}$, consider the $\phi$-expansion $g=\sum_{0\le s}a_s\phi^s$ of a nonzero $g\in K[x]$. By the definition of the augmented valuation $\mu=[\mu^*;\phi^*,\nu^*]$,
$$
\deg a_s<\deg \phi=\deg\phi^*\imp \mu(a_s)=\mu^*(a_s). 
$$
On the other hand, $\phi^*=\phi+a$ for some $a\in K[x]$ with $\deg a<\deg \phi$. By hypothesis, $\phi\nmid_{\mu}\phi^*$, and this implies $\phi^*\nmid_{\mu}\phi$ by Lemma \ref{mid=sim}. Since $\phi$ and $ \phi^*$ are both $\mu$-minimal, \cite[Lem. 1.3]{Rid} shows that
$$
\mu(\phi)=\mu(a)=\mu(\phi^*)=\mu^*(\phi^*)+\nu^*. 
$$ 
Since $\mu^*(a)=\mu(a)>\mu^*(\phi^*)$, we deduce that $\mu^*(\phi)=\mu^*(\phi^*)=\mu(\phi)-\nu^*$. Thus, for each $s\ge0$ we have $\mu^*(a_s\phi^s)=\mu(a_s\phi^s)-s\nu^*$, or equivalently, $\hh(s,\mu(a_s\phi^s))=(s,\mu^*(a_s\phi^s))$.
\end{proof}

\begin{figure}
\caption{Comparison of Newton polygons of $g\in K[x]$}\label{figNcomparison}
\begin{center}
\setlength{\unitlength}{4mm}
\begin{picture}(14,13)
\put(-.25,10.8){$\bullet$}\put(1.8,6.75){$\bullet$}\put(1.8,8.75){$\bullet$}
\put(4.8,2.75){$\bullet$}\put(4.8,7.75){$\bullet$}
\put(6.8,.75){$\bullet$}\put(6.8,7.75){$\bullet$}
\put(-1,0){\line(1,0){11}}\put(0,-1){\line(0,1){13}}
\put(0,11){\line(1,-1){2}}\put(0,11){\line(1,-2){2}}
\put(0,11.02){\line(1,-1){2}}\put(0,11.02){\line(1,-2){2}}
\put(2,9.02){\line(3,-1){3}}\put(2,7.02){\line(3,-4){3}}
\put(2,9){\line(3,-1){3}}\put(2,7){\line(3,-4){3}}
\put(5,8){\line(1,0){2}}\put(5,3){\line(1,-1){2}}
\put(5,8.02){\line(1,0){2}}\put(5,3.02){\line(1,-1){2}}
\put(5,7.5){\vector(0,1){.45}}\put(5,3.5){\vector(0,-1){.45}}
\multiput(2,-.1)(0,.25){36}{\vrule height2pt}
\multiput(5,-.1)(0,.25){33}{\vrule height2pt}
\multiput(7,-.1)(0,.25){33}{\vrule height2pt}
\put(4.8,-.7){\begin{footnotesize}$s$\end{footnotesize}}
\put(5.2,5.2){\begin{footnotesize}$s\nu^*$\end{footnotesize}}
\put(8,7.8){\begin{footnotesize}$N_{\mu,\phi}(g)$\end{footnotesize}}
\put(8,.8){\begin{footnotesize}$N_{\mu^*,\phi}(g)$\end{footnotesize}}
\put(-.5,-.7){\begin{footnotesize}$0$\end{footnotesize}}
\end{picture}
\end{center}
\end{figure}
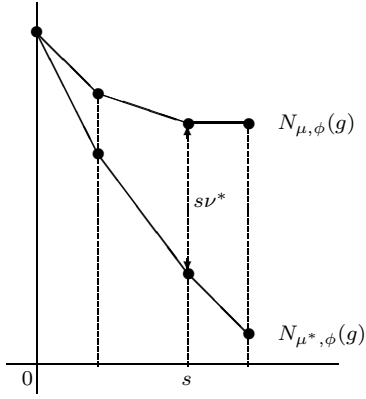

The affinity $\hh$ acts as a translation on every vertical line and it keeps the vertical axis pointwise invariant. Thus, a side $S$ of slope $\rho$ of $N_{\mu,\phi}(g)$ is mapped to a side of slope $\rho-\nu^*$ of  $N_{\mu^*,\phi}(g)$, whose end points have the same abscissas as those of $S$ (see Figure \ref{figNcomparison}). 

Let us now consider a very particular instance of Theorem \ref{laststep}. With the notation of that theorem, suppose that $r\ge 2$ and $\deg \phi_{r-1}=\deg\phi_r$, or equivalently, $e_{r-1}=f_{r-1}=1$. In this case, Lemma \ref{augmentation} shows that 
$\phi_r$ is a key polynomial for $\mu_{r-2}$ and $\mu=\mu_r=[\mu_{r-2};\phi_r,\nu_{r-1}+\nu_r]$ can be obtained as a simple augmentation of $\mu_{r-2}$.

Thus, we may consider two different MacLane chains of $\mu$:
$$
\begin{array}{l}
\mu_0\ \stackrel{\phi_1,\nu_1}\lra\  \mu_1\ \stackrel{\phi_2,\nu_2}\lra\ \cdots\ \lra \ \mu_{r-2}
\ \stackrel{\phi_{r-1},\nu_{r-1}}\lra\ \mu_{r-1} 
\ \stackrel{\phi_{r},\nu_{r}}\lra\ \mu_{r}=\mu\\
\mu^*_0\ \stackrel{\phi^*_1,\nu^*_1}\lra\  \mu^*_1\ \stackrel{\phi^*_2,\nu^*_2}\lra\ \cdots\ \lra \ \mu^*_{r-2}
\ \stackrel{\phi^*_{r-1},\nu^*_{r-1}}\lra\ \mu^*_{r-1}=\mu 
\end{array}
$$
where $\phi^*_{r-1}=\phi_r$, $\nu^*_{r-1}=\nu_{r-1}+\nu_r$, and
$$
\mu^*_i=\mu_i,\quad \phi^*_i=\phi_i,\quad \nu^*_i=\nu_i,\quad 0\le i\le r-2.
$$
We use the standard notation for all data and operators attached to the upper MacLane chain and we mark with a superscript $(\ )^*$ all data and operators attached to the lower one.

\begin{lemma}\label{firstcase}
With the above notation, let $\hh(x,y)=(x,y-\nu_{r-1}x)$.
\begin{enumerate}
\item $p^*_{r-1}=p_r$, \quad$x^*_{r-1}=x_rp_r^{h_{r-1}}$, \quad$y^*_{r-1}=y_r$.  
\item $N^*_{r-1}=\hh\circ N_r$, \quad$R^*_{r-1}=R_r$. 
\end{enumerate}
\end{lemma}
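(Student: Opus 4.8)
The plan is to reduce everything to the single additional augmentation step $\mu_{r-2}\stackrel{\phi_r,\nu_{r-1}}{\lra}\mu$ in the lower chain, versus the two steps $\mu_{r-2}\stackrel{\phi_{r-1},\nu_{r-1}}{\lra}\mu_{r-1}\stackrel{\phi_r,\nu_r}{\lra}\mu$ in the upper chain, and then simply compare the recursive definitions of the generators and operators. Since $\mu^*_i=\mu_i$ for $i\le r-2$, all data at levels $\le r-2$ coincide: in particular $e(\mu^*_{r-2})=e(\mu_{r-2})=e(\mu_{r-1})$ (because $e_{r-1}=1$), $\Delta^*_{r-2}=\Delta_{r-2}$, $\F^*_{r-1}=\F_{r-1}=\F_r$ (because $f_{r-1}=1$), $p^*_{r-1}=p_r$ (both equal $x_{r-2}^{\ell_{r-2}}p_{r-2}^{\ell'_{r-2}}$, since $\ell_{r-1},\ell'_{r-1}$ are forced to be $0,1$ when $e_{r-1}=1$), and $y^*_{r-1}=y^*_{r-2}$-type identifications persist. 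This establishes the $p$-part of (1) for free.

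Next I would handle the Newton polygon comparison $N^*_{r-1}=\hh\circ N_r$. This is exactly Lemma \ref{augmentation} applied to the chain $\mu_{r-2}\stackrel{\phi_{r-1},\nu_{r-1}}{\lra}\mu_{r-1}\stackrel{\phi_r,\nu_r}{\lra}\mu_r$ with $\deg\phi_{r-1}=\deg\phi_r$ and the augmentation datum $\nu^*=\nu_{r-1}$: that lemma gives $N_{\mu_{r-2},\phi_r}=\hh\circ N_{\mu_{r-1},\phi_r}$, which is precisely $N^*_{r-1}=\hh\circ N_r$ since $N_r=N_{\mu_{r-1},\phi_r}$ and $N^*_{r-1}=N_{\mu^*_{r-2},\phi^*_{r-1}}=N_{\mu_{r-2},\phi_r}$. (One should check $\phi_r\nmid_{\mu_{r-1}}\phi_{r-1}$, which is part of the MacLane-chain condition in (\ref{depth}).) The affinity $\hh$ leaves abscissas unchanged and subtracts $\nu_{r-1}x$ from ordinates; since the value $\nu^*_{r-1}=\nu_{r-1}+\nu_r$ used to cut the $\nu^*_{r-1}$-component of $N^*_{r-1}$ corresponds, after applying $\hh^{-1}$, to the line of slope $-\nu_r$ cutting $N_r$, the $\nu^*_{r-1}$-component $S_{\nu^*_{r-1}}(g)$ of $N^*_{r-1}(g)$ has the same left and right abscissas $s_r(g), s'_r(g)$ as the $\nu_r$-component of $N_r(g)$.

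Now for $x^*_{r-1}$ and $R^*_{r-1}=R_r$. By definition $x^*_{r-1}=\op{H}_{\mu}(\phi_r)(p^*_{r-1})^{-V^*_{r-1}}$ and $x_r=\op{H}_{\mu}(\phi_r)p_r^{-V_r}$; using $p^*_{r-1}=p_r$, the relation $x^*_{r-1}=x_rp_r^{h_{r-1}}$ is equivalent to $V^*_{r-1}=V_r-h_{r-1}$. Computing from the recurrence $V_i=e_{i-1}f_{i-1}(e_{i-1}V_{i-1}+h_{i-1})$: since $e_{r-1}=f_{r-1}=1$ one gets $V_r=V_{r-1}+h_{r-1}$, and on the other hand $V^*_{r-1}=e_{r-2}f_{r-2}(e_{r-2}V_{r-2}+h_{r-2})=V_{r-1}$ (same formula that computes $V_{r-1}$ in the upper chain), so $V^*_{r-1}=V_r-h_{r-1}$ as needed. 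For $y^*_{r-1}$: $y^*_{r-1}=(x^*_{r-1})^{e^*_{r-1}}(p^*_{r-1})^{-h^*_{r-1}}$ with $e^*_{r-1}=e(\mu)/e(\mu_{r-2})=e_{r-1}e_r=e_r$ and $h^*_{r-1}=e(\mu)\nu^*_{r-1}=e(\mu)(\nu_{r-1}+\nu_r)=e_rh_{r-1}+h_r$ (using $e(\mu_{r-1})=e(\mu_{r-2})$); substituting $x^*_{r-1}=x_rp_r^{h_{r-1}}$ and simplifying the exponent of $p_r$ gives $y^*_{r-1}=x_r^{e_r}p_r^{-h_r}=y_r$. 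Finally $R^*_{r-1}=R_r$ follows by comparing the defining identity (\ref{mainR}): $\op{H}_\mu(g)=(x^*_{r-1})^{s^*_{r-1}(g)}(p^*_{r-1})^{u^*_{r-1}(g)}R^*_{r-1}(g)(y^*_{r-1})$ against $\op{H}_\mu(g)=x_r^{s_r(g)}p_r^{u_r(g)}R_r(g)(y_r)$; the abscissa identity from the Newton-polygon step gives $s^*_{r-1}(g)=s_r(g)$, the identifications $x^*_{r-1}=x_rp_r^{h_{r-1}}$, $y^*_{r-1}=y_r$ and the fact that $\F_r[y]$-coefficients are determined uniquely (both $R_r(g)$ and $R^*_{r-1}(g)$ lie in the same ring $\F_r[y]$) force $R^*_{r-1}(g)=R_r(g)$ up to the discrepancy in the $p_r$-exponent, which is absorbed into $u^*_{r-1}(g)$ and does not affect the polynomial. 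The main obstacle I anticipate is the bookkeeping of ramification indices and the exact powers of $p_r$ appearing in these substitutions — i.e. verifying $V^*_{r-1}=V_r-h_{r-1}$ and the $p_r$-exponent cancellation in the $y$-identity — rather than any conceptual difficulty; everything else is a direct unwinding of the definitions together with Lemma \ref{augmentation} and the recurrences (\ref{recurrence}).
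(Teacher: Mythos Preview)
Your proposal is correct and follows essentially the same approach as the paper: both arguments compute $p^*_{r-1}=p_r$ from $e_{r-1}=1$ forcing $\ell_{r-1}=0,\ell'_{r-1}=1$, obtain $N^*_{r-1}=\hh\circ N_r$ from Lemma~\ref{augmentation}, verify $V^*_{r-1}=V_{r-1}=V_r-h_{r-1}$ and $h^*_{r-1}=h_r+e_rh_{r-1}$ via the recurrences to get $x^*_{r-1}=x_rp_r^{h_{r-1}}$ and $y^*_{r-1}=y_r$, and then compare the two instances of (\ref{mainR}). The only place where you are slightly vague is the final cancellation: the paper makes explicit that $u^*_{r-1}(g)=u_r(g)-s_r(g)h_{r-1}$ (from the affinity applied to the left endpoint), so that $(x^*_{r-1})^{s_r(g)}(p^*_{r-1})^{u^*_{r-1}(g)}=x_r^{s_r(g)}p_r^{u_r(g)}$ exactly, and then invokes the transcendence of $y_r$ over $\F_r$ to conclude $R^*_{r-1}(g)=R_r(g)$ as polynomials --- this is the precise content of what you call ``absorbed into $u^*_{r-1}(g)$''.
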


\begin{proof}
The generators of the graded algebra of $\mu$ were defined as follows:
$$
\as{1.4}
\begin{array}{ll}
p_r=x_{r-1}^{\ell_{r-1}}p_{r-1}^{\ell'_{r-1}},&p^*_{r-1}=p_{r-1}=x_{r-2}^{\ell_{r-2}}p_{r-2}^{\ell'_{r-2}},\\
x_r=\hm(\phi_r)p_r^{-V_r},&x^*_{r-1}=\hm(\phi_r)(p^*_{r-1})^{-V^*_{r-1}},\\
y_r=x_r^{e_r}p_r^{-h_r},&y^*_{r-1}=(x^*_{r-1})^{e^*_{r-1}}(p^*_{r-1})^{-h^*_{r-1}}.
\end{array}
$$
By hypothesis, $e_{r-1}=1$, so that $\ell_{r-1}=0$, $\ell'_{r-1}=1$; hence, $p_r=p_{r-1}=p^*_{r-1}$. On the other hand, the recurrences (\ref{recurrence}) show that
$$
\begin{array}{lclcl}
V_r&=&e_{r-1}f_{r-1}(e_{r-1}V_{r-1}+h_{r-1})&=&V_{r-1}+h_{r-1},\\
V^*_{r-1}&=&e^*_{r-2}f^*_{r-2}(e^*_{r-2}V^*_{r-2}+h^*_{r-2})&=&V_{r-1},
\end{array}
$$ 
because for levels $i<r-1$ the data of the two MacLane chains coincide.
Hence, 
$$x_r=\hm(\phi_r)p_r^{-V_r}=\hm(\phi_r)p_r^{-V_{r-1}-h_{r-1}}=x^*_{r-1}p_r^{-h_{r-1}}.
$$
As mentioned at the beginning of the section, $e^*_{r-1}=e_\mu=e_r$. Hence, from the equalities:
$$h^*_{r-1}/(e^*_1\cdots e^*_{r-1})=\nu^*_{r-1}=\nu_{r-1}+\nu_r=h_r/(e_1\cdots e_r)+h_{r-1}/(e_1\cdots e_{r-1}),
$$we deduce $h^*_{r-1}=h_r+e_rh_{r-1}$. Therefore,
$$
y^*_{r-1}=(x^*_{r-1})^{e_r}(p^*_{r-1})^{-h^*_{r-1}}=x_r^{e_r}p_r^{e_rh_{r-1}}p_r^{-h_r-e_rh_{r-1}}=x_r^{e_r}p_r^{-h_r}=y_r.
$$
This ends the proof of (1).

By Lemma \ref{augmentation}, we have
$$
N^*_{r-1}=N_{\mu^*_{r-2},\phi^*_{r-1}}=N_{\mu_{r-2},\phi_r}=\hh\circ N_{\mu_{r-1},\phi_r}=\hh\circ N_r.
$$

For any nonzero $g\in K[x]$, the affinity $\hh$ sends the $\nu_r$-component of $N_r(g)$ to the $\nu^*_{r-1}$-component of $N^*_{r-1}(g)$; hence,
$$
\hh(s_r(g),u_r(g)/e(\mu_{r-1}))=(s^*_{r-1}(g),u^*_{r-1}(g)/e(\mu^*_{r-2})).
$$
Having in mind that $e(\mu_{r-1})=e(\mu_{r-2})=e(\mu^*_{r-2})$, this shows that
\begin{equation}\label{claim}
s_r(g)=s^*_{r-1}(g),\qquad u_r(g)=u^*_{r-1}(g)+s_r(g)h_{r-1}. 
\end{equation}

Now, (\ref{mainR}) shows that
$$
x_r^{s_r(g)}p_r^{u_r(g)}R_r(g)(y_r)=H_{\mu}(g)=(x^*_{r-1})^{s^*_{r-1}(g)}(p^*_{r-1})^{u^*_{r-1}(g)}R^*_{r-1}(g)(y^*_{r-1}).
$$
From the identities in (\ref{claim}) and $x_r=x^*_{r-1}p_r^{-h_{r-1}}$ we deduce:
\begin{equation*}
\begin{split}
x_r^{s_r(g)}p_r^{u_r(g)}=&(x^*_{r-1})^{s_r(g)}p_r^{-s_r(g)h_{r-1}}p_r^{u^*_{r-1}(g)+s_r(g)h_{r-1}}\\=&
(x^*_{r-1})^{s^*_{r-1}(g)}(p^*_{r-1})^{u^*_{r-1}(g)}.
\end{split}
\end{equation*}

Therefore, $R_r(g)(y_r)=R^*_{r-1}(g)(y^*_{r-1})=R^*_{r-1}(g)(y_r)$ and this implies $R_r(g)=R^*_{r-1}(g)$ because $y_r$ is transcendental over $\F_r$ \cite[Thm. 4.3]{Rid}. This ends the proof of (2).
\end{proof}

These computations prove Theorem \ref{laststep} in this particular situation.

\begin{corollary}\label{firstproof}
With the above notation, let $\phi$ be a proper key polynomial for $\mu$ such that $\phi\nmid_\mu \phi_r$ and consider the augmented valuation $\mu'=[\mu;\phi,\nu]$. Then, the generators of $\ggmp$ and the operators $N_{\mu,\phi}$, $R_{\mu,\phi,\nu}$ attached to the following MacLane chains coincide. 
$$
\begin{array}{l}
\mu_0\ \stackrel{\phi_1,\nu_1}\lra\  \mu_1\ \stackrel{\phi_2,\nu_2}\lra\ \cdots\  
\ \stackrel{\phi_{r},\nu_{r}}\lra\ \mu_{r}=\mu\ \stackrel{\phi,\nu}\lra\ \mu_{r+1}=\mu'\\
\mu^*_0\ \stackrel{\phi^*_1,\nu^*_1}\lra\  \mu^*_1\ \stackrel{\phi^*_2,\nu^*_2}\lra\ \cdots\ 
\ \stackrel{\phi^*_{r-1},\nu^*_{r-1}}\lra\ \mu^*_{r-1}=\mu\ \stackrel{\phi,\nu}\lra\ \mu^*_r=\mu' 
\end{array}
$$
\end{corollary}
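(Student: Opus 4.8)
\textbf{Proof plan for Corollary \ref{firstproof}.}

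The plan is to reduce the comparison of the two displayed MacLane chains of $\mu'$ to the already-established particular case in Lemma \ref{firstcase}, by interpolating between them via a sequence of ``elementary'' moves, each of which replaces a length-$r$ chain of $\mu$ by a length-$(r-1)$ chain obtained by collapsing two consecutive steps of equal degree. The first observation is that the hypothesis $\phi\nmid_\mu\phi_r$ together with $\phi$ being proper (so $\deg\phi\ge e_\mu m_\mu$) is exactly what is needed to append the step $\mu\stackrel{\phi,\nu}\lra\mu'$ to \emph{any} MacLane chain of $\mu$; and by \eqref{em} the integers $m_\mu, e_\mu$ — hence the property of $\phi$ being proper and the validity of $\phi\nmid_\mu\phi_r$ — do not depend on the chosen chain of $\mu$. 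So both chains in the statement are legitimate MacLane chains of $\mu'$.

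Next I would argue that, up to the finitely many collapsing moves of Lemma \ref{firstcase}, there is essentially one ``shortest'' MacLane chain of $\mu$: whenever a chain of $\mu$ has two consecutive steps with $\deg\phi_{i-1}=\deg\phi_i$ (equivalently $e_{i-1}=f_{i-1}=1$) one may, by Lemma \ref{augmentation}, merge them into a single step $[\mu_{i-2};\phi_i,\nu_{i-1}+\nu_i]$, strictly shortening the chain without changing $\mu$ or any of the data at levels $>i$. Iterating, every chain of $\mu$ reduces to one with all $m_1<\cdots<m_r$ strictly increasing; and such a strictly-increasing chain is the ``optimal'' chain, whose top data are what we want to identify. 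The point of Lemma \ref{firstcase} is precisely that one such collapse, performed at the top level $r$, leaves $p_{r}, x_r$ up to a power of $p_r$, leaves $y_r$ unchanged, and leaves $N_r$ (up to the affinity $\hh$) and $R_r$ unchanged; and since the step $\mu\stackrel{\phi,\nu}\lra\mu'$ is appended \emph{after} level $r$, the recurrences defining $p_{r+1}, x_{r+1}, y_{r+1}, N_{r+1}, R_{r+1}$ in terms of the level-$r$ data are unaffected — one checks directly from the recurrent formulas in \textbf{(3)}, \textbf{(4)}, \textbf{(5)} of section \ref{secML} that multiplying $p_r$ and $x_r$ by compensating powers of $p_r$, and replacing $V_r$ accordingly, does not alter $p_{r+1}, x_{r+1}=\hm[\mu'](\phi)p_{r+1}^{-V_{r+1}}, y_{r+1}$, nor the Newton polygon $N_{\mu,\phi}$ (which depends only on $\mu$ and $\phi$), nor $R_{\mu,\phi,\nu}=R_{r+1}$.

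To finish I would apply this to the two specific chains in the statement. The first differs from the ``optimal'' strictly-increasing chain of $\mu$ by finitely many top-level collapses; the second, of length $r-1$, is obtained from the first by exactly one such collapse at level $r$ (this is the setup of Lemma \ref{firstcase}, with $\phi^*_{r-1}=\phi_r$, $\nu^*_{r-1}=\nu_{r-1}+\nu_r$). Appending $\mu\stackrel{\phi,\nu}\lra\mu'$ to both and invoking Lemma \ref{firstcase}(1)–(2) for the level-$r$ data, together with the observation above that the level-$(r+1)$ recurrences are insensitive to the allowed change in the level-$r$ generators, yields $p_{r+1}=p^*_r$, $x_{r+1}=x^*_r$, $y_{r+1}=y^*_r$ and $N_{\mu,\phi}=N^*_r$, $R_{\mu,\phi,\nu}=R^*_r$. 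The main obstacle, and the part deserving the most care, is the bookkeeping in this last step: one must verify that the \emph{only} discrepancy introduced by a top collapse is the harmless rescaling $p_r\leftrightarrow p^*_{r-1}$, $x_r\leftrightarrow x^*_{r-1}p_r^{\pm h_{r-1}}$ recorded in Lemma \ref{firstcase}(1), and that this rescaling propagates trivially through the definitions of the appended level — i.e. that $\hm[\mu'](\phi)$, $V_{r+1}$, $h_{r+1}$, $e_{r+1}$ are computed identically from either chain — so that no hidden dependence on the lower levels survives.
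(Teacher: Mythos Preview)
Your approach is essentially the paper's: invoke Lemma \ref{firstcase} for the level-$r$ data of the two chains of $\mu$, then push the identities through the recurrences defining the appended level-$(r+1)$ data. Two remarks. First, the discussion of iterated collapses down to an optimal chain is extraneous for \emph{this} corollary, which concerns precisely the single-collapse situation of Lemma \ref{firstcase}; the iteration you sketch is the argument for the surrounding Theorem \ref{laststep}, which \emph{uses} this corollary as its building block. Second, the step you flag as ``propagates trivially'' is exactly where the paper does the explicit work: from $e^*_{r-1}=e_r$ and $h^*_{r-1}=h_r+e_rh_{r-1}$ (obtained in the proof of Lemma \ref{firstcase}) one deduces the B\'ezout relations $\ell^*_{r-1}=\ell_r$ and $(\ell')^*_{r-1}=\ell'_r-\ell_rh_{r-1}$, and then
\[
p^*_r=(x^*_{r-1})^{\ell^*_{r-1}}(p^*_{r-1})^{(\ell')^*_{r-1}}
=(x_rp_r^{h_{r-1}})^{\ell_r}\,p_r^{\ell'_r-\ell_rh_{r-1}}
=x_r^{\ell_r}p_r^{\ell'_r}=p_{r+1},
\]
after which $x^*_r=x_{r+1}$, $y^*_r=y_{r+1}$, $N^*_r=N_{r+1}$, $R^*_r=R_{r+1}$ follow immediately as you outline.
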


\begin{proof}
Let us compare the B\'ezout identities:
$$
\begin{array}{cl}
\ell_rh_r+\ell'_re_r=1,&\quad 0\le \ell_r<e_r,\\
\ell^*_{r-1}h^*_{r-1}+(\ell')^*_{r-1}e^*_{r-1}=1,&\quad 0\le \ell^*_{r-1}<e^*_{r-1}.
\end{array}
$$
From the identities $e^*_{r-1}=e_r$, $h^*_{r-1}=h_r+e_rh_{r-1}$, obtained during the proof of Lemma \ref{firstcase}, one deduces easily:
$$
\ell^*_{r-1}=\ell_r,\qquad (\ell')^*_{r-1}=\ell'_r-\ell_rh_{r-1}. 
$$

Let us denote $\phi=\phi_{r+1}=\phi^*_r$, $\nu=\nu_{r+1}=\nu^*_r$. Note that $e^*_r=e_{r+1}$
and  $h^*_r=h_{r+1}$. Hence, the identities of Lemma \ref{firstcase} show that
$$
p^*_r=(x^*_{r-1})^{\ell^*_{r-1}}(p^*_{r-1})^{(\ell')^*_{r-1}}=x_r^{\ell_r}p_r^{\ell_rh_{r-1}}p_r^{\ell'_r-\ell_rh_{r-1}}=p_{r+1}.
$$
Also, from $V_{r+1}=e(\mu)\mu(\phi)=V^*_r$, we deduce
$$
\begin{array}{l}
x^*_r=\op{H}_{\mu'}(\phi)(p^*_r)^{-V^*_r}=\op{H}_{\mu'}(\phi)p_{r+1}^{-V_{r+1}}=x_{r+1},\\
y^*_r=(x^*_r)^{e^*_r}(p^*_r)^{-h^*_r}=x_{r+1}^{e_{r+1}}p_{r+1}^{-h_{r+1}}=y_{r+1}.
\end{array}
$$
Thus, the generators of $\ggmp$ are the same for both MacLane chains of $\mu$.

On the other hand, $N^*_r=N_{\mu,\phi}=N_{r+1}$ depends only on $\mu,\phi$ by definition. In particular, for any nonzero $g\in K[x]$ we have
$$
s_{r+1}(g)=s^*_r(g),\qquad u_{r+1}(g)=u^*_r(g).
$$
This implies $R_{r+1}(g)=R^*_r(g)$ as in the proof of Lemma \ref{firstcase}. In fact, (\ref{mainR}) shows that
$$
x_{r+1}^{s_{r+1}(g)}p_{r+1}^{u_{r+1}(g)}R_{r+1}(g)(y_{r+1})=\op{H}_{\mu'}(g)=(x^*_r)^{s^*_r(g)}(p^*_r)^{u^*_r(g)}R^*_r(g)(y^*_r),
$$   
so that $R^*_r(g)(y^*_r)=R_{r+1}(g)(y_{r+1})=R_{r+1}(g)(y^*_r)$, which implies $R^*_r(g)=R_{r+1}(g)$ by the transcendence of $y^*_r$.
\end{proof}

\begin{definition}\label{optimal}\mbox{\null}

A MacLane chain of length $r$ is  \emph{optimal} if $\deg\phi_1<\cdots<\deg\phi_r$.
\end{definition}

By an iterative application of Lemma \ref{augmentation}, we may convert any MacLane chain of $\mu$ into an optimal MacLane chain. In fact, whenever we find an augmentation step with $\deg\phi_{i-1}=\deg\phi_i$, we may collapse this step to get a shorter MacLane chain. Let us call this ``shrinking" procedure an \emph{optimization step}. 

In an optimization step, all data of levels $0,\,1,\,\dots,\,i-1$ of the MacLane chain remain unchanged; 
the data of level $i-1$ are lost and the data of the $i$-th level change as indicated in Lemma \ref{firstcase}. By Corollary \ref{firstproof}, the data of levels $i+1,\dots,r$ remain unchanged too.

Let us now go back to the general situation of Theorem \ref{laststep}. We have a MacLane chain of length $r$ of $\mu$ such that  $\phi\nmid_\mu\phi_r$ and we extend it to a MacLane chain (\ref{extchain}) of the  augmented valuation $\mu'=[\mu;\phi,\nu]$. By applying a finite number of optimization steps to the MacLane chain of $\mu$, we may convert it into an optimal MacLane chain
$$
\mu^*_0\ \stackrel{\phi^*_1,\nu^*_1}\lra\  \mu^*_1\ \stackrel{\phi^*_2,\nu^*_2}\lra\ \cdots\  
\ \stackrel{\phi^*_{r^*},\nu^*_{r^*}}\lra\ \mu^*_{r^*}=\mu
$$
Since the polynomial $\phi^*_{r^*}=\phi_r$ remains unchanged, we may extend this chain as well to a MacLane chain of $\mu'$:
$$
\mu^*_0\ \stackrel{\phi^*_1,\nu^*_1}\lra\  \mu^*_1\ \stackrel{\phi^*_2,\nu^*_2}\lra\ \cdots\  
\ \stackrel{\phi^*_{r^*},\nu^*_{r^*}}\lra\ \mu^*_{r^*}=\mu\ \stackrel{\phi,\nu}\lra\ \mu_{r^*+1}=\mu' 
$$ 
By an iterative application of Corollary \ref{firstproof}, all data and operators attached to $\mu'$ by this extension of an optimal chain coincide with the data and operators attached to $\mu'$ through the original extended chain (\ref{extchain}).   
Therefore, in order to prove Theorem \ref{laststep}, we need only to compare the data attached to $\mu'$ through the MacLane chains obtained by extending two different optimal MacLane chains of $\mu$.   

Now, two optimal MacLane chains of the same valuation $\mu$ have the same length $r$, the same intermediate valuations $\mu_1,\dots,\mu_{r-1}$ and the same slopes $\nu_1,\dots,\nu_r$ \cite[Prop. 3.6]{Rid}.
Also, by Lemma \ref{unicity},
two families $\phi_1,\dots,\phi_r$ and $\phi^*_1,\dots,\phi^*_r$ are the key polynomials of two optimal MacLane chains of $\mu$ if and only if 
\begin{equation}\label{optimicity}
\deg \phi_i=\deg \phi^*_i,\quad\ \mu_i(\phi_i-\phi^*_i)\ge\mu_i(\phi_i),\quad 1\le i\le r.
\end{equation}
These polynomials satisfy $\phi^*_i\sim_{\mu_{i-1}}\phi_i$, but not necessarily $\phi^*_i\sim_{\mu_i}\phi_i$.
\bigskip

\noindent{\bf Proof of Theorem \ref{laststep} }
As mentioned above, we may assume that we deal with two MacLane chains of $\mu'$ which have been obtained by adding the augmentation step $\mu'=[\mu;\phi,\nu]$ to two optimal MacLane chains of $\mu$: 
$$
\as{.6}
\mu_0\ \begin{array}{c}\phi_1,\nu_1\\\lra\\\lra\\\phi^*_1,\nu_1\end{array}\  \mu_1\ \begin{array}{c}\phi_2,\nu_2\\\lra\\\lra\\\phi^*_2,\nu_2\end{array}\ \cdots
\ \begin{array}{c}\lra\\\lra\end{array}\ \mu_{r-1} 
\ \begin{array}{c}\phi_r,\nu_r\\\lra\\\lra\\\phi^*_r,\nu_r\end{array}\ \mu_r=\mu\ \stackrel{\phi,\nu}\lra \ \mu_{r+1}=\mu'
$$
The key polynomials of both MacLane chains satisfy (\ref{optimicity}). By hypothesis, $\phi\nmid_\mu\phi_r$ and $\phi\nmid_\mu\phi^*_r$. As usual, we mark with a superscript $(\ )^*$ all data and operators attached to the lower MacLane chain.

Note that $V_{r+1}=e(\mu)\mu(\phi)=V^*_{r+1}$. Hence, the numerical data
$$
h_i,\ e_i,\ \nu_i,\ V_i,\ \ell_i,\ \ell'_i,\ \quad 0\le i\le r+1
$$
coincide for both chains. By \cite[Lem. 4.13]{Rid}, we have
$$
\begin{array}{ll}
p^*_i=p_i,&\mbox{ for all } 1\le i\le r,\\
x^*_i=x_i,&\mbox{ for all } 1\le i\le r \mbox{ such that }e_i>1.
\end{array}
$$
Now, if $e_r>1$ we have $x^*_r=x_r$ and 
$$
p^*_{r+1}=(x^*_r)^{\ell_r}(p^*_r)^{\ell'_r}=x_r^{\ell_r}p_r^{\ell'_r}=p_{r+1}.
$$
If $e_r=1$ we have $\ell_r=0$, $\ell'_r=1$ and this leads to the same conclusion:
$$
p^*_{r+1}=(x^*_r)^{\ell_r}(p^*_r)^{\ell'_r}=p^*_r=p_r=x_r^{\ell_r}p_r^{\ell'_r}=p_{r+1}.
$$
As a consequence,
$$
\begin{array}{l}
x^*_{r+1}=\op{H}_{\mu'}(\phi)(p^*_{r+1})^{-V^*_{r+1}}=\op{H}_{\mu'}(\phi)(p_{r+1})^{-V_{r+1}}=x_{r+1},\\
y^*_{r+1}=(x^*_{r+1})^{e^*_{r+1}}(p^*_{r+1})^{-h^*_{r+1}}=(x_{r+1})^{e_{r+1}}(p_{r+1})^{-h_{r+1}}=y_{r+1}.
\end{array}
$$

By the very definition, $N_{r+1}=N_{\mu,\phi}=N^*_{r+1}$ depends only on $\mu$ and $\phi$. In particular, $s_{r+1}(g)=s^*_{r+1}(g)$, $u_{r+1}(g)=u^*_{r+1}(g)$,
for any nonzero $g\in K[x]$. This leads to $R_{r+1}=R^*_{r+1}$ by the usual argument using (\ref{mainR}) and the transcendence of $y_r$ over $\F_r$. 
\hfill{$\Box$}

\subsection{Variation of the data attached to one level}
Consider a fixed MacLane chain of $\mu$ of length $r$, as in (\ref{depth}). Once we know that the data and operators attached to the $r$-th level do not depend on the previous levels, our second aim is to analyze the variation of these data and operators when the key polynomial $\phi_r$ of that level changes.

By Lemma \ref{unicity}, the only way to obtain $\mu$ as an augmentation of $\mu_{r-1}$ is by taking $\mu=[\mu_{r-1};\phi^*_r,\nu_r]$, with $\phi^*_r=\phi_r+a$ such that $\deg a<\deg \phi_r$ and $\mu(a)\ge \mu(\phi_r)$. Since $\phi^*_r\sim_{\mu_{r-1}}\phi_r$, we have $\phi^*_r\nmid_{\mu_{r-1}}\phi_{r-1}$ too, so that it makes sense to consider another MacLane chain of $\mu$ as in (\ref{depth}), just by replacing $\phi_r$ by $\phi^*_r$.

As mentioned in section \ref{subsecML}, $\Gamma(\mu)$ is the subgroup of $\Q$ generated by $\Gamma(\mu_{r-1})$ and $\nu_r$; on the other hand, $e_r=e(\mu)/e( \mu_{r-1})$ is the least positive integer such that $e_r\Gamma(\mu)\subset\Gamma(\mu_{r-1})$. Hence, $\nu_r$ belongs to $\Gamma(\mu_{r-1})$ if and only if $e_r=1$. Hence, if $e_r>1$, then $\mu(\phi_r)=\mu_{r-1}(\phi_r)+\nu_r$ does not belong to $\Gamma(\mu_{r-1})$, and the equality $\mu(a)=\mu(\phi_r)$ cannot occur, because $\mu(a)=\mu_{r-1}(a)$ belongs to $\Gamma(\mu_{r-1})$. In other words,
\begin{equation}\label{e>1}
e_r>1\imp \mu(a)>\mu(\phi_r)\imp \phi^*_r\smu\phi_r.
\end{equation}

\begin{theorem}\label{lastlevel}
Consider two MacLane chains of an inductive valuation $\mu$, which differ only in the last augmentation step:
$$
\as{.6}
\mu_0\ \stackrel{\phi_1,\nu_1}\lra\  \mu_1\ \stackrel{\phi_2,\nu_2}\lra\ \cdots
\ \lra\ \mu_{r-2} 
\ \stackrel{\phi_{r-1},\nu_{r-1}}\lra\ \mu_{r-1}\ \begin{array}{c}\phi_r,\nu_r\\\lra\\\lra\\\phi^*_r,\nu_r\end{array}\;\mu_r=\mu
$$
Let us mark with a superscript $(\ )^*$ all data and operators attached to the lower MacLane chain. If $\phi^*_r\smu\phi_r$, we have

$$
p^*_r=p_r,\quad x^*_r=x_r,\quad y^*_r=y_r,\quad S^*_{\nu_r}=S_{\nu_r},\quad R^*_r=R_r.
$$

Assume that  $\phi^*_r\not\smu\phi_r$ and let $\eta:=R_r(\phi^*_r-\phi_r)\in\F_{\mu}^*$. Then,
$$
p^*_r=p_r,\quad x^*_r=x_r+p_r^{h_r}\eta,\quad y^*_r=y_r+\eta.
$$Further, for any nonzero $g\in K[x]$ let $s:=\ord_{y+\eta}R_r(g)$ and denote $P(g):=R_r(g)/(y+\eta)^s$. Then,
$$
s^*_r(g)=s,\qquad R_r^*(g)(y)=(y-\eta)^{s_r(g)}P(g)(y-\eta).
$$
\end{theorem}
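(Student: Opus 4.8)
The plan is to deduce everything from the uniqueness of the defining identity (\ref{mainR}) inside the graded algebra $\ggm$, after computing the new generators $p^*_r,x^*_r,y^*_r$ by hand. First I would isolate what the two chains share. By Lemma \ref{unicity} we may write $\phi^*_r=\phi_r+a$ with $\deg a<\deg\phi_r$ and $\mu(a)\ge\mu(\phi_r)$; since $\deg a<\deg\phi_r$ we also get $\mu_{r-1}(a)=\mu(a)>\mu_{r-1}(\phi_r)$, whence $\mu(\phi^*_r)=\mu(\phi_r)$. The recurrences (\ref{recurrence}) and the relation $p_r=x_{r-1}^{\ell_{r-1}}p_{r-1}^{\ell'_{r-1}}$ show that $p_r$, $V_r$, $e_r$, $h_r$ and $\F_\mu=\F_r$ depend only on the levels below $r$ (together with $\mu$ and $\nu_r$), so $p^*_r=p_r$, $V^*_r=V_r$, $e^*_r=e_r$, $h^*_r=h_r$. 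I would also record the uniqueness fact that, by the structure of $\ggm=\F_\mu[y_r][p_r,p_r^{-1}][x_r]$ as a free module of rank $e_r$ over $\F_\mu[y_r][p_r^{\pm1}]$ with basis $1,x_r,\dots,x_r^{e_r-1}$ and relation $x_r^{e_r}=y_rp_r^{h_r}$, reduction of $x_r^{s_r(g)}$ modulo that relation, together with the fact that $R_r(g)$ has nonzero constant term, makes the right-hand side of (\ref{mainR}) determine $s_r(g)$, $u_r(g)$ and $R_r(g)$ uniquely, and likewise with the starred chain.

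Next I would compute the generators. If $\phi^*_r\smu\phi_r$, then $\hm(\phi^*_r)=\hm(\phi_r)$, so $x^*_r=\hm(\phi^*_r)p_r^{-V_r}=x_r$ and $y^*_r=(x^*_r)^{e_r}p_r^{-h_r}=y_r$. If $\phi^*_r\not\smu\phi_r$, then $\mu(a)=\mu(\phi_r)$ (otherwise $\hm(\phi^*_r)=\hm(\phi_r)$), hence $e_r=1$ by (\ref{e>1}), so $x_r=y_rp_r^{h_r}$. Since $\deg a<\deg\phi_r$, the Newton polygon $N_r(a)$ is the single point $(0,\mu_{r-1}(a))$, so $s_r(a)=0$, $R_r(a)=\eta\in\F_\mu^*$, and $u_r(a)/e(\mu_{r-1})=\mu_{r-1}(a)=\mu(\phi_r)=\mu_{r-1}(\phi_r)+\nu_r$ gives $u_r(a)=V_r+h_r$; thus $\hm(a)=p_r^{V_r+h_r}\eta$. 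As $\mu(\phi_r)=\mu(a)=\mu(\phi^*_r)$, additivity of $\hm$ gives $\hm(\phi^*_r)=\hm(\phi_r)+\hm(a)$, so
$$x^*_r=\hm(\phi^*_r)p_r^{-V_r}=x_r+p_r^{h_r}\eta=(y_r+\eta)p_r^{h_r},\qquad y^*_r=x^*_rp_r^{-h_r}=y_r+\eta.$$

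Finally I would compare the operators through (\ref{mainR}). In the case $\phi^*_r\smu\phi_r$, writing (\ref{mainR}) for each chain and using $p^*_r=p_r$, $x^*_r=x_r$, $y^*_r=y_r$, the uniqueness fact forces $s^*_r(g)=s_r(g)$, $u^*_r(g)=u_r(g)$ and $R^*_r(g)=R_r(g)$ for every nonzero $g$; since the $\nu_r$-component is the segment of slope $-\nu_r$ joining $(s_r(g),u_r(g)/e(\mu_{r-1}))$ to the abscissa $s_r(g)+e_r\deg R_r(g)$, also $S^*_{\nu_r}=S_{\nu_r}$. In the case $\phi^*_r\not\smu\phi_r$, I would substitute $x_r=y_rp_r^{h_r}$ and $x^*_r=(y_r+\eta)p_r^{h_r}$ into (\ref{mainR}) for both chains; as $e_r=1$ the graded algebra is the Laurent ring $\F_\mu[y_r][p_r^{\pm1}]$, and comparing the two expressions for $\hm(g)$ there, first the power of $p_r$ and then the $y_r$-polynomial factor, yields
$$y_r^{s_r(g)}R_r(g)(y_r)=(y_r+\eta)^{s^*_r(g)}R^*_r(g)(y_r+\eta).$$
Taking $\ord_{y_r+\eta}$ of both sides — legitimate since $\eta\neq0$ and $R_r(g)(0)\neq0\neq R^*_r(g)(0)$ — gives $s^*_r(g)=\ord_{y+\eta}R_r(g)=s$; dividing by $(y_r+\eta)^s$ and then substituting $y_r=y-\eta$ gives $R^*_r(g)(y)=(y-\eta)^{s_r(g)}P(g)(y-\eta)$, as claimed.

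The hard part is entirely the invariant bookkeeping underlying this argument: confirming that $p_r$, $V_r$, $e_r$, $h_r$, $\F_r$ really depend only on the lower levels, computing $u_r(a)=V_r+h_r$ with the Newton polygon normalized as in Figure \ref{figComponent}, and establishing the uniqueness of (\ref{mainR}) in $\ggm$ in precisely the form used above. Once these routine points are in place, both cases of the theorem fall out from one short comparison of (\ref{mainR}) for the two chains.
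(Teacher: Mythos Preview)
Your proof is correct and follows the same overall strategy as the paper's: compute the generators $p_r^*,x_r^*,y_r^*$ directly from the shared lower-level data, then read off the residual-polynomial identities from (\ref{mainR}). The one substantive difference is in the case $\phi_r^*\smu\phi_r$. The paper first proves $S_{\nu_r}^*(g)=S_{\nu_r}(g)$ by a Newton-polygon argument---identifying the endpoint abscissas of $S_{\nu_r}(g)$ as $\ord_{\mu,\phi_r}(g)$ and $\ord_{\mu',\phi_r}(g)$ for a slightly perturbed valuation $\mu'=[\mu_{r-1};\phi_r,\nu_r-\epsilon]$, and then checking that $\phi_r^*\sim_{\mu'}\phi_r$ as well---and only afterwards deduces $R_r^*=R_r$ from (\ref{mainR}). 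You instead invoke the uniqueness of the representation (\ref{mainR}) inside $\ggm$ (via the free-module structure over $\F_\mu[y_r][p_r^{\pm1}]$ with basis $1,x_r,\dots,x_r^{e_r-1}$ together with $R_r(g)(0)\ne0$) to get $s_r^*=s_r$, $u_r^*=u_r$, $R_r^*=R_r$ in one stroke, recovering $S_{\nu_r}^*=S_{\nu_r}$ as a corollary. Your route is cleaner and avoids the auxiliary perturbed valuation; the paper's route stays closer to the Newton-polygon picture and does not need to unpack the module structure of $\ggm$. In the case $\phi_r^*\not\smu\phi_r$ the two arguments coincide: the paper's appeal to \cite[Thm.~4.1]{Rid} amounts exactly to your substitution $x_r=y_rp_r^{h_r}$ (valid since $e_r=1$) and comparison of the resulting expressions in $\F_\mu[y_r][p_r^{\pm1}]$.
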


\begin{proof}
By Lemma \ref{unicity},  $\phi_r^*=\phi_r+a$ with $\deg a<\deg\phi$ and $\mu(a)\ge \mu(\phi)$. 

All data attached to levels $i<r$ coincide for the both chains. Therefore,
$$
\begin{array}{l}
V_r=e_{r-1}f_{r-1}(e_{r-1}V_{r-1}+h_{r-1})=V^*_r,\\
p_r=x_{r-1}^{\ell_{r-1}}p_{r-1}^{\ell'_{r-1}}=p^*_r.
\end{array}
$$
Also, since $\nu_r=\nu^*_r$, we have $h^*_r=h_r$ and $e^*_r=e_r$.

Suppose $\phi^*_r\smu\phi_r$. Then,
$$
\begin{array}{l}
x^*_r=\hm(\phi^*_r)(p^*_r)^{-V^*_r}=\hm(\phi_r)p_r^{-V_r}=x_r,\\
y^*_r=(x^*_r)^{e^*_r}(p^*_r)^{-h^*_r}=x_r^{e_r}p_r^{-h_r}=y_r.
\end{array}
$$

Now, consider a nonzero $g\in K[x]$, and let $S_{\nu_r}(g)$, $S^*_{\nu_r}(g)$ be the $\nu_r$-components of $N_r(g)$, $N^*_r(g)$, respectively. Both segments lie on the line of slope $-\nu_r$ cutting the vertical axis at the point $(0,\mu(g))$ (see Figure \ref{figComponent}). Hence, in order to check that $S_{\nu_r}(g)=S^*_{\nu_r}(g)$ it suffices to show that the end points of both segments have the same abscissas. 

Let $\ord_{\mu,\phi_r}(g)$ be the largest integer $k$ such that $\phi_r^k\mid_\mu g$, namely the order with which the prime $\hm(\phi_r)$ divides $\hm(g)$ in $\ggm$. By \cite[Lem. 2.6]{Rid}, the abscissas of the end points of $S_{\nu_r}(g)$ are:
$$
s_r(g)=\ord_{\mu,\phi_r}(g),\quad s'_r(g)=\ord_{\mu',\phi_r}(g),
$$
where $\mu'=[\mu_{r-1};\phi_r,\nu_r-\epsilon]$ for a suficiently small positive rational number $\epsilon$. Since $\phi^*_r\smu\phi_r$, we have $\mu(a)>\mu(\phi_r)$, so that $\mu'(a)=\mu_{r-1}(a)=\mu(a)>\mu(\phi_r)>\mu'(\phi_r)$, and we have $\phi^*_r\sim_{\mu'}\phi_r$ as well. Hence,
$$
\as{1.2}
\begin{array}{c}
s_r(g)=\ord_{\mu,\phi_r}(g)=\ord_{\mu,\phi^*_r}(g)=s^*_r(g),\\ s'_r(g)=\ord_{\mu',\phi_r}(g)=\ord_{\mu',\phi^*_r}(g)=(s')^*_r(g).
\end{array}
$$
This implies  $S_{\nu_r}(g)=S^*_{\nu_r}(g)$.

In particular, $u_r(g)=u^*_r(g)$. We may now deduce $R_r(g)=R^*_r(g)$ by the usual argument using (\ref{mainR}) and the transcendence of $y_r$ over $\F_r$. This ends the proof of the theorem in the case $\phi^*_r\smu\phi_r$.

Suppose now $\phi^*_r\not\smu\phi_r$, or equivalently $\mu(a)=\mu(\phi)$, which implies $e_r=1$ by (\ref{e>1}). Both Newton polygons $N_r(a)=N^*_r(a)$ coincide with the point $(0,\mu(a))=(0,\mu(\phi_r))=(0,(V_r+h_r)/e(\mu_{r-1}))$. Hence,
$$
s_r(a)=0=s^*_r(a),\quad u_r(a)=V_r+h_r=u^*_r(a).
$$
By (\ref{mainR}), we have
$$
(p^*_r)^{V_r+h_r}R^*_r(a)=\hm(a)=(p_r)^{V_r+h_r}R_r(a),
$$
which implies $\eta:=R_r(a)=R^*_r(a)=\hm(a)p_r^{-V_r-h_r}$, since $p^*_r=p_r$. Thus,
$$
x^*_r=\hm(\phi^*_r)p_r^{-V_r}=\left(\hm(\phi_r)+\hm(a)\right)p_r^{-V_r}=x_r+p^{h_r}\eta,
$$
leading to $y^*_r=x^*_r(p^*_r)^{-h^*_r}=y_r+\eta$.

Now, for a nonzero $g\in K[x]$, denote $\alpha=\mu(g)$ and $u_r(\alpha)=e(\mu)\alpha\in\Z$. Consider the polynomials $$R_{r,\alpha}(g)=y^{s_r(g)}R_r(g),\quad  R^*_{r,\alpha}(g)=y^{s^*_r(g)}R^*_r(g).$$ By \cite[Thm. 4.1]{Rid}, we have identities:
$$
(p_r^*)^{u_r(\alpha)}R^*_{r,\alpha}(g)(y^*_r)=\hm(g)=
(p_r)^{u_r(\alpha)}R_{r,\alpha}(g)(y_r).
$$
Since $p^*_r=p_r$, we deduce:
$$
R_{r,\alpha}(g)(y_r)= R^*_{r,\alpha}(g)(y^*_r)=R^*_{r,\alpha}(g)(y_r+\eta),
$$
which implies $R_{r,\alpha}(g)(y)= R^*_{r,\alpha}(g)(y+\eta)$, by the transcendence of $y_r$ over $\F_r$. Let us rewrite this equality in terms of the original residual polynomials:
\begin{equation}\label{Ralpha}
y^{s_r(g)}R_r(g)(y)=(y+\eta)^{s^*_r(g)}R_r^*(g)(y+\eta).
\end{equation}
Since $r>0$, we have $y\nmid R_r(g)$, $y\nmid R^*_r(g)$ (cf. section \ref{subsecML}). Hence, $(y+\eta)\nmid R^*_r(g)(y+\eta)$, and the equality (\ref{Ralpha}) shows that $s^*_r(g)=\ord_{y+\eta}R_r(g)$ and $(y-\eta)^{s_r(g)}P(g)(y-\eta)=R_r^*(g)(y)$. 
\end{proof}

\section{Equivalence of types}\label{secTypes}
Types are computational representations of certain mathematical objects. It is natural to consider two types to be equivalent when they represent the same objects. In sections \ref{subsecNormVal}, \ref{subsecTypes}, we recall the objects parameterized by types and in section \ref{subsecTypesEquiv} we characterize the equivalence of types in terms of checkable conditions on the data supported by them (Lemma \ref{optstep} and Proposition \ref{charequiv}) and in terms of other invariants (Theorem \ref{finalchar}).

\subsection{Normalized inductive valuations}\label{subsecNormVal}
In a computational context, it is natural to normalize inductive valuations in order to get groups of values equal to $\Z$.

Given a MacLane chain of an inductive valuation $\mu$:
$$
\mu_0\ \stackrel{\phi_1,\nu_1}\lra\  \mu_1\ \stackrel{\phi_2,\nu_2}\lra\ \cdots
\ \stackrel{\phi_{r-1},\nu_{r-1}}\lra\ \mu_{r-1} 
\ \stackrel{\phi_{r},\nu_{r}}\lra\ \mu_{r}=\mu
$$
we consider the normalized valuations:
$$
v_i:=e(\mu_i)\mu_i=e_1\cdots e_i\,\mu_i,\quad 0\le i\le r,
$$
with group of values $\Gamma(v_i)=v_i(K(x)^*)=e(\mu_i)\Gamma(\mu_i)=\Z$. The property ${\mu_i}_{\mid K}=v$ translates into  ${v_i}_{\mid K}=e_1\cdots e_i\, v$.

The graded algebras $\gg(\mu_i)$ and $\gg(v_i)$ coincide up to the change of graduation given by the group isomorphism 
$$
\Gamma(\mu_i)\iso \Gamma(v_i)=\Z,\quad\ \alpha\mapsto e_1\cdots e_i\,\alpha.
$$    
The piece of degree zero $\Delta_i:=\Delta(\mu_i)=\Delta(v_i)$ is the same for both valuations. Further, for any $g,h\in K[x]$ we obviously have
$$
g\mid_{\mu_i}h \sii g\mid_{v_i}h,\qquad   
g\sim_{\mu_i}h \sii g\sim_{v_i}h.
$$

Also, consider the normalized slopes
$$
\la_i:=e(\mu_{i-1})\nu_i=h_i/e_i,\quad 0\le i\le r.
$$
The augmentation step $\mu_i=[\mu_{i-1};\phi_i,\nu_i]$ translates into  $v_i=[e_iv_{i-1};\phi_i,\la_i]$. If $g=\sum_{0\le s}a_s\phi_i^s$ is the $\phi_i$-expansion of a nonzero $g\in K[x]$, we have
$$
v_i(g)=\mn\left\{e_iv_{i-1}(a_s\phi_i^s)+s\la_i\mid 0\le s\right\}=\mn\left\{v_i(a_s\phi_i^s)\mid 0\le s\right\}.
$$  
The property $\mu_{i-1}<\mu_i$ translates into $e_iv_{i-1}<v_i$.

With the obvious definition, we get a MacLane chain of $v_r$:
$$
\mu_0=v_0\ \stackrel{\phi_1,\la_1}\lra\  v_1\ \stackrel{\phi_2,\la_2}\lra\ \cdots
\ \stackrel{\phi_{r-1},\la_{r-1}}\lra\ v_{r-1} 
\ \stackrel{\phi_{r},\la_r}\lra\ v_r
$$
with attached data and operators as described in section \ref{subsecML}. 

This approach has the advantage that the Newton polygons $N_{v_{i-1},\phi_i}(g)$ are derived from clouds of points in $\R^2$ with integer coordinates. The affinity $\hh(x,y)=(x,e_1\cdots e_{i-1}\,y)$ maps  $N_{\mu_{i-1},\phi_i}(g)$ to  $N_{v_{i-1},\phi_i}(g)$.
This affinity maps a side of slope $\rho$ to a side of slope $e_1\cdots e_{i-1}\,\rho$ with the same abscissas of the end points. Thus, the role of the $\nu_i$-component is undertaken by the $\la_i$-component in the normalized context. 
Note that the left end point of the $\nu_i$-component of $N_{\mu_{i-1},\phi_i}(g)$ is $(s_i(g),u_i(g)/e_1\cdots e_{i-1})$, while the left end point of the $\la_i$-component of $N_{v_{i-1},\phi_i}(g)$ is simply $(s_i(g),u_i(g))$.

The rest of data and operators attached to both MacLane chains coincide. Specially, for $0\le i<r$, we have the same residual polynomial operators:
$$
R_{\mu_{i-1},\phi_i,\nu_i}=R_i=R_{v_{i-1},\phi_i,\la_i}\colon K[x]\lra \F_i[y],
$$ 
and the same family of prime polynomials $\psi_i=R_i(\phi_{i+1})\in\F_i[y]$.

\subsection{Types over $(K,v)$}\label{subsecTypes}
A type of order $r$ is a collection of objects, distributed into levels:
$$\ty=(\varphi_0;(\phi_1,\la_1,\varphi_1);\dots;(\phi_r,\la_r,\varphi_r)),
$$ such that the pairs $\phi_i,\la_i$ determine a McLane chain of a normalized inductive valuation $v_\ty$:
\begin{equation}\label{depth2}
v_0\ \stackrel{\phi_1,\la_1}\lra\  v_1\ \stackrel{\phi_2,\la_2}\lra\ \cdots
\ \stackrel{\phi_{r-1},\la_{r-1}}\lra\ v_{r-1}\ \stackrel{\phi_r,\la_r}\lra\ v_r=v_\ty
\end{equation}
and the data $\varphi_0,\dots,\varphi_{r}$ build a tower of finite field extensions of $\F$:
$$
\F_{0,\ty}:=\F\,\lra\, \F_{1,\ty}\,\lra\,\cdots\,\lra\, \F_{r,\ty}\,\lra\, \F_{r+1,\ty}
$$
constructed as follows. Each $\varphi_i\in\F_{i,\ty}[y]$ is a monic irreducible polynomial, such that $\varphi_i\ne y$ for $i>0$. The field $\F_{i+1,\ty}$ is defined to be $\F_{i,\ty}[y]/(\varphi_i)$.

Also, there is an specific procedure to compute certain residual polynomial operators
$$
R_{i,\ty}\colon K[x]\lra \F_{i,\ty}[y],\quad 0\le i\le r, 
$$ 
such that $\varphi_i=R_{i,\ty}(\phi_{i+1})$ for $0\le i<r$. The essential fact is that these objects reproduce the tower $\F_0\,\to\,\cdots \,\to\,\F_r$ and the residual polynomial operators $R_i$ attached to the MacLane chain of $v_\ty$. More precisely, there is a commutative diagram of vertical isomorphisms
$$
\as{1.2}
\begin{array}{ccccccc}
\F=\F_{0,\ty}\ &\subset& \F_{1,\ty}\ &\subset&\cdots &\subset &\F_{r,\ty}\ \\  
\quad\ \,\|\iota_0 &&\ \downarrow\iota_1&&\cdots&&\ \downarrow\iota_r\\
\F=\F_0\quad &\subset& \F_1\ &\subset&\cdots &\subset &\F_r\
\end{array}
$$
such that $R_i=\iota_{i}[y]\circ R_{i,\ty}$ for all $0\le i\le r$. In particular, $$\psi_i=R_i(\phi_{i+1})=\iota_i[y]\left(R_{i,\ty}(\phi_{i+1})\right)=\iota_i[y]\left(\varphi_i\right),\quad 0\le i<r.
$$

The isomorphisms $\iota_0,\dots,\iota_r$ are uniquely determined by the isomorphisms $j_0,\dots,j_r$ defined in (\ref{ji}). In fact, the isomorphism $\iota_0$ is the identity map on $\F_{0,\ty}=\F=\F_0$, while $\iota_{i+1}$ is determined by the following commutative diagram of vertical isomorphisms: 
\begin{center}
\setlength{\unitlength}{4mm}
\begin{picture}(18,8)
\put(0,3.5){$\as{1.2}
\begin{array}{ccl} 
\F_{i,\ty}[y]&\twoheadrightarrow&\F_{i,\ty}[y]/(\varphi_i)\ =\ \F_{i+1,\ty}\\
\!\!\!\!\!\iota_i[y]\downarrow\hphantom{m}&&\qquad\downarrow\\
\F_i[y]&\twoheadrightarrow&\ \F_i[y]/(\psi_i)\\
j_i\downarrow\hphantom{m}&&\qquad\downarrow\\
\Delta_i&\twoheadrightarrow&\quad \Delta_i/\ll_i\,\iso\F_{i+1}\subset \Delta_{i+1}
\end{array}
$}
\put(13.2,5.5){\vector(0,-1){3.4}}
\put(13.8,3.6){$\iota_{i+1}$}
\end{picture}
\end{center}

Therefore, for the theoretical considerations of this paper it will be harmless to consider the isomorphisms $\iota_0,\dots,\iota_r$ as identities. That is,  we shall identify all data and operators supported by $\ty$ with the analogous data and operators attached to $v_\ty$: 
$$\F_{i}=\F_{i,\ty},\quad R_i=R_{i,\ty},\quad 0\le i\le r.$$
In particular, $\psi_i=\varphi_i$ for $0\le i<r$. 
According to this convention, from now on a type will be a collection of objects:
$$\ty=(\psi_0;(\phi_1,\la_1,\psi_1);\dots;(\phi_r,\la_r,\psi_r)),
$$ such that the pairs $\phi_i,\la_i$ determine a McLane chain of a normalized inductive valuation $v_\ty$ as in (\ref{depth2}), and $\psi_i\in\F_i[y]$ are the monic irreducible polynomials determined by the MacLane chain too, for $0\le i<r$. 

What is the role of the prime polynomial $\psi_r\in\F_r[y]$? Let us denote by $$\mu_\ty:=(e_1\cdots e_r)^{-1}v_\ty,\qquad f_r:=\deg\psi_r,$$
the corresponding non-normalized inductive valuation attached to $\ty$ and the degree of $\psi_r$, respectively. Thanks to the isomorphism $j_r$, the polynomial $\psi_r$
 determines a maximal ideal of $\Delta_r=\Delta(\mu_\ty)$:
$$
\ll_\ty:=j_r\left(\psi_r\F_r[y]\right)=\psi_r(y_r)\Delta(\mu_\ty)\in\mx(\Delta(\mu_\ty)).
$$ 
The pair $(\mu_\ty,\ll_\ty)$, or equivalently $(v_\ty,\ll_\ty)$, is the ``raison d'\^{e}tre" of $\ty$.

\subsection{Representatives of types}\label{subsecRepr}

Denote $\mu:=\mu_\ty$, $\Delta:=\Delta(\mu)$, $\ll:=\ll_\ty$. 
The maximal ideal $\ll$ determines a certain subset of key polynomials for $\mu$, which are called \emph{representatives} of the type $\ty$. By definition, the set $\rep(\ty)$ of all representatives of $\ty$ is:
$$
\rep(\ty)=\left\{\phi\in \kpm\mid \rr_\mu(\phi)=\ll_\ty \right\}\subset\kpm.
$$ 
Since the residual ideal map $\rr_\mu\colon \kpm\to\mx(\Delta)$ is onto \cite[Thm. 5.7]{Rid}, the set $\rep(\ty)$ is always non-empty. By (\ref{repr}), the representatives of $\ty$ constitute one of the $\mu$-equivalence classes of the set $\kpm$. 

For any monic $\phi\in K[x]$, the property of being a representative of the type $\ty$ is characterized too by the following properties \cite[Lem. 3.1]{gen}:
\begin{equation}\label{charrep}
\phi\in\rep(\ty)\sii\phi\in\oo[x],\ \deg\phi=e_rf_r m_r, \ R_r(\phi)=\psi_r.  
\end{equation}
By (\ref{charrep}) and (\ref{em}), the representatives of $\ty$ are proper key polynomials for $\mu$. 

Let $\phi$ be any representative of a type $\ty$ of order $r>0$. By \cite[Cor. 5.3]{Rid}, $R_\mu(\phi_r)=y_r\Delta\ne\psi_r(y_r)\Delta=\ll_\ty$, because $\psi_r\ne y$. By (\ref{repr}), $\phi\nmid_\mu\phi_r$, and we may extend the MacLane chain of $ \mu$
to a MacLane chain of length $r+1$ of the augmented valuation $\mu'=[\mu;\phi,\nu]$, where $\nu$ is an arbitray positive rational number. By choosing an arbitrary monic irreducible polynomial $\psi\in\F_{r+1}[y]=\F_{\mu'}[y]$, we construct a type of order $r+1$ extending $\ty$:
$$
\ty'=(\ty;(\phi,\nu,\psi)):=(\psi_0;(\phi_1,\la_1,\psi_1);\dots;(\phi_r,\la_r,\psi_r);(\phi,\la,\psi)),
$$  
where $\la=e(\mu)\nu$.

\begin{definition}
Let $\ty$ be a type of order $r\ge0$. For any $g\in K[x]$ we define $\ord_\ty(g):=\ord_{\psi_{r}}R_{r}(g)$; that is, the greatest integer $a$ such that $\psi_r^a$ divides $R_{r}(g)$ in $\F_r[y]$.
\end{definition}

Since the operators $R_i$ are multiplicative \cite[Cor. 4.11]{Rid}, the identity $\ord_\ty(gh)=\ord_\ty(g)+\ord_\ty(h)$ holds for all $g,h\in K[x]$.

\subsection{Equivalence of types}\label{subsecTypesEquiv}
Let $\ty$ be a type of order $r$ with representative $\phi$, and let $\la\in\Q_{>0}$. We denote
$$
\begin{array}{l}
N_i:=N_{v_{i-1},\phi_i},\quad 1\le i\le r;\qquad 
N_{\ty,\phi}:=N_{v_\ty,\phi},\\
R_{\ty,\phi,\la}:=R_{v_\ty,\phi,\la}=R_{\mu_\ty,\phi,\la/e(\mu_\ty)}.
\end{array}
$$
Note that $R_{\ty,\phi,\la}$ is well-defined by Theorem \ref{laststep}.

\begin{definition}\label{defequiv}
Two types $\ty$, $\ty^*$ are equivalent if $v_\ty=v_{\ty^*}$ and  $\ll_\ty=\ll_{\ty^*}$. In this case we write $\ty\equiv\ty^*$.
\end{definition}

The next result is an immediate consequence of the definitions.

\begin{proposition}\label{samereps}
Let $\ty$, $\ty^*$ be two equivalent types. Then,
\begin{enumerate}
\item $\rep(\ty^*)=\rep(\ty)$.
\item For any $\phi\in\rep(\ty)$ and any $\la\in\Q_{>0}$, we have
$N_{\ty^*,\phi}=N_{\ty,\phi}$ and $R_{\ty^*,\phi,\la}=R_{\ty,\phi,\la}$.
\end{enumerate}
\end{proposition}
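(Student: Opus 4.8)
The statement to prove, Proposition \ref{samereps}, asserts that two equivalent types share the same set of representatives, and that the Newton polygon and residual polynomial operators attached to a common representative agree. The plan is to unwind the definitions: two types $\ty,\ty^*$ are equivalent precisely when $v_\ty=v_{\ty^*}$ and $\ll_\ty=\ll_{\ty^*}$, i.e. they give rise to the same pair $(\mu,\ll)$ with $\mu:=\mu_\ty=\mu_{\ty^*}$ and $\ll:=\ll_\ty=\ll_{\ty^*}$.

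For part (1), I would simply observe that the set $\rep(\ty)$ was \emph{defined} purely in terms of the pair $(\mu,\ll)$, namely $\rep(\ty)=\{\phi\in\kpm\mid\rr_\mu(\phi)=\ll\}$. Since $\ty$ and $\ty^*$ yield the same $\mu$ and the same $\ll$, we get $\rep(\ty)=\rep(\ty^*)$ immediately; there is nothing to compute. (One could alternatively invoke the characterization \eqref{charrep}, but the direct definitional argument is cleaner.)

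For part (2), fix $\phi\in\rep(\ty)=\rep(\ty^*)$ and $\la\in\Q_{>0}$. The key point is that $N_{\ty,\phi}=N_{v_\ty,\phi}=N_{\mu,\phi}$ depends, by definition of the Newton polygon operator, only on the valuation $\mu$ (equivalently $v_\ty$) and on the polynomial $\phi$ — and these coincide for $\ty$ and $\ty^*$ — so $N_{\ty,\phi}=N_{\ty^*,\phi}$. For the residual polynomial operator, $R_{\ty,\phi,\la}=R_{\mu_\ty,\phi,\la/e(\mu_\ty)}=R_{\mu,\phi,\la/e(\mu)}$; by Theorem \ref{laststep} this operator is well-defined and depends only on $\mu$, $\phi$, and the slope $\la/e(\mu)$, none of which change when we pass from $\ty$ to $\ty^*$ (note $e(\mu_\ty)=e(\mu)=e(\mu_{\ty^*})$ since the ramification index is determined by $\mu$). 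Hence $R_{\ty,\phi,\la}=R_{\ty^*,\phi,\la}$.

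There is essentially no obstacle here: the proposition is, as the paper says, an immediate consequence of the definitions together with the well-definedness granted by Theorem \ref{laststep}. The only point requiring minimal care is to note explicitly that the extended MacLane chain used to define $R_{\ty,\phi,\la}$ does not depend on the choice of MacLane chain of $\mu$ — but this is exactly the content of Theorem \ref{laststep}, which we are entitled to use. Thus the entire argument is a matter of checking that each object appearing in the conclusion was built only from data that the hypothesis $\ty\equiv\ty^*$ guarantees to be shared.
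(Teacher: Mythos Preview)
Your proposal is correct and matches the paper's approach exactly: the paper simply states that the proposition is an immediate consequence of the definitions, and your unwinding of those definitions (together with the appeal to Theorem \ref{laststep} for the well-definedness of $R_{\ty,\phi,\la}$) is precisely what that sentence means.
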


The order of a type is not preserved by equivalence. 
In order to find a characterization of the equivalence of types in terms of the data supported by them, we consider \emph{optimization steps} derived from the optimization steps for MacLane chains. 

\begin{definition}\label{stationary}
Let $\ty$ be a type of order $r$. We say that a level $(\phi_i,\la_i,\psi_i)$ of $\ty$ is \emph{stationary} if $e_i=f_i=1$, or equivalently, if $\la_i\in\Z$ and $\deg\psi_i=1$. 

We say that $\ty$ is \emph{optimal} if $\deg\phi_1<\cdots <\deg\phi_r$, or equivalently, if all levels $i<r$ are non-stationary. We say that $\ty$ is \emph{strongly optimal} if all levels $i\le r$ are non-stationary. 
\end{definition}

\begin{lemma}\label{optstep}
For $r\ge 2$, let $\ty_0$ be a type of order $r-2$. Consider a type 
$$
\ty=(\ty_0;(\phi_{r-1},\la_{r-1},\psi_{r-1});(\phi_r,\la_r,\psi_r))
$$
of order $r$ whose $(r-1)$-th level is stationary. Then, $\phi_r$ is a representative of $\ty_0$ and the type $\ty^*=(\ty_0;(\phi_r,\la_{r-1}+\la_r,\psi_r))$
is equivalent to $\ty$. Moreover, 
\begin{equation}\label{NR}
N_{r-1}^*=\hh\circ N_r,\qquad R_{r-1}^*=R_r,
\end{equation}
where $\hh$ is the affinity $\hh(x,y)=(x,y-\la_{r-1}x)$.
Thus, $\ord_\ty=\ord_{\ty^*}$ as functions on $K[x]$.
\end{lemma}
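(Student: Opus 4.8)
The plan is to reduce everything to the single-augmentation computation already carried out in Lemma~\ref{firstcase} and Lemma~\ref{augmentation}, transported to the normalized setting. First I would observe that the hypothesis that the $(r-1)$-th level is stationary means exactly $e_{r-1}=f_{r-1}=1$, hence $\deg\phi_{r-1}=\deg\phi_r$ by the recurrences \eqref{recurrence}. By Lemma~\ref{augmentation} applied to the tail $v_{r-2}\stackrel{\phi_{r-1},\la_{r-1}}\lra v_{r-1}\stackrel{\phi_r,\la_r}\lra v_r$ of the MacLane chain of $v_\ty$, the polynomial $\phi_r$ is a key polynomial for $v_{r-2}$ and $v_r=[e_rv_{r-2};\phi_r,\la_{r-1}+\la_r]$; thus the pairs defining $\ty^*$ do determine a bona fide MacLane chain of the same valuation $v_{\ty^*}=v_\ty$. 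To see that $\phi_r\in\rep(\ty_0)$ I would invoke the characterization \eqref{charrep}: $\phi_r\in\oo[x]$ and is monic (it is a key polynomial), $\deg\phi_r=m_r=e_{r-2}f_{r-2}m_{r-2}=e_{r-2}f_{r-2}\,m_{r-2}$ with $e_{r-2}=e(\ty_0)$, $f_{r-2}=f(\ty_0)$, $m_{r-2}=m(\ty_0)$ in the notation of $\ty_0$ — the only mild point to check is that this equals $e_{r-2}f_{r-2}m_{r-2}$ as required of a representative of $\ty_0$, which is immediate from \eqref{recurrence} since $e_{r-1}=f_{r-1}=1$ — and $R_{r-1}(\phi_r)=\psi_{r-1}$ by the very definition of a type. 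Hence $\phi_r$ is a representative of $\ty_0$.

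Next I would establish $\ll_\ty=\ll_{\ty^*}$, which together with $v_\ty=v_{\ty^*}$ gives $\ty\equiv\ty^*$ by Definition~\ref{defequiv}. Both ideals live in $\Delta_r=\Delta(v_r)=\Delta(v_{\ty})=\Delta(v_{\ty^*})$, the piece of degree zero being insensitive to the choice of MacLane chain. The ideal $\ll_\ty$ is $\psi_r(y_r)\Delta_r$ and $\ll_{\ty^*}$ is $\psi_r(y^*_{r-1})\Delta_r$, where $y_r$ and $y^*_{r-1}$ are the respective degree-zero generators. By the normalized analogue of Lemma~\ref{firstcase}(1) — which applies precisely because $e_{r-1}=1$, so that in the normalized chain $v^*_{r-1}=[e_rv_{r-2};\phi_r,\la_{r-1}+\la_r]$ is a single augmentation of $v_{r-2}$ with $e^*_{r-1}=e_r$ and $\la^*_{r-1}=\la_{r-1}+\la_r$ — one has $y^*_{r-1}=y_r$ (and $p^*_{r-1}=p_r$, $x^*_{r-1}=x_rp_r^{h_{r-1}}$). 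Since the generators coincide, $\ll_\ty=\ll_{\ty^*}$. This is the crux of the whole statement, and it is exactly Lemma~\ref{firstcase} read in the normalized context; the only thing to be careful about is translating the non-normalized slope identity $h^*_{r-1}=h_r+e_rh_{r-1}$ (i.e.\ $\la^*_{r-1}=\la_r+\la_{r-1}$ after dividing by $e(\mu)$) correctly, but this is routine.

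Finally, the assertions \eqref{NR} are the normalized form of Lemma~\ref{firstcase}(2). The equality $N^*_{r-1}=N_{v_{r-2},\phi_r}=\hh\circ N_{v_{r-1},\phi_r}=\hh\circ N_r$ with $\hh(x,y)=(x,y-\la_{r-1}x)$ is immediate from Lemma~\ref{augmentation} (recalling that in the normalized chain the affinity of that lemma has shift $\la_{r-1}$ rather than $\nu_{r-1}$). For the residual polynomials, since $e_r=e^*_{r-1}$, the $\la_r$-component of $N_r(g)$ is mapped by $\hh$ onto the $\la^*_{r-1}$-component of $N^*_{r-1}(g)$, so the left endpoints satisfy $s^*_{r-1}(g)=s_r(g)$ and $u^*_{r-1}(g)=u_r(g)-s_r(g)h_{r-1}$; feeding these into the defining identity \eqref{mainR} for $\op{H}_{\mu_\ty}(g)$ and using $x^*_{r-1}=x_rp_r^{h_{r-1}}$, $y^*_{r-1}=y_r$, the powers of $x^*_{r-1}$ and $p^*_{r-1}$ exactly reproduce those of $x_r$ and $p_r$, whence $R^*_{r-1}(g)(y_r)=R_r(g)(y_r)$ and therefore $R^*_{r-1}(g)=R_r(g)$ by transcendence of $y_r$ over $\F_r$. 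In particular, since $\ord_\ty(g)=\ord_{\psi_r}R_r(g)$ and $\ord_{\ty^*}(g)=\ord_{\psi_r}R^*_{r-1}(g)$ and $R_r=R^*_{r-1}$, we get $\ord_\ty=\ord_{\ty^*}$ as functions on $K[x]$. The main obstacle is bookkeeping: making sure the normalization conventions ($\la$ versus $\nu$, $v_i$ versus $\mu_i$) are applied consistently when invoking Lemmas~\ref{firstcase} and \ref{augmentation}, but no new idea beyond those lemmas is needed.
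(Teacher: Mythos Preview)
Your overall plan is exactly the paper's: reduce everything to Lemmas~\ref{augmentation} and~\ref{firstcase}. Your derivation of $v_{\ty^*}=v_\ty$, of the identities \eqref{NR}, and of $\ord_\ty=\ord_{\ty^*}$ is correct. Your argument for $\ll_\ty=\ll_{\ty^*}$ via $y^*_{r-1}=y_r$ is a clean direct variant; the paper instead picks a representative $\phi$ of $\ty$, uses $R^*_{r-1}(\phi)=R_r(\phi)=\psi_r$ together with \eqref{charrep} to see that $\phi\in\rep(\ty^*)$, and concludes $\ll_{\ty^*}=\rr_{\mu_{\ty^*}}(\phi)=\rr_{\mu_\ty}(\phi)=\ll_\ty$. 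Both routes are fine.

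There is one genuine slip, in your verification that $\phi_r\in\rep(\ty_0)$. Since $\ty_0$ has order $r-2$, the characterization \eqref{charrep} demands $R_{r-2}(\phi_r)=\psi_{r-2}$, not $R_{r-1}(\phi_r)=\psi_{r-1}$. The latter identity is indeed true ``by the very definition of a type,'' but it is irrelevant to membership in $\rep(\ty_0)$; as written, the step does not establish what you claim. You can repair it in either of two ways. The paper observes that $N_{r-1}(\phi_r)$ is one-sided of slope $-\la_{r-1}$, hence $\phi_{r-1}\mid_{\mu_{\ty_0}}\phi_r$, and then \eqref{repr} gives $\rr_{\mu_{\ty_0}}(\phi_r)=\rr_{\mu_{\ty_0}}(\phi_{r-1})=\ll_{\ty_0}$. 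Alternatively, staying with \eqref{charrep}: the proof of Lemma~\ref{augmentation} shows $\mu_{r-2}(\phi_r-\phi_{r-1})>\mu_{r-2}(\phi_{r-1})$, i.e.\ $\phi_r\sim_{\mu_{r-2}}\phi_{r-1}$, so again by \eqref{repr} (or directly from $H_{\mu_{r-2}}(\phi_r)=H_{\mu_{r-2}}(\phi_{r-1})$ and \eqref{mainR}) one gets $\rr_{\mu_{r-2}}(\phi_r)=\ll_{\ty_0}$, hence $\phi_r\in\rep(\ty_0)$.
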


\begin{proof}
By Lemma \ref{augmentation}, $\phi_r$ is a key polynomial for $\mu_{\ty_0}$ and
$$
\mu_\ty=[\mu_{\ty_0};\phi_r,\nu_{r-1}+\nu_r]=\mu_{\ty^*},
$$ 
where $\nu_{r-1}=\la_{r-1}/e_1\cdots e_{r-2}$ and $\nu_r=\la_r/e_1\cdots e_{r-1}=\la_r/e_1\cdots e_{r-2}$.

By \cite[Lem. 5.2]{Rid}, $N_{r-1}(\phi_r)$ is one-sided of negative slope $-\nu_{r-1}$; hence, \cite[Lem. 2.1]{Rid}
shows that $\phi_{r-1}\mid_{\mu_{\ty_0}}\phi_r$. By (\ref{repr}), we have 
$$
\ll_{\ty_0}=\rr_{\mu_{\ty_0}}(\phi_{r-1})=\rr_{\mu_{\ty_0}}(\phi_r),
$$
so that $\phi_r$ is a representative of $\ty_0$. 

The identities (\ref{NR}) are a consequence of Lemma \ref{firstcase}. 
Finally, let $\phi$ be a representative of $\ty$, so that $\ll_\ty=\rr_{\mu_\ty}(\phi)$. 
Since $R_r(\phi)=\psi_r$, we deduce that $R^*_{r-1}(\phi)=R_r(\phi)=\psi_r$. Hence, $\phi$
is a representative of $\ty^*$ too, because it satisfies the conditions of (\ref{charrep}), characterizing the representatives of a type. Therefore,
$$
\ll_{\ty^*}=\rr_{\mu_{\ty^*}}(\phi)=\rr_{\mu_{\ty}}(\phi)=\ll_\ty,
$$ 
and the types $\ty$, $\ty^*$ are equivalent.
\end{proof}

After a finite number of these optimization steps we may convert any type into an optimal type in the same equivalence class. Thus, in order to check if two types are equivalent we need only to characterize the equivalence of optimal types. The characterization we obtain is an immediate consequence of the characterization of MacLane optimal chains \cite[Prop. 3.6]{Rid} and Lemma \ref{lastlevel}.

\begin{proposition}\label{charequiv}
Two optimal types
$$
\as{1.2}
\begin{array}{l}
\ty=(\psi_0;(\phi_1,\la_1,\psi_1);\dots;(\phi_r,\la_r,\psi_r)),\\
\ty^*=(\psi^*_0;(\phi^*_1,\la^*_1,\psi^*_1);\dots;(\phi^*_{r^*},\la^*_{r^*},\psi^*_{r^*})).
\end{array}
$$
are equivalent if and only if they satisfy the following conditions:
\begin{itemize}
\item $r=r^*$.
\item $\la_i=\la^*_i$ for all $1\le i\le r$. 
\item $\deg\phi_i=\deg\phi^*_i$ and $\mu_i(a_i)\ge \mu_i(\phi_i)$ for all $1\le i\le r$, where $a_i:=\phi^*_i-\phi_i$.
\item $\psi_r^*(y)=\psi_r(y-\eta_r)$, where $\eta_0:=0$ and for all $1\le i\le r$ we take
$$
\eta_i:=
\begin{cases}
0,&  \mbox{ if }\mu_i(a_i)>\mu_i(\phi_i) \quad\mbox{ (i.e. }\phi^*_i\sim_{\mu_i}\phi_i),\\
R_i(a_i)\in\F_i^*,&  \mbox{ if }\mu_i(a_i)=\mu_i(\phi_i) \quad\mbox{ (i.e. }\phi^*_i\not\sim_{\mu_i}\phi_i).
\end{cases}
$$
\end{itemize}

In this case,  $\psi_i^*(y)=\psi_i(y-\eta_i)$ for all $0\le i\le r$, and for any nonzero $g\in K[x]$ we have:
$$
s^*_i(g)=\ord_{y+\eta_i}R_i(g),\quad R^*_i(g)(y)=(y-\eta_i)^{s_i(g)}P_i(g)(y-\eta_i),\quad  1\le i\le r,
$$ 
where $P_i(g)(y):=R_i(g)(y)/(y+\eta_i)^{s^*_i(g)}$.
\end{proposition}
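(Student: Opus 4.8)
The plan is to split the statement into two independent parts: first, that $v_\ty=v_{\ty^*}$ is equivalent to the first three bullet conditions; then, assuming those, that $\ll_\ty=\ll_{\ty^*}$ is equivalent to the fourth, with the displayed formulas for $\psi^*_i$, $s^*_i(g)$ and $R^*_i(g)$ produced along the way.

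For the first part: if $v_\ty=v_{\ty^*}=:v$, then, $\ty$ and $\ty^*$ being optimal, their underlying chains are two optimal MacLane chains of the single valuation $v$; by \cite[Prop. 3.6]{Rid} these have equal length $r=r^*$, equal intermediate valuations $\mu_i=\mu^*_i$ and equal slopes $\nu_i=\nu^*_i$ (hence $\la_i=\la^*_i$), so in particular $\Delta_i=\Delta^*_i$ and $\F_i=\F^*_i$ for all $i$. Applying Lemma~\ref{unicity} to $\mu_i=[\mu_{i-1};\phi_i,\nu_i]=[\mu_{i-1};\phi^*_i,\nu_i]$ gives $\deg\phi_i=\deg\phi^*_i$ and $\mu_i(a_i)\ge\mu_i(\phi_i)$, which is the third bullet. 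Conversely, if the three conditions hold, an induction on $i$ using Lemma~\ref{unicity} in the form $[e_iv_{i-1};\phi_i,\la_i]=[e_iv_{i-1};\phi^*_i,\la_i]$ shows $v_i=v^*_i$ for all $i$, hence $v_\ty=v_{\ty^*}$.

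Assume now the three conditions, and put $v:=v_\ty=v_{\ty^*}$. For each $0\le i\le r$ I would compare the level-$i$ data as follows. Truncate both types at level $i$, and first pass from the chain $\phi_1,\dots,\phi_i$ to the chain $\phi^*_1,\dots,\phi^*_{i-1},\phi_i$: this is again an optimal MacLane chain of $\mu_i$, the only new divisibility condition being $\phi_i\nmid_{\mu_{i-1}}\phi^*_{i-1}$, which holds because $\deg\phi^*_{i-1}=\deg\phi_{i-1}<\deg\phi_i$ (optimality) and $\phi_i$ is $\mu_{i-1}$-minimal. By Theorem~\ref{laststep} the level-$i$ generators $p_i,x_i,y_i$ and the operators $N_i,R_i$ depend only on $\mu_{i-1}$, $\phi_i$ and $\nu_i$, hence are unchanged by this move; then the further change $\phi_i\to\phi^*_i$, reaching the chain of $\ty^*$ truncated at level $i$, is governed by Theorem~\ref{lastlevel}. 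With $\eta_i:=R_i(\phi^*_i-\phi_i)$ (which vanishes exactly when $\phi^*_i\sim_{\mu_i}\phi_i$, and is nonzero only if $e_i=1$, by (\ref{e>1})) that theorem yields $y^*_i=y_i+\eta_i$ together with the asserted formulas $s^*_i(g)=\ord_{y+\eta_i}R_i(g)$ and $R^*_i(g)(y)=(y-\eta_i)^{s_i(g)}P_i(g)(y-\eta_i)$. For $i<r$ it also fixes $\psi^*_i$: since $\phi^*_{i+1}\sim_{\mu_i}\phi_{i+1}$ (because $\mu_i(a_{i+1})=\mu_{i+1}(a_{i+1})\ge\mu_{i+1}(\phi_{i+1})>\mu_i(\phi_{i+1})$) one has $R_i(\phi^*_{i+1})=R_i(\phi_{i+1})=\psi_i$ and $s_i(\phi^*_{i+1})=0$, while $\ord_{y+\eta_i}\psi_i=0$ ($\psi_i$ is monic irreducible with $\psi_i\ne y$, and $\psi_i\ne y+\eta_i$ whenever $\eta_i\ne0$, since then $e_i=1$ forces $f_i>1$ by non-stationarity); substituting into the formula for $R^*_i(\phi^*_{i+1})$ gives $\psi^*_i=\psi_i(y-\eta_i)$, and therefore $\ll^*_i=\psi^*_i(y^*_i)\Delta_i=\psi_i(y_i)\Delta_i=\ll_i$.

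Taking $i=r$ in the previous paragraph gives $y^*_r=y_r+\eta_r$, so $\ll_{\ty^*}=\psi^*_r(y^*_r)\Delta_r=\psi^*_r(y_r+\eta_r)\Delta_r$, while $\ll_\ty=\psi_r(y_r)\Delta_r$. Since $\Delta_r=\F_r[y_r]$ is a polynomial ring in the transcendental $y_r$ and both generators are monic, these ideals coincide if and only if $\psi_r(y_r)=\psi^*_r(y_r+\eta_r)$, i.e. $\psi^*_r(y)=\psi_r(y-\eta_r)$; this is the fourth bullet. Together with the first part this establishes the four-condition characterization, and the auxiliary identities for $0\le i\le r$ are precisely the level-by-level conclusions obtained above. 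The step I expect to require the most care is the third paragraph: confirming that the hybrid chain $\phi^*_1,\dots,\phi^*_{i-1},\phi_i$ is a legitimate optimal MacLane chain, and invoking Theorem~\ref{laststep} sharply enough that a single key-polynomial change is accounted for entirely by Theorem~\ref{lastlevel}; the accompanying verifications ($R_i(\phi^*_{i+1})=\psi_i$, $s_i(\phi^*_{i+1})=0$, and the case split on whether $\eta_i=0$) are routine but should be done carefully.
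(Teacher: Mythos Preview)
Your proposal is correct and follows essentially the same route as the paper, which simply states that the result is ``an immediate consequence of the characterization of MacLane optimal chains \cite[Prop.~3.6]{Rid} and Lemma~\ref{lastlevel}''. You have merely unpacked this one-line justification: the first three bullets via \cite[Prop.~3.6]{Rid} and Lemma~\ref{unicity}, and the fourth bullet together with the displayed identities via Theorem~\ref{laststep} (to neutralize the lower-level changes) followed by Theorem~\ref{lastlevel} (to account for the single change $\phi_i\to\phi^*_i$); the verifications you flag as needing care (legitimacy of the hybrid chain, $R_i(\phi^*_{i+1})=\psi_i$, $s_i(\phi^*_{i+1})=0$, and $\psi_i\ne y+\eta_i$ via non-stationarity when $\eta_i\ne0$) are indeed routine and your justifications for them are sound.
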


We may derive from this ``practical" characterization of the equivalence of types some more conceptual characterizations.

\begin{theorem}\label{finalchar}
For any pair of types $\ty$, $\ty^*$, the following conditions are equivalent.

\begin{enumerate}
\item $\ty\equiv\ty^*$
\item $\ord_\ty=\ord_{\ty^*}$
\item $\rep(\ty)=\rep(\ty^*)$
\end{enumerate}
\end{theorem}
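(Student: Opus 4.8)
The plan is to prove the cycle of implications $(1)\Rightarrow(2)\Rightarrow(3)\Rightarrow(1)$ (note that $(1)\Rightarrow(3)$ is in any case already Proposition~\ref{samereps}(1)). For $(1)\Rightarrow(2)$ I would first use the optimization steps of Lemma~\ref{optstep}, which preserve both the equivalence class and the function $\ord$, to reduce to the case in which $\ty,\ty^*$ are optimal; being equivalent, they then have the same order $r$ (two optimal MacLane chains of the same valuation have equal length). The order-zero case is trivial, so assume $r\ge1$. Now Proposition~\ref{charequiv} applies: writing $\eta:=\eta_r$ and $P:=P_r(g)$, it gives $\psi^*_r(y)=\psi_r(y-\eta)$, $R_r(g)=(y+\eta)^{s^*_r(g)}P$, and $R^*_r(g)(y)=(y-\eta)^{s_r(g)}P(y-\eta)$. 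Since the substitution $y\mapsto y+\eta$ is a ring automorphism of $\F_r[y]$, it turns $\ord_{\psi^*_r}R^*_r(g)$ into $\ord_{\psi_r}\!\bigl(y^{s_r(g)}P(y)\bigr)$, which equals $\ord_{\psi_r}P$ because $\psi_r\neq y$; and directly $\ord_{\psi_r}R_r(g)=\ord_{\psi_r}\!\bigl((y+\eta)^{s^*_r(g)}P\bigr)=\ord_{\psi_r}P$ because $\psi_r\neq y+\eta$ (otherwise $\psi^*_r=y$, which is forbidden). Hence $\ord_\ty(g)=\ord_{\ty^*}(g)$ for every nonzero $g$.

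For $(2)\Rightarrow(3)$ the key point is that $\rep(\ty)$ is a function of $\ord_\ty$. Combining (\ref{charrep}) with the degree bound $\deg R_r(\phi)\le s'_r(\phi)/e_r\le\deg\phi/(e_rm_r)$ (a polynomial of degree $d$ has a $\phi_r$-expansion with at most $d/m_r$ terms), one checks that $\ord_\ty(\phi)\ge1$ forces $\deg\phi\ge e_rf_rm_r$, with equality forcing $R_r(\phi)$ to be monic of degree $f_r$ and divisible by $\psi_r$, hence $R_r(\phi)=\psi_r$ and $\phi\in\rep(\ty)$; thus $\rep(\ty)$ is exactly the set of monic $\phi\in\oo[x]$ of least degree with $\ord_\ty(\phi)\ge1$. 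So $\ord_\ty=\ord_{\ty^*}$ implies $\rep(\ty)=\rep(\ty^*)$.

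The hard part is $(3)\Rightarrow(1)$. Again reduce to optimal $\ty,\ty^*$ (Lemma~\ref{optstep} and Proposition~\ref{samereps}(1)) and set $\mathcal R:=\rep(\ty)=\rep(\ty^*)$. Pick $\phi\in\mathcal R$: it is a proper key polynomial for both $\mu:=\mu_\ty$ and $\mu^*:=\mu_{\ty^*}$, of common degree $n$, with $\rrm(\phi)=\ll_\ty$ and $\rr_{\mu^*}(\phi)=\ll_{\ty^*}$; so it suffices to show $\mu=\mu^*$. By Lemma~\ref{mid=sim} and (\ref{repr}), $\mathcal R$ is precisely the $\mu$-equivalence class of $\phi$ among monic key polynomials of degree $n$, i.e. the ball
$$
\mathcal R=\bigl\{\,\phi+a\ \big|\ a\in\oo[x],\ \deg a<n,\ \mu(a)>\mu(\phi)\,\bigr\}.
$$
From this ball one reads off $\mu(\phi)$ and the values of $\mu$ on polynomials of degree $<n$ (testing, via the perturbations $\phi+\pi^k a\in\mathcal R$ and the integrality of the normalized valuation, when a given $a$ has $\mu$-value $>\mu(\phi)$); feeding this into the recurrences (\ref{recurrence}) and the condition $R_r(\phi)=\psi_r$ one reconstructs the optimal MacLane chain of $\mu$ and the ideal $\ll_\ty$ — equivalently, one verifies that $\ty,\ty^*$ satisfy the conditions of Proposition~\ref{charequiv}, whence $\ty\equiv\ty^*$.

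I expect this reconstruction to be the main obstacle: one must show that the single set $\mathcal R$ forces $r=r^*$, $\deg\phi_i=\deg\phi^*_i$, $\la_i=\la^*_i$, $\mu_i(\phi^*_i-\phi_i)\ge\mu_i(\phi_i)$ and $\psi^*_r(y)=\psi_r(y-\eta_r)$. The genuinely delicate situation is a stationary last level, in which $\deg\phi=\deg\phi_r$ yet $\phi\not\smu\phi_r$ ($R_r(\phi_r)$ is a nonzero constant whereas $R_r(\phi)=\psi_r$ has degree one): there no individual representative determines the chain, and the missing valuation data can only be extracted from the entire ball $\mathcal R$ together with the low-degree perturbations $\phi+\pi^k a$.
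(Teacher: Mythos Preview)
Your $(1)\Rightarrow(2)$ and $(2)\Rightarrow(3)$ are correct and follow the paper's argument essentially verbatim.

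For $(3)\Rightarrow(1)$ there is a genuine gap. Your ball description of $\mathcal R=\rep(\ty)$ is correct and is morally the heart of the matter, but the ``reconstruction of the MacLane chain from $\mathcal R$'' that you propose is not carried out, and you yourself flag it as the main obstacle. Note also that the perturbation test $\phi+\pi^k a\in\mathcal R$ only tells you for which integers $k$ one has $k>\mu(\phi)-\mu(a)$, hence only $\lfloor\mu(\phi)-\mu(a)\rfloor$; without already knowing $e(\mu)$ you cannot directly recover the exact rational values of $\mu$ this way, so the reconstruction is more delicate than your sketch suggests, and the stationary last-level case you single out is not obviously the only difficulty.

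The paper does not attempt any such reconstruction. It fixes a common representative $\phi\in\mathcal R$ and introduces the pseudo-valuation $\mu_\infty$ on $K[x]$ obtained by evaluation at a root of $\phi$ in $\overline{K}_v$. Then $\mu<\mu_\infty$ and $\mu^*<\mu_\infty$, and the crucial external input is that the interval $[\mu_0,\mu_\infty]$ is \emph{totally ordered}; hence after a possible swap one has $\mu\le\mu^*<\mu_\infty$. Supposing $\mu<\mu^*$, the paper shows via the sets $\Phi_{\mu,\mu^*}$, $\Phi_{\mu,\mu_\infty}$ of Lemma~\ref{inequality} and a theorem of Vaqui\'e that in fact $\mu^*=[\mu;\phi,\nu]$ for some $\nu>0$; properness of $\phi$ as a key polynomial for $\mu^*$ then forces $e_{\mu^*}=1$, i.e.\ $\nu\in\Gamma(\mu)$. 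Lemma~\ref{values} now produces $a$ with $\deg a<\deg\phi$ and $\mu(a)=\mu(\phi)+\nu=\mu^*(\phi)$. The polynomial $\varphi=\phi+a$ lies in $\rep(\ty)$ (since $\mu(a)>\mu(\phi)$ gives $\varphi\smu\phi$) but not in $\rep(\ty^*)$ (since $\mu^*(a)=\mu(a)=\mu^*(\phi)$ gives $\varphi\not\sim_{\mu^*}\phi$), contradicting the hypothesis. So rather than rebuilding $\mu$ from the ball, the paper directly exhibits an element of $\rep(\ty)\setminus\rep(\ty^*)$.
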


\begin{proof}
Let us prove that (1) implies (2).
By Lemma \ref{optstep}, the function $\ord_\ty$ is preserved by the optimization steps. Hence, we may assume that the types are optimal.  

Take $g\in K[x]$ a nonzero polynomial.
For two equivalent types of order $r=0$ we have $R_0=R^*_0$ and $\psi_0=\psi^*_0$; thus, 
$$\ord_\ty(g)=\ord_{\psi_0}(R_0(g))=\ord_{\psi^*_0}(R^*_0(g))=\ord_{\ty^*}(g).$$

If $r>0$, we have $\psi_r\ne y$ and $\psi^*_r\ne y$. By Proposition \ref{charequiv}, $\psi^*_r(y)=\psi_r(y-\eta_r)$, and this implies 
$\psi_r\ne y+\eta_r$, $\psi^*_r\ne y-\eta_r$. Hence,  Proposition \ref{charequiv} shows that
\begin{equation*}
\begin{split}
\ord_\ty(g)=&\ord_{\psi_r}R_r(g)=\ord_{\psi_r}P_r(g)=\ord_{\psi^*_r}P_r(g)(y-\eta_r)\\=&\ord_{\psi^*_r}R^*_r(g)=\ord_{\ty^*}(g).
\end{split}
\end{equation*}

On the other hand, (\ref{charrep}) characterizes the representatives of a type $\ty$ as monic polynomials $\phi\in\oo[x]$ with minimal degree satisfying $\ord_\ty(\phi)=1$; thus, (2) implies (3).

Finally, let us prove that (3) implies (1). Let us denote $\mu=\mu_\ty$, $\mu^*=\mu_{\ty^*}$. It suffices to show that $\mu=\mu^*$, because then any common representative $\phi\in\rep(\ty)\cap\rep(\ty^*)$ leads to $\ll_\ty=\rr_{\mu}(\phi)=\rr_{\mu^*}(\phi)=\ll_{\ty^*}$, so that $\ty$ and $\ty^*$ are equivalent.

Take $\phi\in\rep(\ty)\cap\rep(\ty^*)$ a common representative of $\ty$ and $\ty^*$. 
Let $\mu_{\infty}$ be the pseudo-valuation on $K[x]$ obtained as the composition:
$$
\mu_{\infty}\colon K[x] \hookrightarrow K_v[x]\lra K_\phi\stackrel{v}\lra \Q\cup\{\infty\},
$$the second mapping being determined by $x\mapsto \t$, a root of $\phi$ in $\overline{K}_v$. By \cite[Prop. 1.9]{Rid}, we have $\mu<\mu_\infty$,  $\mu^*<\mu_\infty$, and for any nonzero $g\in K[x]$:
\begin{equation}\label{a}
\mu(g)<\mu_\infty(g)\sii \phi\mmu g,\quad
\mu^*(g)<\mu_\infty(g)\sii \phi\mid_{\mu^*} g.
\end{equation}

Since the interval $[\mu_0,\mu_\infty]$ is totally ordered \cite[Thm. 7.5]{Rid}, after exchanging the role of $\mu$ and $\mu^*$ if necessary, we must have 
$$
\mu\le \mu^*<\mu_\infty.
$$ 
The proof will be complete if we show that the conditions $\mu<\mu^*<\mu_\infty$ and $\rep(\ty^*)=\rep(\ty)$ lead to a contradiction.

Let $\Phi_{\mu,\mu_\infty}$ be the set of all monic polynomials $\varphi\in K[x]$ of minimal degree satisfying $\mu(\varphi)<\mu_\infty(\varphi)$. Let $\deg \Phi_{\mu,\mu_\infty}$ be the common degree of all polynomials in this set.

We claim that $\phi$ belongs to $\Phi_{\mu,\mu_\infty}$. In fact, the inequality $\mu(\phi)<\mu_\infty(\phi)=\infty$ is obvious. On the other hand, for any $a\in K[x]$ of degree less than $\deg\phi$, the $\mu$-minimality of $\phi$ implies that $\phi\nmid_\mu a$; by (\ref{a}), we deduce that $\mu(a)=\mu_\infty(a)$. 

By Lemma \ref{inequality} below, there is a unique maximal ideal $\ll\in\mx(\Delta(\mu))$ such that  
$$
\Phi_{\mu,\mu_\infty}=\left\{\varphi \in\kpm\mid \rr_{\mu}(\varphi)=\ll\right\}.
$$
Since $\phi\in\Phi_{\mu,\mu_\infty}$ and $\rr_\mu(\phi)=\ll_\ty$, we see that
$$
\Phi_{\mu,\mu_\infty}=\left\{\varphi \in\kpm\mid \rr_{\mu}(\varphi)=\ll_\ty\right\}=\rep(\ty)=\left\{\varphi \in \kpm\mid \varphi\smu\phi\right\}.
$$
An analogous argument shows that
$$
\Phi_{\mu^*,\mu_\infty}=\rep(\ty^*)=\left\{\varphi \in \op{KP}(\mu^*)\mid \varphi\sim_{\mu^*}\phi\right\}.
$$

Also, Lemma \ref{inequality} shows that $\Phi_{\mu,\mu^*}$ is one of the $\mu$-equivalence classes in $\kpm$. Hence, if we show that  $\Phi_{\mu,\mu^*}\subset \Phi_{\mu,\mu_\infty}$, these two sets must coincide. 
In fact, a polynomial $\varphi\in\Phi_{\mu,\mu^*}$ is a key polynomial for $\mu$ with $\mu(\varphi)<\mu^*(\varphi)\le \mu_\infty(\varphi)$. By (\ref{a}), we have $\phi\mmu\varphi$, which implies $\rr_\mu(\phi)\supset\rr_\mu(\varphi)$; since $\rr_\mu(\phi)$, $\rr_\mu(\varphi)$ are maximal ideals of $\Delta(\mu)$, they coincide. Thus, $\phi\smu\varphi$, so that $\varphi$ belongs to $\Phi_{\mu,\mu_\infty}$. 

In particular, $\phi$ belongs to $\Phi_{\mu,\mu^*}=\Phi_{\mu,\mu_\infty}$. Consider the positive rational number $\nu=\mu^*(\phi)-\mu(\phi)$. By \cite[Thm. 1.15]{Vaq}, the augmented valuation 
$\mu'=[\mu;\phi,\nu]$ satisfies $\mu<\mu'\le\mu^*$ and $\mu'(\phi)=\mu(\phi)+\nu=\mu^*(\phi)$. 

We claim that $\mu'=\mu^*$. In fact, if $ \mu'<\mu^*$, then we could replace $\mu$ by $\mu'$ in the above arguments to deduce $\Phi_{\mu',\mu^*}=\Phi_{\mu',\mu_\infty}$. Therefore, 
$$
\deg\phi=\deg\Phi_{\mu,\mu^*}\le\deg\Phi_{\mu',\mu^*}=\deg\Phi_{\mu',\mu_\infty}\le\deg \Phi_{\mu^*,\mu_\infty}=\deg\phi.
$$ 
We deduce $\deg\Phi_{\mu,\mu^*}=\deg\Phi_{\mu',\mu^*}$, and this leads to $\Phi_{\mu,\mu^*}\supset\Phi_{\mu',\mu^*}$, because $\mu'(\varphi)<\mu^*(\varphi)$  implies obviously 
 $\mu(\varphi)<\mu^*(\varphi)$. Similarly, $\deg\Phi_{\mu',\mu_\infty}=\deg \Phi_{\mu^*,\mu_\infty}$, leading to $\Phi_{\mu',\mu_\infty}\supset\Phi_{\mu^*,\mu_\infty}$. Hence,
$$
\rep(\ty^*)=\Phi_{\mu^*,\mu_\infty}\subset\Phi_{\mu',\mu_\infty}=\Phi_{\mu',\mu^*}\subset \Phi_{\mu,\mu^*}=\Phi_{\mu,\mu_\infty}=\rep(\ty).
$$
The hypothesis $\rep(\ty)=\rep(\ty^*)$ implies $\Phi_{\mu',\mu^*}=\Phi_{\mu,\mu^*}$, which is impossible, because $\phi$ does not belong to $\Phi_{\mu',\mu^*}$. Therefore, $\mu^*=\mu'=[\mu;\phi,\nu]$. 

Since $\phi$ is a proper key polynomial for $\mu$, there exists a MacLane chain of $\mu^*$ such that $\phi,\nu$ are the augmentation data of the last level. Hence, $m_{\mu^*}= \deg \phi$ and $e_{\mu^*}$ is the least positive integer such that $e_{\mu^*} \nu\in\Gamma(\mu)$. Since $\phi$ is a proper key polynomial for $\mu^*$, (\ref{em}) shows that $\deg\phi\ge e_{\mu^*}m_{\mu^*}=e_{\mu^*}\deg\phi$. Thus, $e_{\mu^*}=1$, or equivalently, $\nu \in\Gamma(\mu)$.

By Lemma \ref{values} below, there exists $a\in K[x]$ of degree less than $\deg\phi$, such that
$\mu(a)=\mu(\phi)+\nu$. Take $\varphi=\phi+a$. Since $\varphi\smu \phi$ and $\deg\varphi=\deg\phi$, Lemma \ref{mid=sim} shows that $\varphi$ is a key polynomial for $\mu$, and $\varphi\in\rep(\ty)$ by (\ref{repr}). However, $\varphi\not\sim_{\mu^*}\phi$, because $\mu^*(a)=\mu(a)=\mu^*(\phi)$ is not greater than $\mu^*(\phi)$. Hence, $\varphi \not\in\rep(\ty^*)$, and this contradicts our hypothesis.
\end{proof}

We recall that a pseudo-valuation on $K[x]$ is a map $K[x]\to \Q\cup\{\infty\}$ having the same properties as a valuation, except for the fact that the pre-image of $\infty$ is a prime ideal which is not necessarily zero.

\begin{lemma}\label{inequality}
Let $\mu_\infty$ be a pseudovaluation on $K[x]$, and let $\mu$ be an inductive valuation such that $\mu<\mu_\infty$. Let $\Phi_{\mu,\mu_\infty}$ be the set of all monic polynomials $\phi\in K[x]$ of minimal degree satisfying $\mu(\phi)<\mu_\infty(\phi)$. Then, there is a unique $\ll\in\mx(\Delta(\mu))$ such that  
$$
\Phi_{\mu,\mu_\infty}=\left\{\phi \in\kpm\mid \rr_{\mu}(\phi)=\ll\right\}.
$$
\end{lemma}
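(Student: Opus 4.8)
The plan is to prove, in this order: (i) that a monic polynomial of minimal degree $m:=\deg\Phi_{\mu,\mu_\infty}$ satisfying $\mu(\phi)<\mu_\infty(\phi)$ is automatically a key polynomial for $\mu$; (ii) that for such a $\phi$ one has the dichotomy $\phi\mmu g\iff\mu(g)<\mu_\infty(g)$ for every nonzero $g\in K[x]$; and then (iii) read off the statement by identifying $\Phi_{\mu,\mu_\infty}$ with a single fibre of $\rr_\mu$. I will use two trivial consequences of the minimality of $m$: the set $\Phi_{\mu,\mu_\infty}$ is nonempty (scale any minimal‑degree polynomial with a jump to make it monic, which is legitimate since $\mu$ and $\mu_\infty$ both restrict to $v$ on $K^*$), and every nonzero $h$ with $\deg h<m$ has $\mu(h)=\mu_\infty(h)$.

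For (i), fix $\phi\in\Phi_{\mu,\mu_\infty}$. To see that $\phi$ is $\mu$‑minimal, suppose some nonzero $h$ with $\deg h<m$ had $\phi\mmu h$; then $\hm(h)=\hm(\phi)\hm(g)$ for a nonzero $g$ with $\mu(g)=\mu(h)-\mu(\phi)$, whence $\mu(h-\phi g)>\mu(h)$. Since $\deg h<m$ we have $\mu_\infty(h)=\mu(h)$, while $\mu_\infty(h-\phi g)\ge\mu(h-\phi g)>\mu(h)$; comparing $\mu_\infty$‑values in $h=(h-\phi g)+\phi g$ forces $\mu_\infty(\phi g)=\mu_\infty(h)=\mu(h)$, i.e. $\bigl(\mu_\infty(\phi)-\mu(\phi)\bigr)+\bigl(\mu_\infty(g)-\mu(g)\bigr)=0$, which is impossible because the first term is positive and the second non‑negative. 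Granting $\mu$‑minimality, I would prove the dichotomy (ii): if $\phi\mmu g$ then writing $\hm(g)=\hm(\phi)\hm(g')$ with $g'\ne0$ one gets $\mu_\infty(g)\ge\min\{\mu_\infty(\phi g'),\mu_\infty(g-\phi g')\}$, and both arguments strictly exceed $\mu(g)$ (the first since $\mu_\infty(\phi)>\mu(\phi)$, the second since $\mu(g-\phi g')>\mu(g)$), so $\mu(g)<\mu_\infty(g)$; conversely, if $\phi\nmid_\mu g$ then the constant term $a_0$ of the $\phi$‑expansion $g=\sum_s a_s\phi^s$ is nonzero with $\mu(a_0)=\mu(g)$ (here $\mu$‑minimality enters, via $\mu(g)=\min_s\mu(a_s\phi^s)$ and the description of $\hm(g)$ modulo $\hm(\phi)$), and since $\deg a_0<m$ we have $\mu_\infty(a_0)=\mu(g)$ whereas $\mu_\infty(a_s\phi^s)=\mu(a_s\phi^s)+s\bigl(\mu_\infty(\phi)-\mu(\phi)\bigr)>\mu(g)$ for $s\ge1$, so the $a_0$‑term dominates and $\mu_\infty(g)=\mu(g)$. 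The dichotomy says precisely that $\hm(\phi)\ggm=\{\hm(g):\mu(g)<\mu_\infty(g)\}\cup\{0\}$; this is a nonzero homogeneous ideal, proper because it omits $\hm(1)$, and prime because $\mu_\infty(g_1g_2)>\mu(g_1g_2)$ forces $\mu_\infty(g_i)>\mu(g_i)$ for some $i$. Hence $\phi$ is $\mu$‑irreducible, so a key polynomial for $\mu$.

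It remains to identify the fibre (step (iii)). Put $\ll:=\rr_\mu(\phi)\in\mx(\Delta(\mu))$, maximal by Proposition \ref{sameideal}. Any $\phi,\phi'\in\Phi_{\mu,\mu_\infty}$ differ by $a:=\phi-\phi'$ of degree $<m$, hence $\mu(a)=\mu_\infty(a)$; a short case analysis on $\mu(\phi)$ versus $\mu(\phi')$ (using that $\mu_\infty(\phi),\mu_\infty(\phi')$ strictly exceed the corresponding $\mu$‑values) forces $\mu(\phi)=\mu(\phi')$ and then $\mu(a)>\mu(\phi)$, so $\phi\smu\phi'$ and $\rr_\mu(\phi')=\ll$ by (\ref{repr}); in particular $\ll$ does not depend on the representative and $\Phi_{\mu,\mu_\infty}\subseteq\{\varphi\in\kpm:\rr_\mu(\varphi)=\ll\}$. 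For the reverse inclusion, take $\varphi\in\kpm$ with $\rr_\mu(\varphi)=\ll$; by (\ref{repr}) $\varphi\smu\phi$, and two $\mu$‑equivalent key polynomials have equal degree (an equivalence with a strictly longer polynomial contradicts its $\mu$‑minimality, via Lemma \ref{mid=sim}), so $\deg\varphi=m$; writing $\varphi=\phi+a$ with $\deg a<m$ and $\mu(a)>\mu(\phi)=\mu(\varphi)$ gives $\mu_\infty(a)=\mu(a)>\mu(\varphi)$ and $\mu_\infty(\phi)>\mu(\varphi)$, hence $\mu_\infty(\varphi)\ge\min\{\mu_\infty(\phi),\mu_\infty(a)\}>\mu(\varphi)$ and $\varphi\in\Phi_{\mu,\mu_\infty}$. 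Uniqueness of $\ll$ is immediate since $\Phi_{\mu,\mu_\infty}\ne\emptyset$.

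The step I expect to need the most care is the ``jump $\Rightarrow$ $\phi$‑divisibility'' half of the dichotomy in (ii): one cannot simply evaluate $\mu_\infty$ term by term on a $\phi$‑expansion, because $\mu_\infty$ is only a pseudo‑valuation and $\phi$ need not be $\mu_\infty$‑minimal, so the argument must isolate the $a_0$‑term and rely only on the safe inequality $\mu_\infty(g)\ge\min_s\mu_\infty(a_s\phi^s)$ together with the $\mu$‑minimality of $\phi$. Everything else reduces to bookkeeping with Lemma \ref{mid=sim}, (\ref{repr}) and Proposition \ref{sameideal}.
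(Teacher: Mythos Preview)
Your proof is correct and follows the same logical skeleton as the paper's: first establish that every $\phi\in\Phi_{\mu,\mu_\infty}$ is a key polynomial for $\mu$ satisfying the dichotomy $\phi\mmu g\iff\mu(g)<\mu_\infty(g)$, then identify $\Phi_{\mu,\mu_\infty}$ with a single $\mu$-equivalence class in $\kpm$ and invoke (\ref{repr}). The difference is that the paper obtains steps (i) and (ii) in one line by citing \cite[Thm.~1.15]{Vaq}, whereas you reprove these facts from scratch; your argument is thus a self-contained replacement for that citation, and all the details you give (the $\mu$-minimality argument, the $a_0$-isolation in the $\phi$-expansion, primality of $\hm(\phi)\ggm$ via the jump condition) are sound.

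One small remark on economy: once you have the dichotomy, your case analysis in step (iii) is unnecessary. For the inclusion $\Phi_{\mu,\mu_\infty}\subseteq\{\varphi\in\kpm:\varphi\smu\phi\}$, any $\phi'\in\Phi_{\mu,\mu_\infty}$ has $\mu(\phi')<\mu_\infty(\phi')$, hence $\phi\mmu\phi'$ by the dichotomy, and Lemma~\ref{mid=sim} with $\deg\phi'=\deg\phi$ gives $\phi'\smu\phi$ and $\phi'\in\kpm$ directly. Likewise the reverse inclusion: $\varphi\smu\phi$ gives $\phi\mmu\varphi$, hence $\mu(\varphi)<\mu_\infty(\varphi)$, and equal degrees follow from mutual $\mu$-minimality as you note. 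This is exactly how the paper packages step (iii), so your direct verifications are redundant but not wrong.
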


\begin{proof}
By \cite[Thm. 1.15]{Vaq}, any $\phi\in\Phi_{\mu,\mu_\infty}$ is a key polynomial for $\mu$ such that
$$
\phi\mmu g \sii \mu(g)<\mu_\infty(g),
$$
for any nonzero $g\in K[x]$. For any fixed $\phi\in\Phi_{\mu,\mu_\infty}$, Lemma \ref{mid=sim} shows that
$$
\Phi_{\mu,\mu_\infty}=\left\{\varphi \in \kpm\mid \varphi\smu\phi\right\}
$$
is the $\mu$-equivalence class of $\phi$ inside $\kpm$.
This ends the proof because, as seen in (\ref{repr}), the fibers of the map $\rr_\mu\colon \kpm\to \mx(\Delta(\mu))$
are the $\mu$-equivalence classes in $\kpm$.
\end{proof}

\begin{lemma}\label{values}
The group of values $\Gamma(\mu)$ of an inductive valuation $\mu$ satisfies
$$
\Gamma(\mu)=\{\mu(a)\mid a\in K[x],\ \deg a<e_\mu m_\mu\}.
$$ 
\end{lemma}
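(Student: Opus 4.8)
The plan is to fix a MacLane chain (\ref{depth}) of $\mu$ and prove, by induction on $i$, the non-obvious inclusion $\Gamma(\mu_i)\subseteq\{\mu_i(a)\mid a\in K[x]\setminus\{0\},\ \deg a<e_im_i\}$ for $0\le i\le r$; the case $i=r$ then gives the lemma, because $e_\mu=e_r$, $m_\mu=m_r$, and the reverse inclusion is trivial (every $\mu(a)$ with $a\neq0$ is a finite value). The base case $i=0$ is immediate: here $\phi_0=x$ and $\mu_{-1}=\mu_0$, so $e_0=m_0=1$ and the assertion reads $\Gamma(\mu_0)=\{v(c)\mid c\in K^*\}=\Z$, which is exactly the normalization of $v$.

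For the step from $i-1$ to $i$ (with $1\le i\le r$), I would write $\mu_i=[\mu_{i-1};\phi_i,\nu_i]$ and recall from Section \ref{secML} that $\Gamma(\mu_i)$ is generated by $\Gamma(\mu_{i-1})$ and $\nu_i$ and that $e_i$ is the least positive integer with $e_i\Gamma(\mu_i)\subseteq\Gamma(\mu_{i-1})$; consequently $\Gamma(\mu_i)/\Gamma(\mu_{i-1})$ is cyclic of order $e_i$, generated by the class of $\nu_i$, so every $\beta\in\Gamma(\mu_i)$ decomposes as $\beta=\gamma+j\nu_i$ with $\gamma\in\Gamma(\mu_{i-1})$ and $0\le j<e_i$. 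Since $\mu_{i-1}(\phi_i)\in\Gamma(\mu_{i-1})$, the element $\gamma-j\,\mu_{i-1}(\phi_i)$ again lies in $\Gamma(\mu_{i-1})$, and the inductive hypothesis provides a nonzero $b\in K[x]$ with $\deg b<e_{i-1}m_{i-1}$ and $\mu_{i-1}(b)=\gamma-j\,\mu_{i-1}(\phi_i)$. I would then take $a:=b\,\phi_i^{\,j}$.

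Finally I would check that $\mu_i(a)=\beta$ and $\deg a<e_im_i$. Because $\deg b<e_{i-1}m_{i-1}\le e_{i-1}f_{i-1}m_{i-1}=m_i=\deg\phi_i$ and $\phi_i$, being a key polynomial for $\mu_{i-1}$, is $\mu_{i-1}$-minimal, one has $\phi_i\nmid_{\mu_{i-1}}b$, so $\mu_i(b)=\mu_{i-1}(b)$ by Proposition \ref{extension}(2); combining this with $\mu_i(\phi_i)=\mu_{i-1}(\phi_i)+\nu_i$ and the multiplicativity of $\mu_i$ gives $\mu_i(a)=\mu_{i-1}(b)+j\,\mu_{i-1}(\phi_i)+j\nu_i=\gamma+j\nu_i=\beta$. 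The degree bound follows from $\deg a=\deg b+j\,m_i<e_{i-1}m_{i-1}+j\,m_i\le m_i+j\,m_i\le e_im_i$, using $e_{i-1}m_{i-1}\le m_i$ (from the recurrences (\ref{recurrence})) and $j\le e_i-1$, and this closes the induction. There is no real obstacle in this argument: the only points needing care are the decomposition of $\Gamma(\mu_i)$ modulo $\Gamma(\mu_{i-1})$ via the ramification index $e_i$, and the elementary inequality $e_{i-1}m_{i-1}\le m_i$ coming from the recurrence relations.
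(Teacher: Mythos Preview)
Your argument is correct. The inductive step is clean: the decomposition $\beta=\gamma+j\nu_i$ with $0\le j<e_i$ follows from the description of $\Gamma(\mu_i)$ as generated by $\Gamma(\mu_{i-1})$ and $\nu_i$ together with the definition of $e_i$; the use of $\mu_{i-1}$-minimality of $\phi_i$ to get $\mu_i(b)=\mu_{i-1}(b)$ via Proposition~\ref{extension}(2) is the right tool; and the degree bound $\deg a<e_im_i$ is justified exactly as you indicate, using $m_i=e_{i-1}f_{i-1}m_{i-1}\ge e_{i-1}m_{i-1}$ from (\ref{recurrence}) and $j\le e_i-1$.

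The paper takes a different and much shorter route: it does not argue inductively along the chain but instead observes that, by (\ref{em}), $\mu$ admits a proper key polynomial $\phi$ of degree $e_\mu m_\mu$, extends the given MacLane chain by one step $\mu\stackrel{\phi,\nu}{\longrightarrow}\mu'$, and then simply invokes \cite[Lem.~3.2]{Rid}, where the stated identity on $\Gamma(\mu)$ is already proved. So the paper's proof is essentially a reduction to an external reference, while yours is a self-contained elementary argument using only the structural facts about MacLane chains recalled in Section~\ref{secML}. Your approach has the advantage of being independent of \cite{Rid} at this point; the paper's approach is terser and emphasizes that the result is not new here.
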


\begin{proof}
By (\ref{em}), $\mu$ admits a proper key polynomial $\phi$ of degree $e_\mu m_\mu$.
Consider a MacLane chain of $\mu$ as in (\ref{depth}) such that $\phi\nmid_\mu\phi_r$. Let $\mu'=[\mu;\phi,\nu]$ be any augmentation of $\mu$ determined by the choice of an arbitrary positive rational number $\nu$. The MacLane chain may be extended to a MacLane chain of length $r+1$ of $\mu'$ with last step $\mu\ \stackrel{\phi,\nu}\lra\ \mu'$.

Now, the claimed identity on $\Gamma(\mu)$ is proved in \cite[Lem. 3.2]{Rid}.
\end{proof}

\section{An example}\label{secExample}

Let $p$ be an odd prime number. Denote by $v$ the $p$-adic valuation on $\Q_p$ and let $\F=\Z/p\Z$. Consider the polynomial:
$$
f=x^4-2(p+p^2-p^3)\,x^2+p^2+2p^3-p^4-2p^5+p^6+p^8\in\Z[x].
$$

Let us apply the OM factorization method to compute the prime factors of $f$ in $\Z_p[x]$.

Clearly, $R_0(f)=\overline{f(y)}=y^4$. Thus, the type of order zero, $\ty_0=(y)$, divides all prime factors of $f$, and we have $\ord_{\ty_0}(f)=4$. We choose $\phi_1=x$ as a representative of $\ty_0$.

Let $\mu_0$ be the Gauss valuation extending $v$ to $\Q_p[x]$, introduced in section \ref{subsecGradedAlg}. The Newton polygon $N_{\mu_0,x}(f)$ is one-sided of length $4$ and slope $-1/2$. 

For the computation of residual polynomials we use the explicit recurrent method described in \cite[Sec. 3.1]{gen}. we have:
$$
R_{\mu_0,x,1/2}(f)=y^2-2y+1=(y-1)^2.
$$
Thus, we get a unique type of order one dividing all prime factors of $f$:
$$\ty_1=(y;(x,1/2,y-1)),$$but we now have $\ord_{\ty_1}(f)=2$. Hence, either $f$ is irreducible over $\Z_p[x]$, or it is the product $f=FG$ of two quadratic polynomials with $\ord_{\ty_1}(F)=\ord_{\ty_1}(G)=1$.

Take $\phi_2=x^2-p$ as a representative of $\ty_1$. The $\phi_2$-expansion of $f$ is:
\begin{equation}\label{exp2}
f=\phi_2^2-2(p^2-p^3)\,\phi_2+p^4-2p^5+p^6+p^8.
\end{equation}

The augmented valuation $\mu_1=[\mu_0;x,1/2]$ on $\Q_p[x]$ attached to $\ty_1$ acts on $\Q_p[x]$ as follows:
$$\mu_1\left(\sum_{0\le s}a_sx^s\right)=\mn\left\{\mu_0(a_s)+s/2\right\}=\mn\left\{v(a_s)+s/2\right\}.
$$
Since $\mu_1(\phi_2)=1$, the points $(s,\mu_1(a_s\phi_2^s))\in\R^2$ associated with the $ \phi_2$-expansion (\ref{exp2}) are $(2,2)$, $(1,3)$, $(0,4)$. Thus, $N_{\mu_1,\phi_2}(f)$ is one-sided of length $2$ and slope $-1$. The corresponding residual polynomial is:
$$
R_{\mu_1,\phi_2,1}(f)=y^2-2y+1=(y-1)^2.
$$
Again, we get only one type of order two dividing all prime factors of $f$:
$$\ty_2=(y;(x,1/2,y-1);(\phi_2,1,y-1)),$$still satisfying $\ord_{\ty_2}(f)=2$. Let us take $\phi_3=\phi_2-p^2=x^2-p-p^2$ as the simplest representative of $\ty_2$. The $\phi_3$-expansion of $f$ is:
\begin{equation}\label{exp3}
f=\phi_3^2+2p^3\,\phi_3+p^6+p^8.
\end{equation}

The non-normalized valuation $\mu_2=[\mu_1;\phi_2,1]$ attached to $\ty_2$ acts on $\Q_p[x]$ as follows:
$$\mu_2\left(\sum_{0\le s}a_s\phi_2^s\right)=\mn\left\{\mu_1(a_s)+2s\right\}.
$$
Since $\mu_2(\phi_3)=2$, the points in $\R^2$ associated with the $ \phi_3$-expansion (\ref{exp3}) are $(2,4)$, $(1,5)$, $(0,6)$. Thus, $N_{\mu_2,\phi_3}(f)$ is again one-sided of length $2$ and slope $-1$. The corresponding residual polynomial is:
$$
R_{\mu_2,\phi_3,1}(f)=y^2+2y+1=(y+1)^2.
$$
Again, we get only one type of order three dividing all prime factors of $f$:
$$\ty_3=(y;(x,1/2,y-1);(\phi_2,1,y-1);(\phi_3,1,y+1)),$$still satisfying $\ord_{\ty_3}(f)=2$. Let us take $\phi_4=\phi_3+p^3=x^2-p-p^2+p^3$ as a representative of $\ty_3$. The $\phi_4$-expansion of $f$ is:
\begin{equation}\label{exp4}
f=\phi_4^2+p^8.
\end{equation}

The valuation $\mu_3=[\mu_2;\phi_3,1]$ attached to $\ty_3$ acts on $\Q_p[x]$ as follows:
$$\mu_3\left(\sum_{0\le s}a_s\phi_3^s\right)=\mn\left\{\mu_2(a_s)+3s\right\}.
$$
Since $\mu_3(\phi_4)=3$, the points in $\R^2$ associated with the $ \phi_4$-expansion (\ref{exp4}) are $(2,6),\, (0,8)$. Thus, $N_{\mu_3,\phi_4}(f)$ is again one-sided of length $2$ and slope $-1$. The corresponding residual polynomial is:
$$
R_{\mu_3,\phi_4,1}(f)=y^2+1.
$$
The factorization of this polynomial in $\F[y]$ depends on the class of $p$ modulo $4$. The method proceeds in a different way according to this class.\newpage

\noindent{\bf Case $p\equiv -1\md{4}$. }\medskip

The polynomial $y^2+1$ is irreducible in $\F[y]$ and we get a unique type of order four dividing all prime factors of $f$:
$$\ty_4=(y;(x,1/2,y-1);(\phi_2,1,y-1);(\phi_3,1,y+1);(\phi_4,1,y^2+1)),$$for which $\ord_{\ty_4}(f)=1$. This implies that $f$ is irreducible in $\Z_p[x]$. Also, if $L/\Q_p$ is the finite extension of $\Q_p$ determined by $f$, we have
$$
e(L/\Q_p)=e_1e_2e_3e_4=2,\quad f(L/\Q_p)=f_0f_1f_2f_3f_4=2,
$$
where $e_i$ are the lowest term denominators of the slopes of $\ty_4$ and $f_i$ are the degrees of the $\psi$-polynomials of all levels of $\ty_4$.

However, the information about $f$ we have been collecting in the type $\ty_4$ is not intrinsic. It depends on the choices of representatives for the types
$\ty_0$, $\ty_1$, $\ty_2$, $\ty_3$.
Let us consider the following optimal type equivalent to $\ty_4$:
$$
\ty=(y;(x,1/2,y-1);(\phi_4,3,y^2+1)),
$$
obtained by an iterative application of Lemma \ref{optstep}.

By Theorem \ref{finalchar}, $\ord_\ty(f)=1$ and $f$ is a representative of $\ty$. Moreover, since the type $\ty$ is strongly optimal, the equivalence class of $\ty$ is the canonical class attached to the Okutsu class of $f$ through the mapping of (\ref{main}). 

Therefore, the data supported by $\ty$ are intrinsic data of $f$. For instance, the \emph{Okutsu depth} of $f$ is two and $[x,\,\phi_4]$ is an \emph{Okutsu frame} of $f$ \cite{okutsu}. This means that
$$
\begin{array}{rl}
\frac 12 = v(\t)\ge v(h(\t)),&\mbox{ for all monic }h\in\Z_p[x]\mbox{ with }\deg h<2,\\
3 = v(\phi_4(\t))\ge v(h(\t)),&\mbox{ for all monic }h\in\Z_p[x]\mbox{ with }\deg h<4,
\end{array}
$$
where $\t$ is a root of $f$ in $\overline{\Q}_p$. In particular, the slopes $1/2$ and $3$ are intrinsic data of $f$.\medskip

\noindent{\bf Case $p\equiv 1\md{4}$. }\medskip

The polynomial $y^2+1$ splits as $(y-i)(y+i)$ in $\F[y]$, where $i\in\F$ satisfies $i^2=-1$. We get then two inequivalent types dividing $f$:
$$
\begin{array}{c}
\ty_4=(y;(x,1/2,y-1);(\phi_2,1,y-1);(\phi_3,1,y+1);(\phi_4,1,y-i)),\\
\ty'_4=(y;(x,1/2,y-1);(\phi_2,1,y-1);(\phi_3,1,y+1);(\phi_4,1,y+i)),
\end{array}
$$
with $\ord_{\ty_4}(f)=\ord_{\ty'_4}(f)=1$. This implies that $f=FF'$ splits in $\Z_p[x]$ into the product of two monic quadratic irreducible polynomials $F$, $F'$ such that
$$
\ord_{\ty_4}(F)=1,\ \ord_{\ty'_4}(F)=0;\quad \ord_{\ty_4}(F')=0,\ \ord_{\ty'_4}(F')=1. 
$$
If $L/\Q_p$, $L'/\Q_p$ are the quadratic extensions of $\Q_p$ determined by these prime factors, we have
$$
e(L/\Q_p)=e(L'/\Q_p)=2,\quad f(L/\Q_p)=f(L'/\Q_p)=1.
$$

Also, by taking representatives of these types we obtain concrete Okutsu approximations to the unknown factors $F$, $F'$:
\begin{equation}\label{gg}
\as{1.2}
\begin{array}{l}
G:=\phi_4-ip^4=x^2-p-p^2+p^3-ip^4\approx F,\\
G':=\phi_4+ip^4=x^2-p-p^2+p^3+ip^4\approx F',
\end{array}
\end{equation}
where now $i\in\Z$ is an arbitrary lifting of $i\in\F$.

Again, the information about $F$, $F'$ contained in the types $\ty_4$, $\ty'_4$, respectively, is not intrinsic. Consider the optimal types equivalent to $\ty_4$, $\ty'_4$, respectively:
$$
\ty=(y;(x,1/2,y-1);(\phi_4,3,y-i)),\qquad 
\ty'=(y;(x,1/2,y-1);(\phi_4,3,y+i)),
$$
obtained by an iterative application of Lemma \ref{optstep}.

By Theorem \ref{finalchar}, these types satisfiy
$$
\ord_{\ty}(F)=1,\ \ord_{\ty'}(F)=0;\quad \ord_{\ty}(F')=0,\ \ord_{\ty'}(F')=1,
$$
and the polynomials $G$, $G'$ of (\ref{gg}) are representatives of $\ty$, $\ty'$, respectively.

Caution! The types $\ty$ and $\ty'$ are optimal, but not strongly optimal. Hence, the information contained in the last level of $\ty$, $\ty'$ is not intrinsic either. In this case, the equivalence class of strongly optimal types associated with the Okutsu class of $F$ is the class of the type $\ty_1$. In fact, by Lemma \ref{optstep}, $F$ and $F'$ are representatives of $\ty_1$. This means that the prime polynomials $F$, $F'$ both correspond to the same strongly optimal type $\ty_1$ by the mapping of (\ref{main}); hence, these polynomials are Okutsu equivalent. Actually, if we denote by $[g]$ the Okutsu class of a prime polynomial $g\in\oo_v[x]$, we have:
$$
[x^2-p]=[\phi_4]=[F]=[F']=[G]=[G'],
$$
and all these polynomials determine the same quadratic extension of $\Q_p$. In general, the extensions determined by two Okutsu equivalent prime polynomials in $\Z_p[x]$ have isomorphic maximal tamely ramified subextensions \cite{okutsu}.  

The type $\ty_1$ contains intrinsic information about all these Okutsu equivalent prime polynomials in $\Z_p[x]$. They all have Okutsu depth one, the family $[x]$ is an Okutsu frame and the slope $1/2$ has the following intrinsic meaning:
$$
\frac 12 = v(\t)\ge v(h(\t)),\mbox{ for all monic }h\in\Z_p[x]\mbox{ with }\deg h<2.
$$

This situation enlightens an important feature of the OM factorization algorithm. When some prime factors of the input polynomial are in the same Okutsu class, the algorithm computes first the common strongly optimal (equivalence class of the) type attached to them, but then it must work further to enlarge this type with an adequate last level which enables one to distinguish the different prime factors.

\end{document}